\documentclass[12pt]{amsart}
\numberwithin{equation}{section}
\usepackage{orcidlink}
\usepackage{graphicx,color}
\usepackage{xcolor}
\usepackage{amssymb}
\def\cb{{\mathcal B}}

\def\cd{{\mathcal D}}

\def\cf{{\mathcal F}}

\def\ch{{\mathcal H}}

\def\co{{\mathcal O}}
\def\cp{{\mathcal P}}

\def\cs{{\mathcal S}}

\def\cu{{\mathcal U}}

\def\ga{{\mathfrak A}} 
 
\def\gc{{\mathfrak C}}

\def\gam{{\mathfrak M}}
\def\gpn{{\mathfrak n}}

\def\bc{{\mathbb C}}

\def\bn{{\mathbb N}}

\def\br{{\mathbb R}}
\def\bt{{\mathbb T}}

\def\bz{{\mathbb Z}}

\def\a{\alpha}
\def\b{\beta}
 
\def\d{\delta}  \def\D{\Delta}

\def\l{\lambda} 
\def\k{\kappa}
\def\m{\mu}

\def\n{\nu}
\def\r{\rho}
\def\s{\sigma} \def\S{\Sigma}

\def\f{\varphi} 
\def\th{\theta}  
\def\om{\omega} 
\def\z{\zeta}

\def\id{\hbox{id}}

\newtheorem{thm}{Theorem}[section]
\newtheorem{lem}[thm]{Lemma}

\newtheorem{prop}[thm]{Proposition}
\newtheorem{defin}[thm]{Definition}
\newtheorem{rem}[thm]{Remark}

\newtheorem{exa}[thm]{Example}

\def\aut{\mathop{\rm Aut}}

\newcommand{\ty}[1]{\mathop{\rm {#1}}}
\def\di{\mathop{\rm d}\!}

\def\idd{{1}\!\!{\rm I}}

\def\tr{\mathop{\rm Tr}}

\begin{document}

\title[quasi-invariant states]
{Dominance and equivalence for states on $C^*$-algebras: Quasi-Invariant states}

\author[A. Dhahri, F. Fidaleo, C. K. Ko, H. J. Yoo]{Ameur Dhahri${}^{\orcidlink{0000-0003-0580-7025}}$}
\address{Ameur Dhahri\\
Mathematics Department \\
Politecnico di Milano\\
Piazza Leonardo da Vinci 32, I - 20133 Milano, Italy} \email{{\tt
ameur.dhahri@polimi.it}}
\author[]{Francesco Fidaleo${}^{\orcidlink{0000-0001-6807-2826}}$}
\address{Francesco Fidaleo\\
Dipartimento di Matematica \\
Universit\`{a} di Roma Tor Vergata\\
Via della Ricerca Scientifica 1, Roma 00133, Italy} \email{{\tt
fidaleo@mat.uniroma2.it}}
\author[]{Chul Ki Ko${}^{\orcidlink{0000-0002-7104-5279}}$}
\address{Chul Ki Ko\\
University College\\
Yonsei University\\
85 Songdogwahak-ro, Yeonsu-gu, Incheon 21983, Korea} \email{{\tt
kochulki@yonsei.ac.kr}}
\author[]{Hyun Jae Yoo${}^{\orcidlink{ 0000-0003-0683-2898}}$}
\address{Hyun Jae Yoo\\
Department of Applied Mathematics and Institute for Integrated Mathematical Sciences \\
Hankyong National University\\
327 Jungang-ro, Anseong-si, Gyeonggi-do 17579, Korea} \email{{\tt
yoohj@hknu.ac.kr}}

\begin{abstract}
In view of natural applications to the notion of quasi-invariance for states under the action of a group on $C^*$-algebras, we study a suitable noncommutative generalisation of dominance and equivalence of measure-theoretic nature. For a state dominated (more precisely, {\it strongly absolutely continuous}, using the terminology in \cite{G})
by a fixed one, we derive the corresponding ``Radon-Nikodym'' derivative which is a, in general, unbounded element associated (i.e., affiliated, in the operator algebra language) with the commutant algebra generated by the Gelfand-Naimark-Segal (GNS for short) representation of the dominating one.
Among the main results, we show that such a notion of dominance is, in general, not transitive. In fact, this unpleasant ``pathology'' may well naturally appear since it is associated to the fact that the product of closed operators (i.e. the corresponding noncommutative version of the Radon Nikodym Derivative) might be non closable.
The investigation of quasi-invariance under such a notion of dominance of a state under the action of a group by $*$-automorphisms is then perfectly meaningful. We show that non transitivity does not appear: the orbit under the action of a group of a fixed quasi-invariant state is made of states which are all mutually equivalent. Conversely, the closure of an orbit of a quasi-invariant state might contain ''singular'' ones. This means that, in general,
the set of quasi-invariant states cannot be topologically closed, whereas it is closed under convex combinations.
We also provide a unitary implementation (which is, in general, not a representation) of the action of the group on the Hilbert space of the GNS for short representation of the quasi-invariant state, generalising the GNS covariant representation associated to an invariant one. In view of applications to Quantum Statistical Mechanics, we also compare our framework of dominance with the Pedersen-Takesaki construction in which the associated Radon-Nikodym Derivative exhibits nice cocycle properties, but is sitted in the von Neumann algebra (or, more precisely, in the centraliser) generated by the GNS representation of a state with central support in the bidual, instead in the commutant. 

\noindent
{\bf Mathematics Subject Classification 2020}: 37A55; 46L30; 46L51; 46L55; 46L60.\\
{\bf Key words}: absolute continuity; dominance; quasi-invariant states; Radon-Nikodym derivative; KMS condition.\\
\end{abstract}

\maketitle

\section{introduction}

The notion of ``dominance'' between measures living on a common Borel Space $X$ is one of the most studied questions in Functional Analysis and Measure Theory. We mention the derived concept of ``Radon-Nikodym'' Derivative (which, in certain sense, also generalises that of the ordinary derivative for smooth functions) which express a measure $\m$ dominated by another one $\n$ through the well-known formula $\di\m=\big[\frac{\di\m}{\di\n}\big]\di\n$. Here, the Radon Nikodym derivative is a positive $\n$-summable function (i.e. $[\frac{\di\m}{\di\n}\big]\in L^1(X,\n)_+$), and
one writes $\m\prec\n$. By looking at measurable sets $A\subset X$, $\m\prec\n$ simply means that 
\begin{equation}
\label{mnust}
\n(A)=0\Rightarrow \m(A)=0\,.
\end{equation}

Connected with this measure-theoretic notion of dominance, there is the notion of equivalence between measures, that is when $\m\prec\n\,\,\&\,\, \n\prec\m$. This means that both boolean measure algebras associated to $\m$ and $\n$ coincide or, equivalently, the reciprocal function $1/\big[\frac{\di\m}{\di\n}\big]$ belongs to $L^1(X,\m)_+$ as well and provide the associated Radon-Nikodym derivative: 
$$
\Big[\frac{\di\n}{\di\m}\Big]=1/\Big[\frac{\di\m}{\di\n}\Big]\,,\quad a.\,e.\,\,\,.
$$
In order to pass to the noncommutative setting, note that, in this case, the underlying objects one is dealing with are (commutative) $C^*$-algebras $\ga$ made of bounded measurable functions on $X$ and, for any probability measure $\m$ on $X$, 
$$
\om_\m(f):=\int_X f(x)\di\m(x)\,,\quad f\in\ga\,.
$$
defines a state on $\ga$. The converse is also true by the Riesz-Markov Theorem, taking into account the Gelfand $*$-isomorphism 
$\ga\sim C_\infty\big(\s(\ga)\big)$.

The generalisation to such a notion of dominance and equivalence among the class of positive (always supposed linear without further mention) functionals to the noncommutative settings seems to be a quite delicate question, perhaps tackled in a very general framework in \cite{G}.  First of all, one is tempted to replace finite measures $\l$ on the Borel space $X$ with positive functionals $\psi$ on the $C^*$-algebra $\ga$ and, instead to compute quantities as $\l(A)$ for measurable sets $A$, replace that with quantities like $\psi(a^*a)$ for $a\in\ga$. Since we can deal with $C^*$-algebras $\ga$, possibly abelian, which does not contain any projection but the trivial ones $0$ and $1$, the notion of dominance based on \eqref{mnust} can produce apparent paradoxes.
This is well explained in \cite{G} by some pivotal examples which we have reported here (cf. Examples \ref{exa1} and \ref{exa2}) for the convenience of the reader. The other difficulty is that, in noncommutative case, the notion of ``point'' disappears, and thus we must reason at level of algebras, dualising all the involved objects.

As explained in Subsection \ref{nodo}, it seems natural to start by the following notions:\\

\smallskip

\noindent
for $\om,\f$ positive linear functionals on the $C^*$-algebra $\ga$, we say that $\om$ is {\it strongly absolutely continuous} w.r.t. $\f$, and write $\om\prec\f$ if, for each sequence $(a_n)_n\subset\ga$ such that
$$
\lim_n\f(a_n^*a_n)=0=\lim_{m,n}\om\big((a_n-a_m)^*(a_n-a_m)\big)\,,
$$
it also results $\lim_n\om(a_n^*a_n)=0$.

\bigskip

\noindent
If $\om$ is strongly absolutely continuous w.r.t. $\f$ according to the previous definition, we write $\om\prec\f$ and, in the case of equivalence (i.e. 
$\om\prec\f\,\,\&\,\,\f\prec\om$), we write $\om\sim\f$.

\medskip

For the positive functional $\f$ on the $C^*$-algebra $\ga$, let $(\pi_\f,\ch_\f,\xi_\f)$ be the associated GNS representation.
As in similar previous papers (cf. \cite{N, G, Hi}), there is a one-to-one correspondence between 
positive functionals $\om$ strongly absolutely continuous w.r.t. $\f$ and positive selfadjoint operators $T_{\om/\f}$ affiliated with the commutant algebra
$\pi_\f(\ga)'$, that is the {\it Radon-Nikodym Derivative}, such that $\pi_\f(\ga)\xi_\f$ is a core for $T_{\om/\f}^{1/2}$ and
$$
\om(a)=\big\langle\pi_\f(a)T_{\om/\f}^{1/2}\xi_\f,T_{\om/\f}^{1/2}\xi_\f\big\rangle\,,\quad a\in\ga\,.
$$
Notice that, this result generalises the well-known one (cf. Theorem \ref{borarb})
relative to the bounded case where, for the Radon-Nikodym Derivative, $T_{\om/\f}\in\pi_\f(\ga)'$.

The (non) surprising fact is that ``$\prec$'', and also ``$\sim$'', are non transitive. This pathology is indeed natural for the following fact: if $\om\prec\f$ and $\f\prec\psi$, the candidate for the  Radon-Nikodym Derivative describing the possible relation 
$\om\prec\psi$ should be recovered by the product $T^{1/2}_{\om/\f}T^{1/2}_{\f/\psi}$ (i.e. the {\it chain rule}, see Theorem \ref{notrans}).
But, it is well known that the product of (unbounded) positive selfadjoint operators may well produce a non closable operator, see Subsection \ref{maov}.

To end this preliminary part of the introduction, we recall that this notion of dominance is used in \cite{N} to generalise the notion of von Neumann Entropy in the setting of $W^*$-algebras, whereas it was used in \cite{Hi} in connected problems involving Quantum Information Geometry.

In the framework of general theory of $W^*$-algebras, we mention the Pedersen-Takesaki construction in which there is also a kind of Radon-Nikodym Derivative satisfying nice cocycle properties. Such an unbounded operator lies (i.e. affiliated)
in the algebra $\pi_\f(\ga)''$ or, more precisely, in the centraliser, generated by the GNS representation of a state with central support in the bidual. Therefore, such a construction has noting to do with the (noncommutative generalisation of classical 
measure-theoretical) dominance between positive linear functionals used in the present paper, unless the Radon-Nikodym derivative is affiliated with the centre. Since this last case has natural applications in Quantum Statistical Mechanics (see e.g. Vol II of \cite{BR}), we also compare our framework of dominance with the Pedersen-Takesaki construction (cf. Section \ref{PTKMS}).

A natural application of the notion of dominance (or, more precisely, strong absolute continuity) and the relative equivalence in the measure-theoretical framework as explained before, is that in Ergodic Theory for noncommutative dynamical systems. Consider a $C^*$-dynamical system $(\ga, G,\a_g)$ made of a $C^*$-algebra $\ga$ and an action $G\stackrel{\a_g}{\curvearrowright}\ga$ of the group $G$ on $\ga$ by $*$-automorphisms. It is a natural customary to look at the (convex, ${}^*$-weakly compact) set $\cs_G(\ga)$ made of states which are invariant under such an action of $G$. If, on one hand, such states have a fundamental importance, mainly for applications to Physics (e.g. \cite{BR, E}), on the other hand, to limit the analysis to invariant states seems to be a too restrictive assumption. The example we would like to mention is the early construction of type III von Neumann factors of desired kind by using the crossed product, see e.g.  \cite{Kr}. Such a construction is based on $C^*$-dynamical systems exhibiting a quasi-invariant, non invariant measure, singular w.r.t. the Haar-Lebesgue measure. Such a construction is not explicit because the existence of such singular quasi-invariant measure is assured by general theorems. Recently, such an abstract construction was made explicit in \cite{FS} in order to construct non type II${}_1$ representations of the irrational rotation algebra, together with the associated modular spectral triples. At the light of the previous considerations, it is very natural to systematically address the study of dynamical systems exhibiting states which are merely quasi-invariant under the action of the involved group. 

The first attempt to investigate quasi-invariance was made in \cite{GK} using the notion of quasi-equivalence of the associated GNS representations. After having clarified the concept of dominance in noncommutative setting and studying the basic properties (see Sections \ref{secmain}, \ref{secmain1}), it is then possible to carry out a fruitful investigation of such dynamical systems exhibiting quasi-invariant, non invariant states.

The first fact we prove is that the pathology of non transitivity does not appear in this situation: for a quasi-invariant state $\f$, we show that $\f\circ\a_g\sim\f\circ\a_h$ for all $g,h\in G$ (cf. Proposition \ref{ortiiq}). As a consequence, we can provide the corresponding ``chain rule'' associated to the Radon-Rikodym Derivative (cf. Theorem \ref{lenire}). The following (not immediate to prove) relevant facts hold true. First, the set of quasi-invariant states is closed under convex combinations (Propositions \ref{concon}, \ref{concon1}), but not topologically closed contrarily to that made by the invariant states. Second, we can exhibit a canonical implementation (Theorem \ref{twogns} and Remark \ref{lenire0}),
which is, in general, not a representation, of the action of the involved group in the GNS representation of the involved quasi-invariant state, generalising the well-known covariant GNS representation associated to an invariant one.

The present paper is complemented by a section (Section \ref{lenire00}) describing an unbounded implementation under which the Radon-Nikodym Derivative satisfies a nice cocycle property. We end with some examples helping to clarify the situation arising from quasi-invariance.

\section{preliminaries}
\label{ptmgv}

We gather here some basic facts useful in the sequel. For more general results, we refer the reader to standard textbooks of Functional Analysis and Operator Theory.
\subsection{Basic facts} 
For the Hilbert space, always built on complex numbers, we denote by $\langle\,\,\,,\,\,\,\rangle$ its inner product, which is supposed to be linear w.r.t. the 1st. variable. With $\cb(\ch)$ and $\cu(\ch)$, we denote the von Neumann algebra consisting of all linear bounded operators, and the group of unitary operators acting on $\ch$, respectively. In the present paper, all operators, as well as functionals, are supposed linear (but Tomita involution and conjugation coming from Tomita theory, which are antilinear) without any further mention.

For a unital involutive algebra $\gc$ with a unique selfadjoint unit $\idd_{\gc}$, we denote it as $1\equiv\idd_{\gc}$ if it causes no confusion. The group of all linear unitary operators (i.e. those satisfying $u^*u=1=uu^*$) is denoted by $\cu(\gc)$. Concerning  a $C^*$-algebra $\ga$, we tacitly suppose that it is unital, if needed, after pointing out that most of the results in the present paper hold true without assuming unitality.

For the convenience of the reader, we report the following basic result.
\begin{lem}
\label{piuno}
Let $\mathcal H$ be a Hilbert space, $W:\mathcal D(W)\subset \mathcal H\rightarrow \mathcal H$ be a densely defined closed operator. Let $\mathcal D$ be a core for $W$. Then
$$
\overline{\mathcal R(W\lceil_\mathcal D)}=\overline{\mathcal R(W)}\,.
$$ 
\end{lem}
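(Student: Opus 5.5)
The inclusion $\overline{\mathcal R(W\lceil_\mathcal D)}\subset\overline{\mathcal R(W)}$ is immediate, since $\mathcal R(W\lceil_\mathcal D)\subset\mathcal R(W)$. So the plan is to prove the reverse inclusion, and for that it is enough to show $\mathcal R(W)\subset\overline{\mathcal R(W\lceil_\mathcal D)}$; taking closures then finishes the argument.

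Fix $\eta\in\mathcal R(W)$ and write $\eta=W\xi$ with $\xi\in\mathcal D(W)$. Saying that $\mathcal D$ is a core for $W$ means that the closure of $W\lceil_\mathcal D$ equals $W$. Equivalently, the graph of $W\lceil_\mathcal D$ is dense in the graph of $W$ for the graph norm $\|\zeta\|+\|W\zeta\|$. Hence we can choose a sequence $(\xi_n)_n\subset\mathcal D$ with $\xi_n\to\xi$ and $W\xi_n\to W\xi=\eta$. Each $W\xi_n$ lies in $\mathcal R(W\lceil_\mathcal D)$, so $\eta$ belongs to $\overline{\mathcal R(W\lceil_\mathcal D)}$. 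This gives $\mathcal R(W)\subset\overline{\mathcal R(W\lceil_\mathcal D)}$, and closing both sides yields the claimed equality.

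There is no real obstacle here. The only point needing care is to use the core property in its graph-norm form, so that we get convergence of the images $W\xi_n$ and not merely of the vectors $\xi_n$. We may also add the equivalent dual formulation: the orthogonal complement of either closed range is $\ker W^*$, because $(W\lceil_\mathcal D)^*=(\overline{W\lceil_\mathcal D})^*=W^*$. This gives the same conclusion and serves as a consistency check.
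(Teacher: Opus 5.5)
Your proof is correct, but its main argument differs from the paper's. You work directly from the definition of a core: the graph of $W\lceil_{\mathcal D}$ is dense in the graph of $W$ for the graph norm. So for each $\xi\in\mathcal D(W)$ there are $\xi_n\in\mathcal D$ with $W\xi_n\to W\xi$, and hence $\mathcal R(W)\subset\overline{\mathcal R(W\lceil_{\mathcal D})}$.

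The paper's proof is the argument you offer only as a consistency check. Setting $W_0:=W\lceil_{\mathcal D}$, it writes $\overline{\mathcal R(W_0)}=\ker(W_0^*)^\perp=\ker(W^*)^\perp=\overline{\mathcal R(W)}$. This uses a textbook result for $\mathcal R(A)^\perp=\ker A^*$, together with $W_0^*=\overline{W_0}^{\,*}=W^*$.

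Your route is more elementary and slightly more general. It needs nothing beyond the graph-norm density that defines a core, and it never uses adjoints, so no density of domains is required.

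The paper's route is a one-line appeal to standard facts. It also delivers the identity $\mathcal R(W\lceil_{\mathcal D})^\perp=\ker W^*$ in exactly the form used later, in the proof of Theorem \ref{invinv}. There, for the selfadjoint $T^{1/2}_{\om/\f}$, it reads $\ker T^{1/2}_{\om/\f}=\mathcal R\big(T^{1/2}_{\om/\f}\lceil_{\pi_\f(\ga)\xi_\f}\big)^\perp$. With your argument you would still have to append $\mathcal R(W)^\perp=\ker W^*$ to reach that form.
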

\begin{proof}
 Let $W_0=W\lceil_\mathcal D$. Since $W$ is closed and $\mathcal D$ is a core for $W$, by Theorem 3.2.3 in \cite{D}, one gets
$\overline{\mathcal R(W_0)}=\mbox{Ker}(W_0^*)^\bot=\mbox{Ker}(W^*)^\bot =\overline{\mathcal R(W)}$.
\end{proof}

For a projection in a $C^*$-algebra, we always mean a selfadjoint one. For the $C^*$-algebra (and, in particular, for a $W^*$-algebra or a von Neumann algebra) $\ga$, with $\cp(\ga)$ and $\cu(\ga)$ we denote the set of all selfadjoint projections and the group of all unitaries in $\ga$ (provided $\ga$ is unital), respectively. 
Notice that the set of the projections of a $C^*$-algebra might consist only of $0$ (and $1$ in the unital case), whereas it generates any $W^*$-algebra. If $\ga=\cb(\ch)$, the von Neumann algebra of all bounded linear operators acting on the Hilbert space $\ch$, the above sets will be simply denoted as $\cp(\ch)$ and $\cu(\ch)$.

Let $\ga$ be a $C^*$-algebra, $\f\in\cs(\ga)$ (or, merely, a positive linear functional on $\ga$), and $\a\in{\rm aut}(\ga)$, the group of all $*$-automorphisms of $\ga$. We denote by $\f_\a:=\f\circ\a$.

For an {\it action} $G\stackrel{\a_g}{\curvearrowright}\ga$ of any group $G$ on a $C^*$-algebra $\ga$, we mean a representation
$$
G\ni g\mapsto\a_g\in{\rm aut}(\ga)
$$
of $G$ into the group ${\rm aut}(\ga)$. For $\f$ being a positive linear functional, we put $\f_g:=\f\circ\a_g\equiv\f_{\a_g}$.

For a {\it unitary representation} $g\to U_g$, we simply mean a representation of $G$ into $\cu(\ch)$.

Let $(M,\ch)$ be a von Neumann algebra.
For $\xi\in\ch$, with $\upsilon_\xi$, we denote the restriction to $M$ of the vector functional associated to $\xi$:
$$
\upsilon_\xi(A):=\langle A\xi,\xi\rangle\,,\quad A\in M\,.
$$
Obviously, $\upsilon_\xi$ depends on the von Neumann algebra $M\subset\cb(\ch)$. For the sake of simplicity, we omit to indicate such a dependence if it causes no confusion. We also say that a vector $\xi\in\ch$ is {\it standard} if it is cyclic and separating for $M$, and then also for $M'$: it means nothing else then $\overline{M\xi}=\ch=\overline{M'\xi}$.

A closed operator $A$ acting on $\ch$, with $A=V|A|$ its polar decomposition, and $|A|=\int\l\di E_{|A|}(\l)$ its resolution, is said to be {\it affiliated} with $M$ (written $A\,\eta\,M$) if, by definition, $\big\{V, (E_{|A|}(\l))_{\l\in\br}\big\}\subset M$. It is well known that
$$
A\,\eta\,M\iff A\,\text{commutes with all unitaries}\,U'\in M'
$$
(i.e. $U'A(U')^*=A$ for all $U'\in\cu(M')$), see e.g. \cite{SZ}, E.9.25.

\bigskip

Let $\ga$ be a $C^*$-algebra, and consider a state (or, merely, a positive linear functional) $\f$ on $\ga$. Its Gelfand-Naimark-Segal (GNS for short) representation is denoted by $(\pi_\f,\ch_\f,\xi_\f)$. 

For such a $C^*$-algebra $\ga$, we consider its canonical injection $\k:\ga\to\ga^{**}$ in the enveloping von Neumann algebra, which is a $*$-monomorphism. For each $\f\in\ga^*$, the relation
\begin{equation}
\label{ssbdh}
\iota(\f)(\k(a))=\f(a)\,,\,\,a\in\ga\,,
\end{equation}
realises the identification $\iota:\ga^*\to(\ga^{**})_*$. We say that $\f\in\cs(\ga)$ has {\it central support in the bidual} if the support 
$s(\iota(\f))\in\cp(\ga^{**})$ of its normal extension in the bidual is central: $s(\iota(\f))\in Z(\ga^{**})$.

\subsection{Some results about Modular Theory} 
For the basic facts about Tomita Theory (also known as {\it Modular Theory} or Tomita-Takesaki Theory) for $W^*$-algebras, the reader is referred to \cite{BR, H, S, SZ}.

Let $\gam$ be a $W^*$-algebra. In order to exhibit a (unique up to unitary equivalence) {\it standard form} for it, it is enough to fix a normal semifinite faithful (n.s.f. for short) weight $\psi$ (which always exists) and construct the GNS representation $(\pi_\psi, \ch_\psi)$ relative to $\psi$. In this situation, $S_\psi$, $J_\psi$ and $\D_\psi$ will denote {\it Tomita's involution}, {\it Tomita's conjugation} and the {\it Modular operator} relative to $\psi$.

For the n.s.f. weight $\psi$, we can identify
$\gam$ with $M:=\pi_\psi(\gam)$, and reduce the matter to the standard form $(M, L^2(M), J, \cp)$ (unique, up to unitary equivalence), where $L^2(M)=\ch_\psi$, $J:=J_\psi$ and, finally, the self-dual cone $\cp$ is that associated with $\psi$, see \cite{H}. 
Therefore, each normal positive functional $\f$ of a von Neumann algebra in standard form is represented as a vector functional 
$\upsilon_{\xi}$ for some $\xi\in L^2(M)$ as described above. Such a choice of the representative vector $\xi$ is unique if one requires that $\xi\in\cp$, the canonical cone associated to the standard form.

We slightly describe the case of a type $\ty{I}$ factor, that is $M=\cb(\ell^2(\Lambda))$, $\Lambda$ being any index-set. On $\ell^2(\Lambda)$ and $\cb(\ell^2(\Lambda))$, we consider the canonical basis $\{e_i\}_{i\in \Lambda}$, and the canonical system of matrix-units $\{e_{ij}\}_{i,j\in \Lambda}$. Any element $\xi\in\ell^2(\Lambda)$ and $X\in\cb(\ell^2(\Lambda))$ have the expansions
$\xi=\sum_{i\in \Lambda}x_ie_i$, $X=\sum_{i,j\in \Lambda}X_{ij}e_{ij}$, respectively.
In such a situation, $\cb(\ell^2(\Lambda))$ is represented in standard form on $\ch:=L^2(\cb(\ell^2(\Lambda)))$ consisting of all Hilbert-Schmidt linear operators. Indeed, for 
$\xi=\sum_{i,j\in \Lambda}\xi_{ij}e_{ij}$, $\eta=\sum_{i,j\in \Lambda}\eta_{ij}e_{ij}$,
elements of $L^2(\cb(\ell^2(\Lambda)))$,
the inner product is given by
$$
L^2(\cb(\ell^2(\Lambda))\ni \xi,\eta\mapsto \langle \xi\,,\eta\rangle:={\rm Tr}(\eta^*\xi)=\sum_{i,j\in \Lambda} \xi_{ij}\overline{\eta_{ij}}\in\bc\,.
$$
 
For $X\in\cb(\ell^2(\Lambda))$ and $\xi\in L^2(\cb(\ell^2(\Lambda)))$ , the {\it left} and {\it right representations} of $\cb(\ell^2(\Lambda))$  are defined as follows:
\begin{equation}
\label{anakd}
\cb(\ell^2(\Lambda))\ni X \mapsto\left\{\begin{array}{ll}
                     \!\!\!\!\!\!\!&(\pi_{\rm L}(X)\xi)_{ij}:=\sum_{k\in \Lambda} X_{ik}\xi_{kj} \\[1ex]
\!\!\!\!\!\!\!&(\pi_{\rm R}(X)\xi)_{ij}:=\sum_{k\in \Lambda} \xi_{ik}X_{kj}
                    \end{array}
                    \right\}\in L^2(\cb(\ell^2(\Lambda))\,.
\end{equation} 
Notice that $\pi_{\rm L}\big(\cb(\ell^2(\Lambda))\big)'=\pi_{\rm R}\big(\cb(\ell^2(\Lambda))\big)$, and vice-versa.

Fix now a double sequence $(A_{ij})_{i,j\in \Lambda}$ and define, as in \eqref{anakd},
$$
\pi_{\rm L}(A)\xi:=\sum_{i,j\in \Lambda}\Big(\sum_{k\in \Lambda}A_{ik}\xi_{kj}\Big)e_{ij}\,,\quad
\pi_{\rm R}(A)\xi:=\sum_{i,j\in \Lambda}\Big(\sum_{k\in \Lambda}\xi_{ik}A_{kj}\Big)e_{ij}\,,
$$
on the domains
\begin{align*}
&\cd({\pi_{\rm L}}(A)):=\big\{\xi\in L^2(\cb(\ell^2(\Lambda)))\,;\,\|\pi_{\rm L}(A)\xi\|<+\infty\big\}\,,\\
&\cd({\pi_{\rm R}}(A)):=\big\{\xi\in L^2(\cb(\ell^2(\Lambda)))\,;\,\|\pi_{\rm R}(A)\xi\|<+\infty\big\}\,.
\end{align*}

The proof of the following result is quite standard and is left to the reader.
\begin{prop}
\label{poseaf}
With the above notations, $\pi_{\rm L}(A)$, $\pi_{\rm R}(A)$ are densely defined closed linear operators and, in addition,
\begin{align*}
&\pi_{\rm L}(A)\,\eta\,\pi_{\rm L}(\cb(\ell^2(\Lambda)))=\pi_{\rm R}(\cb(\ell^2(\Lambda)))'\,,\\
&\pi_{\rm R}(A)\,\eta\,\pi_{\rm R}(\cb(\ell^2(\Lambda)))=\pi_{\rm L}(\cb(\ell^2(\Lambda)))'\,.
\end{align*}
\end{prop}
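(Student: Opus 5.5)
The approach is to reduce everything to entrywise computations in the matrix picture of $L^2(\cb(\ell^2(\Lambda)))$. For the statement to be meaningful, I will make explicit what I take to be the tacit standing assumption on the double sequence $(A_{ij})$: every row and every column of $A$ is square summable, i.e. $\sum_{k}|A_{ik}|^2<+\infty$ and $\sum_k|A_{kj}|^2<+\infty$ for all $i,j\in\Lambda$. Without the column condition the finitely supported matrices need not lie in $\cd(\pi_{\rm L}(A))$. Indeed, $\pi_{\rm L}(A)e_{kj}$ has entries $A_{ik}$ in column $j$. The row condition is what makes the sums $\sum_k A_{ik}\xi_{kj}$ absolutely convergent for every Hilbert–Schmidt $\xi$. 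By the symmetry $\xi\mapsto\xi^{T}$ (the transpose in the canonical basis, a unitary on $L^2$ exchanging $\pi_{\rm L}(A)$ and $\pi_{\rm R}(A^{T})$), it suffices to treat $\pi_{\rm L}(A)$.

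\emph{Dense domain.} For $\xi=e_{kj}$ one gets $\|\pi_{\rm L}(A)e_{kj}\|^2=\sum_i|A_{ik}|^2<+\infty$. Hence the linear span of the matrix units, which is dense in $L^2(\cb(\ell^2(\Lambda)))$, is contained in $\cd(\pi_{\rm L}(A))$.

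\emph{Closedness.} Let $\xi_n\to\xi$ in $L^2$ with $\xi_n\in\cd(\pi_{\rm L}(A))$ and $\pi_{\rm L}(A)\xi_n\to\eta$. For fixed $i,j$, the map $\xi\mapsto\sum_kA_{ik}\xi_{kj}$ is a bounded functional by Cauchy–Schwarz, with norm at most $\big(\sum_k|A_{ik}|^2\big)^{1/2}$. Coordinate functionals are also continuous. Hence $\eta_{ij}=\lim_n(\pi_{\rm L}(A)\xi_n)_{ij}=(\pi_{\rm L}(A)\xi)_{ij}$ for every $i,j$. Since $\eta\in L^2$, this gives $\|\pi_{\rm L}(A)\xi\|<+\infty$, that is $\xi\in\cd(\pi_{\rm L}(A))$ and $\pi_{\rm L}(A)\xi=\eta$.

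\emph{Affiliation.} I use the criterion recalled above (\cite{SZ}, E.9.25): it is enough to show $\pi_{\rm R}(U)\pi_{\rm L}(A)\pi_{\rm R}(U)^*=\pi_{\rm L}(A)$ for every unitary $U\in\cb(\ell^2(\Lambda))$. Together with $\pi_{\rm L}(\cb(\ell^2(\Lambda)))'=\pi_{\rm R}(\cb(\ell^2(\Lambda)))$ and the bicommutant theorem, this gives $\pi_{\rm L}(A)\,\eta\,\pi_{\rm L}(\cb(\ell^2(\Lambda)))=\pi_{\rm R}(\cb(\ell^2(\Lambda)))'$. The key identity, for $\xi\in\cd(\pi_{\rm L}(A))$ and any $X\in\cb(\ell^2(\Lambda))$, is
$$
\sum_kA_{ik}\Big(\sum_l\xi_{kl}X_{lj}\Big)=\sum_l\Big(\sum_kA_{ik}\xi_{kl}\Big)X_{lj}\,.
$$
Once it holds, $\pi_{\rm L}(A)\pi_{\rm R}(X)\xi=\pi_{\rm R}(X)\pi_{\rm L}(A)\xi$, whose norm is finite because $\pi_{\rm R}(X)$ is bounded. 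So $\pi_{\rm R}(X)\cd(\pi_{\rm L}(A))\subset\cd(\pi_{\rm L}(A))$. Applying this to $U$ and $U^*$ yields equality of domains and the required commutation.

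The main obstacle is justifying the interchange of summations in the key identity, which requires absolute convergence of $\sum_{k,l}|A_{ik}||\xi_{kl}||X_{lj}|$. I plan to bound it by Cauchy–Schwarz in $k$:
$$
\sum_{k,l}|A_{ik}||\xi_{kl}||X_{lj}|\le\Big(\sum_k|A_{ik}|^2\Big)^{1/2}\Big(\sum_k\big(\textstyle\sum_l|\xi_{kl}||X_{lj}|\big)^2\Big)^{1/2}\,.
$$
The second factor is finite. Indeed, $\sum_l|\xi_{kl}||X_{lj}|\le\|\xi_{k\cdot}\|_{\ell^2}\,\|Xe_j\|$, and $\sum_k\|\xi_{k\cdot}\|^2=\|\xi\|_2^2<+\infty$. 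Fubini for absolutely convergent double series then settles the interchange.
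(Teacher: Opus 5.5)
Your proof is correct and complete. The paper gives no proof here (it calls the result ``quite standard'' and leaves it to the reader), and your entrywise argument is the natural verification: square-summable rows make the coordinate functionals bounded and give closedness, square-summable columns put the matrix units in the domain and give density, absolute convergence plus Fubini gives $\pi_{\rm R}(X)\pi_{\rm L}(A)\subset\pi_{\rm L}(A)\pi_{\rm R}(X)$, and the transpose unitary transfers everything to $\pi_{\rm R}(A)$.

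The hypothesis you make explicit is genuinely needed, because for an arbitrary double sequence the statement is false. Take $\Lambda$ infinite:
\begin{itemize}
\item With $A_{ik}\equiv1$, $\pi_{\rm L}(A)$ is the zero operator on the dense but proper domain of matrices whose column sums vanish, so it is not closed.
\item With $A_{ik}=\delta_{k,k_0}$ the rows are square summable, but the domain is the closed proper subspace $\{\xi\,;\,\xi_{k_0j}=0\ \forall j\}$, which is not dense.
\end{itemize}
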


\medskip

We now pass to the following result, quite well known to the experts. Here, we sketch proof for the convenience of the reader.
\begin{thm}
\label{zscsu}
Let $\ga$ be a $C^*$-algebra, and $\f\in\cs(\ga)$. Then $\f$ has central support in the bidual if and only if $\xi_\f$ is a standard vector.
\end{thm}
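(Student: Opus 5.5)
We pass to the bidual and identify $\pi_\f(\ga)''$ with a reduced algebra of $\ga^{**}$. Let $\tilde\pi_\f:\ga^{**}\to\cb(\ch_\f)$ be the normal extension of $\pi_\f$, so that $\tilde\pi_\f\circ\k=\pi_\f$ and $\tilde\pi_\f(\ga^{**})=\pi_\f(\ga)''$. Its kernel is a $\s$-weakly closed two-sided ideal, hence of the form $(1-z)\ga^{**}$ for a unique central projection $z\in Z(\ga^{**})$. Thus $\tilde\pi_\f$ restricts to a $*$-isomorphism $z\ga^{**}\cong\pi_\f(\ga)''$.

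We write $e:=s(\iota(\f))$ for the support in the bidual. We first note that $\iota(\f)(x)=\langle\tilde\pi_\f(x)\xi_\f,\xi_\f\rangle$ for $x\in\ga^{**}$, since both sides are normal and agree on $\k(\ga)$ by \eqref{ssbdh}. Consequently $e\le z$, because $\iota(\f)(1-z)=0$. Transporting through the isomorphism, $\tilde\pi_\f(e)$ is the support of the vector functional $\upsilon_{\xi_\f}$ on $M:=\pi_\f(\ga)''$. By the standard fact, this support is the projection onto $\overline{M'\xi_\f}$.

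The second ingredient is that $z$ is the central support $c(e)$ of $e$. Indeed, $1-c(e)$ is a central projection killed by $\iota(\f)$, hence by the vector functional. Its image $\tilde\pi_\f(1-c(e))$ is central in $M$ and annihilates $\xi_\f$. Since $\xi_\f$ is cyclic for $\pi_\f(\ga)$, hence for $M$, a central projection of $M$ vanishing on $\xi_\f$ vanishes on $M\xi_\f$, which is dense. So $\tilde\pi_\f(1-c(e))=0$, giving $1-c(e)\le 1-z$, that is $z\le c(e)$. Combined with $e\le z$ and $z$ central, this yields $z=c(e)$.

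Now both directions follow. If $e$ is central, then $e=c(e)=z$, so $\tilde\pi_\f(e)=\tilde\pi_\f(z)=1$. Hence $\overline{M'\xi_\f}=\ch_\f$, meaning $\xi_\f$ is cyclic for $M'$, i.e.\ separating for $M$. Together with cyclicity, $\xi_\f$ is standard. Conversely, if $\xi_\f$ is separating for $M$, the support of $\upsilon_{\xi_\f}$ on $M$ is $1$, so $\tilde\pi_\f(e)=1=\tilde\pi_\f(z)$. Since $\tilde\pi_\f$ is injective on $z\ga^{**}$ and $e,z\in z\ga^{**}$, we get $e=z$, which is central.

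The main point needing care is the identification of $\iota(\f)$ with the vector state composed with $\tilde\pi_\f$, together with the fact that $\ker\tilde\pi_\f$ is generated by a central projection. Both are standard properties of the universal enveloping von Neumann algebra, and we invoke them rather than reprove them.
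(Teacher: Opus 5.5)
Your proof is correct and follows essentially the same route as the paper: you pass to the normal extension of $\pi_\f$ on $\ga^{**}$, identify its kernel with $(1-z)\ga^{**}$ for $z$ the central support of $s(\iota(\f))$, and reduce standardness of $\xi_\f$ (i.e.\ faithfulness of $\upsilon_{\xi_\f}$ on $\pi_\f(\ga)''$) to the equality $s(\iota(\f))=z$. The only difference is that you verify $z=c(e)$ explicitly using cyclicity of $\xi_\f$, whereas the paper takes this for granted in \eqref{ssbdh1}.
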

\begin{proof}
Notice that, for $\pi_\f^{**}=\pi_{\iota(\f)}$ ($\iota(\f)$ given in \eqref{ssbdh}) we have 
\begin{equation}
\label{ssbdh2}
\pi_\f^{**}(\ga^{**})=\pi_{\iota(\f)}(\ga^{**})=\pi_\f(\ga)''\subset\cb(\ch_\f)
\end{equation}
since $\iota(\f)$ is normal. Therefore, for the (weak${}^*$-closed) left ideal associated to $\iota(\f)$ and
the kernel of $\pi_{\iota(\f)}$ given, respectively, by
\begin{align*}
\gpn_{\iota(\f)}:=&\{x\in\ga^{**}\,;\,\iota(\f)(x^*x)=0\}\,,\\ 
{\rm Ker}(\pi_{\iota(\f)}):=&\{x\in\ga^{**}\,;\,\pi_{\iota(\f)}(x)=0\}\,,
\end{align*}
we have
\begin{equation}
\label{ssbdh1}
\ga^{**}\big(1-z(\iota(\f))\big)={\rm Ker}(\pi_{\iota(\f)})\subset\gpn_{\iota(\f)}=\ga^{**}\big(1-s(\iota(\f))\big)\,.
\end{equation}

The property of $\xi_\f$ to being standard happens if and only if $\upsilon_{\xi_\f}$ is faithful, which is equivalent to the fact that
${\rm Ker}(\pi_{\iota(\f)})$ coincides with $\gpn_{\iota(\f)}$ by \eqref{ssbdh2}. By \eqref{ssbdh1}, the last property is equivalent to
the fact that the support $s(\iota(\f))$ of $\iota(\f)$ coincides with its central support $z(\iota(\f))$. Therefore, the property of $\f$ to having central support is equivalent to the property of $\xi_\f$ to being standard.
\end{proof}

For $\f\in\cs(\ga)$, on one hand (by \eqref{ssbdh2})
\begin{equation}
\label{ssbdh3}
\pi_\f(\ga)''\sim\ga^{**}z(\iota(\f))\,.
\end{equation}
If $\f$ has central support, $z(\iota(\f))=s(\iota(\f))$ and the state $\iota(\f)$ is a KMS state w.r.t. its own modular group 
$\big(\s_t^{\iota(\f)}\big)_{t\in\br}$, which acts identically on $\ga^{**}\big(1-s(\iota(\f))\big)$, see e.g. \cite{BR}, Theorem 5.3.10. On the other hand, since $\xi_\f$ is a standard vector for $\pi_\f(\ga)''$, $\upsilon_{\xi_\f}$ (restricted to $\pi_\f(\ga)''$) is a KMS state w.r.t. its own modular group $\big(\s_t^{\upsilon_{\xi_\f}}\big)_{t\in\br}$ as well, e.g. \cite{BR}, Section 2.5.
\begin{rem}
Under the identification \eqref{ssbdh3}, we have 
$$
\s_t^{\upsilon_{\xi_\f}}\sim\s_t^{\iota(\f)}\lceil_{\ga^{**}s(\iota(\f))}\,,\quad t\in\br\,.
$$
\end{rem}
Summarising, for a state $\f$ on the $C^*$-algebra $\ga$ with central support, we can directly consider the generated von Neumann algebra 
$(\pi_\f(\ga)'',\ch_\f)$. In this situation, for the associate modular operators, we simply  write $S_\f\equiv S_{\upsilon_{\xi_\f}}$, $J_\f\equiv J_{\upsilon_{\xi_\f}}$ and 
$\D_\f\equiv \D_{\upsilon_{\xi_\f}}$.

\subsection{On various notions of dominance} 
\label{nodo}
We report a preliminary analysis extracted from the seminal paper \cite{G}, explaining the motivations in assuming the definition of dominance in the present paper.

Let $\ga$ be an involutive algebra (no topology is assumed at moment, but for the rest of the present paper we indeed deal with $C^*$-algebras) and $\f$ and $\om$ two positive (always linear without any further mention) functionals. 

A sequence 
$(a_n)_n\subset\ga$ is called a $(\f,\om)${\it -sequence} if
$$
\lim_n\f(a_n^*a_n)=0=\lim_{m,n}\om\big((a_n-a_m)^*(a_n-a_m)\big)\,.
$$
\begin{defin}
\label{dom}
For $\om,\f$ positive functionals, we say that 
\begin{itemize}
\item[(i)] $\om$ is $\f$-{\it dominated} if there exists $M>0$ such that
$$
\om(a^*a)\leq M\f(a^*a)\,,\quad a\in\ga\,;
$$
\item[(ii)] $\om$ is {\it strongly $\f$-absolutely continuous} if, for each $(\f,\om)$-sequence $(a_n)_n\subset\ga$, it results
$\lim_n\om(a_n^*a_n)=0$;
\item[(iii)] $\om$ is {\it $\f$-absolutely continuous} if
$$
\f(a^*a)=0\Rightarrow\om(a^*a)=0\,,\quad a\in\ga\,.
$$
\end{itemize}
\end{defin}
For the convenience of the reader, we report the following result (\cite{BR}, Theorem 2.3.19), generalising the well-known situation in measure theory to the noncommutative case.
\begin{thm}
\label{borarb}
For a $C^*$-algebra $\ga$ and $\f$ a fixed positive linear functional with $(\pi_\f,\ch_\f,\xi_\f)$ its GNS representation, there is a one-to-one correspondence between positive $\f$-dominated functionals $\om$, and positive elements $T^{1/2}_{\om/\f}\in\pi_\f(\ga)'$ such that
$$
\om(a)=\langle\pi_\f(a)T^{1/2}_{\om/\f}\xi_\f,T^{1/2}_{\om/\f}\xi_\f\rangle\,,\quad a\in\ga\,.
$$
\end{thm}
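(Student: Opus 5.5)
The plan is the usual sesquilinear-form argument. Given $\om$ with $\om(a^*a)\leq M\f(a^*a)$ for all $a\in\ga$, I first define a form on the dense subspace $\pi_\f(\ga)\xi_\f\subset\ch_\f$ by
$$
B\big(\pi_\f(a)\xi_\f,\pi_\f(b)\xi_\f\big):=\om(b^*a)\,,\quad a,b\in\ga\,.
$$
The first point is that $B$ is well defined. If $\pi_\f(a)\xi_\f=0$, then $\f(a^*a)=0$, so $\om(a^*a)=0$ by dominance. The Cauchy--Schwarz inequality for $\om$ then gives $\om(b^*a)=0$. The same estimate shows $|B(\xi,\eta)|\leq M\|\xi\|\|\eta\|$, so $B$ extends by continuity to a bounded positive form on $\ch_\f$. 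By the Riesz representation theorem there is a unique positive $T\in\cb(\ch_\f)$ with $\|T\|\leq M$ and $B(\xi,\eta)=\langle T\xi,\eta\rangle$.

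Next I show that $T\in\pi_\f(\ga)'$. For $a,b,c\in\ga$,
$$
\langle T\pi_\f(c)\pi_\f(a)\xi_\f,\pi_\f(b)\xi_\f\rangle=\om(b^*ca)=\om\big((c^*b)^*a\big)=\langle \pi_\f(c)T\pi_\f(a)\xi_\f,\pi_\f(b)\xi_\f\rangle\,.
$$
Density of $\pi_\f(\ga)\xi_\f$ then gives $T\pi_\f(c)=\pi_\f(c)T$. Setting $T^{1/2}_{\om/\f}:=T^{1/2}$, which lies in $\pi_\f(\ga)'$ by functional calculus, yields $\om(a)=\langle\pi_\f(a)T^{1/2}\xi_\f,T^{1/2}\xi_\f\rangle$ for $a\in\ga$. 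In the non-unital case I would first check this on products $b^*a$ and then pass to a limit along an approximate unit. Alternatively I would work with $\om(b^*a)$ throughout and use that $\om$ is determined by these values, since $\ga^2=\ga$ by Cohen factorisation.

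For the converse, any positive $K\in\pi_\f(\ga)'$ defines $\om(a):=\langle\pi_\f(a)K\xi_\f,K\xi_\f\rangle$. This functional is positive, and
$$
\om(a^*a)=\|K\pi_\f(a)\xi_\f\|^2\leq\|K\|^2\f(a^*a)\,,
$$
so $\om$ is $\f$-dominated.

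It remains to prove that the correspondence is one-to-one, and this is the step needing the most care. Suppose $K\in\pi_\f(\ga)'_+$ represents $\om$. Then
$$
\langle K^2\pi_\f(a)\xi_\f,\pi_\f(b)\xi_\f\rangle=\om(b^*a)=\langle T\pi_\f(a)\xi_\f,\pi_\f(b)\xi_\f\rangle\,,
$$
so by density $K^2=T$. Uniqueness of the positive square root then gives $K=T^{1/2}$, and this closes the correspondence.
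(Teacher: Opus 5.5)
Your argument is correct. It is the standard sesquilinear-form proof of Bratteli--Robinson, Theorem 2.3.19: Riesz representation of $B(\pi_\f(a)\xi_\f,\pi_\f(b)\xi_\f)=\om(b^*a)$, the commutation check on the dense subspace, and an approximate unit in the non-unital case. The paper simply cites that theorem rather than proving it, and your only departures are cosmetic: you allow an arbitrary domination constant $M$ instead of $\om\leq\f$, and you parametrise by $T^{1/2}$, so injectivity reduces to uniqueness of positive square roots.
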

Also the following examples, reported from \cite{G}, are in order.
\begin{exa}
\label{exa1}
We consider a measurable space $(X,\S)$ equipped with two probability measures $\m$ and $\n$. Let $\ga$ be the $C^*$-algebra made of all bounded measurable functions w.r.t. the sup-norm, and define
$$
\om(f):=\int_X fd\m,\,\,\f(f):=\int_X fd\n,\quad f\in\ga\,.
$$
\end{exa}
By using the Radon-Nikodym Theorem, it is quite easy to see that, in Example \ref{exa1}:
\begin{itemize}
\item[$1^o$] $\om$ is {\it $\f$-absolutely continuous}$\iff \m\prec\n$ in the measure-theoretical setting;
\item[$2^o$] by using the classical Radon-Nikodym Theorem, it is shown that
$$
\m\prec\n\Rightarrow\om\,\,\text{is strongly $\f$-absolutely continuous}\,. 
$$
\end{itemize}
Therefore, (ii) is equivalent to (iii) in this situation. On the other hand, it is well known that (iii) cannot imply (i).
\begin{exa}
\label{exa2}
Let $\ga=C([0,1])$, with $\di x$ denoting the Lebesgue measure. For $f\in C([0,1])$, put
\begin{equation*}
\begin{split}
&\f(f):=\int_0^1 f(x)dx\,,\\
&\om(f)=f(x_o)\,\,\text{for each fixed}\,\, x_o\in[0,1]\,.
\end{split}
\end{equation*}
\end{exa}
In Example \ref{exa2}, it is almost immediate to see that $\om$ is $\f$-absolutely continuous but not strongly $\f$-absolutely continuous.

Summarising, we first note that, clearly (i)$\Rightarrow$(ii)$\Rightarrow$(iii). The previous examples lead to (iii)$\nRightarrow$(ii)$\nRightarrow$(i) and, on the other hand, (iii) is not the right notion of dominance (in Example \ref{exa2}, $\om$ is $\f$-absolutely continuous but the underlying measures are mutually singular). Therefore, we will adopt (ii) in Definition \ref{dom} for the extension of the {\it dominance} to the general cases including, mainly, the noncommutative ones.

\section{The dominance relation and the Radon-Nikodym derivative}
\label{secmain}

Let $\ga$ be a $C^*$-algebra and $\om,\f\in\ga^*_+$ two positive functionals. In the present paper, we assume (ii) in Definition \ref{dom} as definition of dominance and, if this is the case that is $\om$ is strongly $\f$-absolutely continuous, we write $\om\prec\f$. If, also, $\f\prec\om$, we write $\om\sim\f$.
We will see later that ``$\prec$'' is not transitive because transitivity is equivalent to closability of the product of the corresponding ``Radon-Nikodym'' derivatives which is, of course, not always granted. Consequently, it is not a partial order relation among the set of the positive linear functionals on $\ga$. As a consequence, ``$\sim$'' is not an equivalence relation. We also will see that, if states which are quasi-invariant under the action of a group, such a ``pathology'' disappears.

The following result is the adaptation to our situation of the previous analogous results (e.g. \cite{N}, Theorem 2.3; \cite{G}, Theorem 1; \cite{Hi}, Lemma 3.1).
\begin{thm}
\label{main}
Let $\ga$ be a $C^*$-algebra and $\f,\om\in\ga^*_+$. The following assertions are equivalent:
\begin{enumerate} 
\item[(i)] $\om\prec\f$;
\item[(ii)] there exists a positive self-adjoint operator $T_{\om/\f}$, affiliated with $\pi_\f(\ga)'$, such that 
$\pi_\f(\ga)\xi_\f$ is a core for $T_{\om/\f}^{1/2}$ and
$$
\om(x)=\big\langle T_{\om/\f}^{1/2}\pi_\f(x)\xi_\f,T_{\om/\f}^{1/2}\xi_\f\big\rangle\,,\quad x\in\ga\,.
$$
\end{enumerate}
If one of the above equivalent conditions holds true and, in particular, (ii), the operator $T_{\om/\f}$ is uniquely determined.
\end{thm}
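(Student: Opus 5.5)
The plan is the classical closable-form argument, in the spirit of \cite{N, G, Hi}.

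For (i)$\Rightarrow$(ii), I would work on the dense subspace $\cd_0:=\pi_\f(\ga)\xi_\f\subset\ch_\f$ and define the map
$$
W_0:\pi_\f(a)\xi_\f\mapsto\pi_\om(a)\xi_\om\,,\quad a\in\ga\,.
$$
This map is well defined. Indeed, if $\pi_\f(a)\xi_\f=0$, then the constant sequence $a_n=a$ is a $(\f,\om)$-sequence, so $\om(a^*a)=0$. The same reasoning, applied to the sequences themselves, shows that $W_0:\cd_0\to\ch_\om$ is closable. Suppose $\pi_\f(a_n)\xi_\f\to0$ and $\pi_\om(a_n)\xi_\om\to\eta$. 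Then $(a_n)_n$ is a $(\f,\om)$-sequence, so $\|\pi_\om(a_n)\xi_\om\|^2=\om(a_n^*a_n)\to0$, and hence $\eta=0$.

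Let $W$ be the closure, and put $T_{\om/\f}:=W^*W$. This is positive and selfadjoint, and $T_{\om/\f}^{1/2}=|W|$ has exactly the domain $\cd(W)$, with $\|T^{1/2}_{\om/\f}\zeta\|=\|W\zeta\|$. Since $\cd_0$ is a core for $W$, it is also a core for $|W|$. The formula follows from
$$
\om(b^*a)=\langle W\pi_\f(a)\xi_\f,W\pi_\f(b)\xi_\f\rangle=\langle|W|\pi_\f(a)\xi_\f,|W|\pi_\f(b)\xi_\f\rangle\,.
$$
Here I take $b$ to be an approximate unit (or $b=1$ in the unital case) and use $\xi_\f=\lim\pi_\f(e_\l)\xi_\f$ inside the closed graph.

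The main step is affiliation with $\pi_\f(\ga)'$. For $x\in\ga$, we have $W\pi_\f(x)=\pi_\om(x)W$ on $\cd_0$, hence on $\cd(W)$ by closure. This intertwining relation gives $\pi_\f(x)^*W^*\subset W^*\pi_\om(x)^*$, and therefore $\pi_\f(x)W^*W\subset W^*W\pi_\f(x)$ for all $x$. So $T_{\om/\f}$ commutes with the selfadjoint set $\pi_\f(\ga)$, and thus with $\pi_\f(\ga)''$. In particular it commutes with every unitary of $\pi_\f(\ga)''$, which by the criterion recalled in \cite{SZ} means $T_{\om/\f}\,\eta\,\pi_\f(\ga)'$. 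The one point needing care is that for an unbounded selfadjoint operator, commuting with bounded operators in the sense $BT\subset TB$ implies that $B$ commutes with the spectral projections; this is standard.

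For (ii)$\Rightarrow$(i), take a $(\f,\om)$-sequence $(a_n)_n$ and set $\zeta_n:=\pi_\f(a_n)\xi_\f\in\cd_0$. Affiliation gives $T^{1/2}\pi_\f(a)\xi_\f=\pi_\f(a)T^{1/2}\xi_\f$, because $\pi_\f(a)$ commutes with $T^{1/2}$. Using this, polarization yields $\om(a^*a)=\|T^{1/2}_{\om/\f}\pi_\f(a)\xi_\f\|^2$. Then $\zeta_n\to0$ while $T^{1/2}\zeta_n$ is Cauchy, and closedness of $T^{1/2}$ forces $T^{1/2}\zeta_n\to0$, i.e. $\om(a_n^*a_n)\to0$.

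For uniqueness, suppose $T$ and $T'$ both satisfy (ii). Then $\|T^{1/2}\zeta\|=\|T'^{1/2}\zeta\|$ on the common core $\cd_0$. The closures of the two restrictions therefore have identical graphs-norms and identical domains, and agree in norm, so the associated closed quadratic forms coincide. Hence $T=T'$ by the representation theorem for closed forms.
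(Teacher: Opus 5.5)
Your proof is correct and follows essentially the same route as the paper. Both use the closable map $\pi_\f(a)\xi_\f\mapsto\pi_\om(a)\xi_\om$ obtained from $(\f,\om)$-sequences, set $T_{\om/\f}=W^*W$ with $\pi_\f(\ga)\xi_\f$ as a core, and use closedness of $T^{1/2}_{\om/\f}$ for the converse. The deviations are minor and, if anything, more careful: you obtain affiliation from the intertwining $\pi_\om(x)W\subset W\pi_\f(x)$ for all $x\in\ga$ rather than from unitaries of $\ga$, so no unit is needed; you justify $\om(a^*a)=\|T^{1/2}_{\om/\f}\pi_\f(a)\xi_\f\|^2$ via affiliation, where the paper simply asserts it; and you prove uniqueness via the representation theorem for closed forms instead of the paper's partial-isometry and polar-decomposition argument.
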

\begin{proof}
We drop the subscripts to simplify notations.

(i)$\Rightarrow$(ii) On $D:=\pi_\f(\ga)\xi_\f$ we put
$$
\pi_\f(\ga)\xi_\f\ni\pi_\f(a)\xi_\f\mapsto R_o\pi_\f(a)\xi_\f:=\pi_\om(a)\xi_\om\,.
$$
First we notice that $R_o$ is well defined and closable on $D$. 
Indeed, suppose that $\pi_\f(a)\xi_\f=0$ and consider the stationary sequence $a_n=a$. Then
\begin{align*}
0=&\|\pi_\f(a)\xi_\f\|^2=\f(a^*a)=\lim_n\f(a_n^*a_n)\Rightarrow\lim_n\om(a_n^*a_n)\\
=&\om(a^*a)=\|\pi_\om(a)\xi_\om\|^2=0\,,
\end{align*}
and thus $\pi_\f(a)\xi_\f=0\Rightarrow\pi_\om(a)\xi_\om=0$.
Passing to closability, with $\xi_n:=\pi_\f(x_n)\xi_\f$ (i) means
$$
\lim_n\|\xi_n\|^2=0=\lim_{m,n}\|R_o\xi_m-R_o\xi_n\|^2\Rightarrow\lim_n\|R_o\xi_n\|^2=0\,.
$$
But, this is nothing else than the closability of $R_o$ on its core $D$.

Now we follow the proof of the analogous Lemma 3.1 in \cite{Hi}. Denote $R:=\overline{R_o}$, with polar decomposition $R=VT^{1/2}$. Now, if $u\in\mathcal U(\ga)$, we get
\begin{align*}
\pi_\om(u)R_o\pi_\f(u^*)\pi_\f(a)\xi_\f=&\pi_\om(u)R_o\pi_\f(u^*a)\xi_\f=\pi_\om(u)\pi_\om(u^*a)\xi_\om\\
=&\pi_\om(a)\xi_\om=R_o\pi_\f(a)\xi_\f\,,
\end{align*}
so $\pi_\om(u)R_o\pi_\f(u^*)=R_o$.
Therefore, by passing to the closures, 
$$
\pi_\om(u)R\pi_\f(u^*)=R\,,\quad u\in\cu(\ga)\,.
$$
By considering $T=R^*R$, we obtain
\begin{align*}
T=&R^*R=\pi_\f(u)R^*\pi_\om(u^*)\pi_\om(u)R\pi_\f(u^*)\\
=&\pi_\f(u)R^*R\pi_\f(u^*)=\pi_\f(u)T\pi_\f(u^*)\,.
\end{align*}
Since $T$ commutes with all unitaries of the form $U=\pi_\f(u)$, $u$ unitary in $\ga$, $T$, and thus $T^{1/2}$ is affiliated to $\pi_\f(\ga)'$.

By definition, $\pi_\f(\ga)\xi_\f$ is a core for $R=\overline{R_o}$, and thus it is also a core for $T^{1/2}$. Finally,
\begin{align*}
\om(x)=&\langle\pi_\om(x)\xi_\om,\xi_\om\rangle=\langle R\pi_\f(x)\xi_\f,R\xi_\f\rangle=\langle VT^{1/2}\pi_\f(x)\xi_\f,VT^{1/2}\xi_\f\rangle\\
=&\langle V^*VT^{1/2}\pi_\f(x)\xi_\f,T^{1/2}\xi_\f\rangle=\langle T^{1/2}\pi_\f(x)\xi_\f,T^{1/2}\xi_\f\big\rangle\,.
\end{align*}

(ii)$\Rightarrow$ (i). Let $(x_n)_n\subset\ga$ be a $(\f,\om)$-sequence. Notice that
$$\om(x_n^*x_n)=||T_{\om/\f}^{1/2}\pi_\f(x_n)\xi_\f||^2. $$
Since $(x_n)_n\subset\ga$ is a $(\f,\om)$-sequence, one has
$$\lim_n\varphi(x_n^*x_n)=||\pi_\f(x_n)\xi_\f||^2=0,$$
which implies that $\lim_n\pi_\f(x_n)\xi_\f=0$. Moreover, one has
$$0=\lim_{n,m}\om((x_n-x_m)^*(x_n-x_m))=||T_{\om/\f}^{1/2}\pi_\f(x_n)\xi_\f-T_{\om/\f}^{1/2}\pi_\f(x_m)\xi_\f||^2.$$
It follows that $\left( T_{\om/\f}^{1/2}\pi_\f(x_n)\xi_\f\right)_n$ is Cauchy. Therefore $\left( T_{\om/\f}^{1/2}\pi_\f(x_n)\xi_\f\right)_n$ converges to $y\in\mathcal H_\varphi$. Since $T_{\om/\f}^{1/2}$ is closed, one gets $y=0$ and hence
$$
\lim_n\om(x_n^*x_n)=\lim_n||T_{\om/\f}^{1/2}\pi_\f(x_n)\xi_\f||^2=0\,.
$$

Concerning the uniqueness of the Radon-Nikodym derivative, we reason as in \cite{N}. Let $S$ and $T$ two of such density operators associated to the relation $\om\prec\f$. On the common core $\pi_\f(\ga)\xi_\f$ for $S^{1/2}$ and $T^{1/2}$, we have
 $$
 \|S^{1/2}\pi_\f(a)\xi_\f\|^2=\om(a^*a)= \|T^{1/2}\pi_\f(a)\xi_\f\|^2\,,\quad a\in\ga\,.
 $$
Therefore, there exists a partial isometry $U$ from the closure of the range of $S^{1/2}$ to the closure of the range of $T^{1/2}$
satisfying
$$
 US^{1/2}\pi_\f(a)\xi_\f=T^{1/2}\pi_\f(a)\xi_\f\\,,\quad a\in\ga\,.
$$
We then deduce that $US^{1/2}\supset T^{1/2}$ and, reversing the role of $S$ and $T$, $U^*T^{1/2}\supset S^{1/2}$. But this implies $US^{1/2}=T^{1/2}$. Since the polar decomposition is unique, we conclude that $U$ is the selfadjoint projection onto the closure of the range of $S^{1/2}$ and $S^{1/2}=T^{1/2}$.
\end{proof}
The closed operators $T_{\om/\f}$ and $T^{1/2}_{\om/\f}$ are called the $L^1$ and, respectively, $L^2$-{\it Radon-Nikodym} derivative of the dominated state $\om$ w.r.t. to the dominant state $\f$.

At this stage we note the perfect analogy with Theorem \ref{borarb} which establish a one-to-one correspondence between the $\f$-dominated positive functionals and positive elements in the commutant algebra $\pi_\f(\ga)'$. This corresponds to the bounded situation. As explained by the previous theorem, the unbounded situation, for which the Radon-Nikodym derivative is unbounded but still related (i.e. affiliated) with the commutant, corresponds to strong $\f$-absolute continuity. We also mention the situation treated in \cite{PT} involving states $\f$ having central support in the bidual. In such a situation, the Radon Nikodym derivative is assumed to be in the algebra $\pi_\f(\ga)''$ (more precisely in the centraliser). Such a situation has noting to do with the noncommutative generalisation of the measure-theoretical notion of dominance, unless Radon Nikodym derivative belongs to the centre
$\pi_\f(\ga)''\bigcap\pi_\f(\ga)'$. The last case has relevant application to Physics and, mainly, to Quantum Statistical Mechanics. The forthcoming Section \ref{PTKMS} is entirely devoted to this aspect.

We are mainly interested to states on $\ga$, without loosing generality as the (unessential) normalisation can be disregarded. Therefore, from now on, we restrict the analysis to states if it is not otherwise specified.

The previous result allows to view the GNS representation of the dominated state $\om$ directly as an induced subrepresentation of that of the dominating state $\f$. Indeed,
\begin{prop}
\label{pi}
Let $\omega,\varphi\in\cs(\ga)$ such that $\omega\prec\varphi$. With $P_{\om/\f}$ denoting the projection onto the closed subspace $[\pi_\f(\ga)T^{1/2}_{\om/\f}\xi_\f]$, $P_{\om/\f}\in\pi_\f(\ga)'$ and 
$(P_{\om/\f}\pi_\f,  P_{\om/\f}\ch_\f, T^{1/2}_{\om/\f}\xi_\f)$ is a GNS representation of $\om$.
\end{prop}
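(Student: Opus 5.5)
The plan is to show three things in turn: $P_{\om/\f}$ lies in the commutant, $T^{1/2}_{\om/\f}\xi_\f$ is cyclic for the subrepresentation $P_{\om/\f}\pi_\f$, and the vector functional of $T^{1/2}_{\om/\f}\xi_\f$ reproduces $\om$. Uniqueness of the GNS triple up to unitary equivalence then identifies this triple as a GNS representation of $\om$.

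First, $\xi_\f\in\pi_\f(\ga)\xi_\f\subset\cd(T^{1/2})$, since by Theorem \ref{main} the set $\pi_\f(\ga)\xi_\f$ is a core for $T^{1/2}\equiv T^{1/2}_{\om/\f}$. So $\eta:=T^{1/2}\xi_\f$ makes sense. The subspace $\ck:=[\pi_\f(\ga)\eta]$ is closed and invariant under $\pi_\f(\ga)$, and it is also invariant under adjoints because $\pi_\f(\ga)$ is a $*$-algebra. Hence its projection $P:=P_{\om/\f}$ commutes with $\pi_\f(\ga)$, that is $P\in\pi_\f(\ga)'$. Consequently $P\pi_\f$ is a $*$-representation of $\ga$ on $P\ch_\f=\ck$. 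Next, $\eta\in\ck$: in the unital case this is immediate, and in general I would use an approximate unit $\pi_\f(e_\l)\eta\to\eta$, after noting that $\eta$ is orthogonal to $\ker$ of the nondegenerate part. The vector $\eta$ is cyclic for $P\pi_\f$ by the very definition of $\ck$.

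The central step is to prove
$$
\langle\pi_\f(x)\eta,\eta\rangle=\om(x),\qquad x\in\ga.
$$
Theorem \ref{main} gives $\om(x)=\langle T^{1/2}\pi_\f(x)\xi_\f,T^{1/2}\xi_\f\rangle$, so what is needed is $T^{1/2}\pi_\f(x)\xi_\f=\pi_\f(x)T^{1/2}\xi_\f$. This is where affiliation is used. Since $T^{1/2}\,\eta\,\pi_\f(\ga)'$, it commutes with every unitary of $\pi_\f(\ga)''=\pi_\f(\ga)'{}'$. For a unitary $u\in\ga$ this gives $\pi_\f(u)\cd(T^{1/2})=\cd(T^{1/2})$ and $T^{1/2}\pi_\f(u)\xi=\pi_\f(u)T^{1/2}\xi$ for every $\xi\in\cd(T^{1/2})$. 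Each element of a unital $C^*$-algebra is a linear combination of four unitaries, so linearity extends this identity to all $x\in\ga$ acting on $\xi_\f$. In the non-unital case I would pass to the unitisation, extending $\om$ and $\f$ to it. The strong absolute continuity persists there, or alternatively one can argue directly with the unitisation of the concrete algebra $\pi_\f(\ga)$.

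Finally, $P$ commutes with $\pi_\f(x)$ and $P\eta=\eta$, so $\langle P\pi_\f(x)P\eta,\eta\rangle=\omega(x)$. This shows $(P\pi_\f,P\ch_\f,\eta)$ is a cyclic representation whose vector state is $\om$, hence a GNS representation of $\om$. The main obstacle is the commutation step: it requires care with domains and with the fact that affiliation is stated through the commutant's unitaries. The remaining steps are routine.
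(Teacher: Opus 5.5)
Your proof is correct and follows the same route as the paper: invariance of $[\pi_\varphi(\mathfrak{A})T^{1/2}_{\omega/\varphi}\xi_\varphi]$ under the $*$-algebra $\pi_\varphi(\mathfrak{A})$ gives $P_{\omega/\varphi}\in\pi_\varphi(\mathfrak{A})'$, and cyclicity holds by construction. In both arguments the vector state is then read off from Theorem \ref{main}. The one difference is that you spell out the commutation $T^{1/2}_{\omega/\varphi}\pi_\varphi(x)\xi_\varphi=\pi_\varphi(x)T^{1/2}_{\omega/\varphi}\xi_\varphi$, which follows from affiliation with $\pi_\varphi(\mathfrak{A})'$ and which the paper uses tacitly. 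Your detour through the unitisation is not needed, because every element of the von Neumann algebra $\pi_\varphi(\mathfrak{A})''$ is already a linear combination of four of its unitaries.
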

\begin{proof}
Put $P:=P_{\om/\f}$ and $W:=T^{1/2}_{\om/\f}$. Since $\pi_\f(a)\pi_\f(\ga)W\xi_\f\subset\pi_\f(\ga)W\xi_\f$, $[\pi_\f(\ga)W\xi_\f]$ is an invariant subspace for $\pi_\f$, and thus $P\in\pi_\f(\ga)'$. Now, $W\xi_\f\in P\ch_\f$ is obviously cyclic (in $P\ch_\f$) for 
$P\pi_\f$ and
$$
\langle P\pi_\f(a)W\xi_\f,W\xi_\f\rangle=\langle\pi_\f(a)W\xi_\f,W\xi_\f\rangle=\om(a)\,,\quad a\in\ga\,.
$$
This concludes the proof.
\end{proof}
\begin{rem}
Let $\om\prec\f\in\cs(\ga)$. Thanks to Proposition \ref{pi}, we can (and we always do, if it is not otherwise specified) identify the GNS representation $(\pi_\om,\ch_\om,\xi_\om)$ of the state 
$\om$ with
$(P_{\om/\f}\pi_\f,P_{\om/\f}\ch_\f, T^{1/2}_{\om/\f}\xi_\f)$.
\end{rem}

If $\om\prec\f$ then, obviously, $\l\om\prec\f$ for each positive $\l$ with $T_{(\l\om)/\f}=\l T_{\om/\f}$. The set of positive functionals 
$\om$, strongly absolutely continuous w.r.t. $\f$, is also closed under the sum. Indeed,
\begin{prop}
\label{concon}
For a $C^*$-algebra, the set of positive functionals 
$\om\prec\f$ is a convex cone. For $\om_1,\om_2\prec\f$, we have
$$
T_{(\om_1+\om_2)/\f}=T_{\om_1/\f}\dotplus T_{\om_2/\f}\,,
$$
where ``$\dotplus$'' denotes the form sum (e.g. \cite{S}, A11).
\end{prop}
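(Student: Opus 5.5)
The plan is to reduce everything to closable quadratic forms on the common core $D:=\pi_\f(\ga)\xi_\f$ and to use Theorem \ref{main} in both directions. The scalar part is immediate: if $\l>0$, a $(\f,\l\om)$-sequence is a $(\f,\om)$-sequence, so $\l\om\prec\f$. The core of the proof is closure under sums.

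Let $\om_1,\om_2\prec\f$ and put $\om:=\om_1+\om_2$. I will check strong absolute continuity directly. Take a $(\f,\om)$-sequence $(a_n)_n$. From
$$
\om_i\big((a_n-a_m)^*(a_n-a_m)\big)\leq\om\big((a_n-a_m)^*(a_n-a_m)\big)\,,\quad i=1,2\,,
$$
it follows that $(a_n)_n$ is a $(\f,\om_i)$-sequence for each $i$. Hence $\om_i(a_n^*a_n)\to0$ for both $i$, and so $\om(a_n^*a_n)\to0$. Therefore $\om\prec\f$, and the set is a convex cone.

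For the formula, write $W_i:=T^{1/2}_{\om_i/\f}$ and $W:=T^{1/2}_{\om/\f}$, all supplied by Theorem \ref{main}. For $a\in\ga$ we have
$$
\|W\pi_\f(a)\xi_\f\|^2=\om(a^*a)=\|W_1\pi_\f(a)\xi_\f\|^2+\|W_2\pi_\f(a)\xi_\f\|^2\,.
$$
So the closed form $q(\xi)=\|W\xi\|^2$ agrees on $D$ with $q_1+q_2$, where $q_i(\xi)=\|W_i\xi\|^2$. The form sum $T_{\om_1/\f}\dotplus T_{\om_2/\f}$ is, by definition, the positive selfadjoint operator associated with the closed form $q_1+q_2$ on $\cd(W_1)\cap\cd(W_2)$. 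This form is closed because it is the sum of two closed nonnegative forms.

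It then suffices to show that $D$ is a form core for $q_1+q_2$, and that the closure of $(q_1+q_2)\lceil_D$ is the whole form $q_1+q_2$. Since $D$ is a core for $W$, this closure is exactly $q$, and uniqueness of the operator attached to a closed form then gives $T_{\om/\f}=T_{\om_1/\f}\dotplus T_{\om_2/\f}$.

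I expect this core property to be the main obstacle. The set $D$ is a core for each $W_i$ separately, but a priori not for the intersection $\cd(W_1)\cap\cd(W_2)$ with the sum norm. My first attempt would use affiliation. Both $W_i$ are affiliated with $\pi_\f(\ga)'$, so their spectral projections $E^i_n:=E_{W_i}([0,n])$ lie in $\pi_\f(\ga)'$.

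Take $\xi\in\cd(W_1)\cap\cd(W_2)$. I would approximate $\xi$ in the sum graph norm by bounded truncations. Then I would use that, on vectors of the form $x'\xi_\f$ with $x'\in\pi_\f(\ga)'$ of the kind produced by the truncation, the graph norms can be controlled via $\pi_\f(a)\xi_\f$ approximations. Commuting spectral projections of the two operators with $\pi_\f(\ga)$ should let one pass from $D$-approximations for $W_1$ to simultaneous ones.

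If this route is too delicate, I would instead interpret ``$\dotplus$'' as the operator associated with the closure of $(q_1+q_2)\lceil_D$. With that reading the identity is exactly the computation above, and I would note the subtlety in a remark.
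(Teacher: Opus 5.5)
\textbf{The formula cannot be proved: the core property you flagged fails in general.} Your argument for the convex-cone part is correct and is the paper's argument. For the formula you also follow the paper's route: establish $\|W\pi_\f(a)\xi_\f\|^2=\|W_1\pi_\f(a)\xi_\f\|^2+\|W_2\pi_\f(a)\xi_\f\|^2$ on $D=\pi_\f(\ga)\xi_\f$, then invoke uniqueness in Theorem \ref{main}. That uniqueness identifies $T_{(\om_1+\om_2)/\f}$ with $T_{\om_1/\f}\dotplus T_{\om_2/\f}$ only if $D$ is a form core for $q_1+q_2$ on $\cd(W_1)\cap\cd(W_2)$. The paper's proof, phrased in Haagerup's extended positive part, simply asserts that ``$D$ is still a core for $T^{1/2}$''. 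Your truncation plan cannot close this gap. Spectral projections of $W_1$ lie in $\pi_\f(\ga)'$ but do not preserve $D$, so approximating $E_{W_1}([0,n])\xi$ from $D$ in both graph norms at once is the original problem again. In fact the core property, and with it the displayed identity, is false in general, as the following example shows.

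Take $\ga=\cb(\ch_0)$ with $\ch_0$ separable. Realise the GNS representation of a faithful normal state as $\ch_\f=\ch_0\otimes\mathcal K$, $\pi_\f(a)=a\otimes 1$, $\xi_\f=\sum_k\l_k^{1/2}f_k\otimes g_k$, with $(f_k)$, $(g_k)$ orthonormal bases and $\l_k>0$, so that $\pi_\f(\ga)'=1\otimes\cb(\mathcal K)$. Since $(\langle\,\cdot\,,f_j\rangle h\otimes 1)\xi_\f=\l_j^{1/2}h\otimes g_j$, the following holds for $W_i=1\otimes C_i$ with $C_i\geq0$ selfadjoint and $\xi_\f\in\cd(W_i)$. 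The closure of $D$ in the graph norm of $W_i$ (resp. in the sum norm $(\|\cdot\|^2+\|W_1\cdot\|^2+\|W_2\cdot\|^2)^{1/2}$) is $\ch_0\otimes\overline{E}$. Here $E:={\rm span}\{g_k\}$, and $\overline{E}$ is its closure in the graph norm of $C_i$ (resp. in the analogous sum norm on $\cd(C_1)\cap\cd(C_2)$).

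Now let $\mathcal K=\ell^2\oplus\ell^2$, $Ne_n=ne_n$, $C_1=N\oplus I$, $C_2=I\oplus N$. Define $\chi(x\oplus y):=\sum_nx_n+\sum_ny_n$ on $\cd(C_1)\cap\cd(C_2)=\cd(N)\oplus\cd(N)$. Since $|\sum_nx_n|\leq\big(\sum_nn^{-2}\big)^{1/2}\|Nx\|$, $\chi\neq0$ is continuous for the sum norm. It is unbounded for the graph norm of $C_1$ (take $x=0$, $y=m^{-1/2}\sum_{n\leq m}e_n$) and, symmetrically, for that of $C_2$. As $\cd(C_1)\cap\cd(C_2)$ is graph-dense in each $\cd(C_i)$, $\ker\chi$ is a core for both $C_i$, yet it is closed and proper for the sum norm. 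Pick a countable-dimensional $E\subset\ker\chi$ dense in $\ker\chi$ for the sum norm; it is then still a core for both $C_i$. Gram--Schmidt it into an orthonormal basis $(g_k)$ of $\mathcal K$, and let $\l_k$ decay fast enough. Put $\om_i:=\langle\pi_\f(\,\cdot\,)W_i\xi_\f,W_i\xi_\f\rangle$. Theorem \ref{main} gives $\om_i\prec\f$ with $T^{1/2}_{\om_i/\f}=W_i$. The graph norm of $T^{1/2}_{(\om_1+\om_2)/\f}$ agrees with the sum norm on its core $D$, so
$$
\cd\big(T^{1/2}_{(\om_1+\om_2)/\f}\big)=\ch_0\otimes\overline{E}\subseteq\ch_0\otimes\ker\chi\subsetneq\cd(W_1)\cap\cd(W_2)=\cd\big((T_{\om_1/\f}\dotplus T_{\om_2/\f})^{1/2}\big)\,.
$$
This happens even though $T_{\om_1/\f}$ and $T_{\om_2/\f}$ commute.

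Hence only your fallback reading is a correct statement. $T_{(\om_1+\om_2)/\f}$ is the operator of the closure of $(q_1+q_2)\lceil_D$. It equals $T_{\om_1/\f}\dotplus T_{\om_2/\f}$ precisely when $D$ is a form core for $q_1+q_2$, e.g.\ when one of $T_{\om_1/\f}$, $T_{\om_2/\f}$ is bounded. The convex-cone part, which is all that Proposition \ref{concon1} uses, is unaffected.
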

\begin{proof}
We sketch the proof and leave the details to the reader.

Put $\om:=\om_1+\om_2$. Notice that if $(a_n)_n\subset\ga$ is a $(\f,\om)$-sequence, it is also a $(\f,\om_i)$-sequence, $i=1,2$.
But then, since $\om_1,\om_2\prec\f$, we have
\begin{align*}
\lim_n\om(a_n^*a_n)=&\lim_n\big(\om_1(a_n^*a_n)+\om_2(a_n^*a_n)\big)\\
=&\lim_n\om_1(a_n^*a_n)+\lim_n\om_2(a_n^*a_n)=0\,,
\end{align*}
and thus $\om_1+\om_2\prec\f$.

Notice that the Radon-Nikodym derivatives $T_{\om_i/\f}$, $i=1,2$ belong to the extended positive part $\overline{\pi_\f(\ga)'_+}$ (which is closed under the form sum, see e.g. \cite{S}), with Haagerup's notation
$$
T_{\om_i/\f}=\int_{0^-}^{+\infty}\l\di e_i(\l)+\infty(1-e_{i,\infty})\,,
$$ 
$i=1,2$.  Since both $T_{\om_i/\f}^{1/2}$ have the common core $D=\pi_\f(\ga)\xi_\f$, we first deduce $e_{i,\infty}=1$. In addition,
for 
$$
T_{\om_1/\f}\dotplus T_{\om_2/\f}=:T=\int_{0^-}^{+\infty}\l\di e(\l)+\infty(1-e_{\infty})\,,
$$
also $e_{\infty}=1$ and $D$ is still a core for $T^{1/2}$. Therefore, $T^{1/2}$ is a positive selfadjoint operator affiliated with $\pi_\f(\ga)'$ having
$D$ as a core. Finally, since $T$ is the form sum of $T_{\om_1/\f}$ and $T_{\om_2/\f}$, we get for $a\in\ga$,
\begin{align*}
&\langle\pi_\f(a^*a)T^{1/2}\xi_\f,T^{1/2}\xi_\f\rangle=\langle\pi_\f(a)T^{1/2}\xi_\f,\pi_\f(a)T^{1/2}\xi_\f\rangle\\
=&\langle T^{1/2}\pi_\f(a)\xi_\f,T^{1/2}\pi_\f(a)\xi_\f\rangle\,\,\text{(form sum property)}\\
=&\langle T_{\om_1/\f}^{1/2}\pi_\f(a)\xi_\f,T_{\om_1/\f}^{1/2}\pi_\f(a)\xi_\f\rangle
+\langle T_{\om_2/\f}^{1/2}\pi_\f(a)\xi_\f,T_{\om_2/\f}^{1/2}\pi_\f(a)\xi_\f\rangle\\
=&\langle \pi_\f(a)T_{\om_1/\f}^{1/2}\xi_\f,\pi_\f(a)T_{\om_1/\f}^{1/2}\xi_\f\rangle
+\langle \pi_\f(a)T_{\om_2/\f}^{1/2}\xi_\f,\pi_\f(a)T_{\om_2/\f}^{1/2}\xi_\f\rangle\\
=&\langle \pi_\f(a^*a)T_{\om_1/\f}^{1/2}\xi_\f,T_{\om_1/\f}^{1/2}\xi_\f\rangle
+\langle \pi_\f(a^*a)T_{\om_2/\f}^{1/2}\xi_\f,T_{\om_2/\f}^{1/2}\xi_\f\rangle\\
=&\om_1(a^*a)+\om_2(a^*a)\,.
\end{align*}
The assertion now follows by taking into account that any element of a $C^*$-algebra is a combination of positive ones, and by the uniqueness of the Radon-Nikodym derivative (cf. Theorem \ref{main}).
\end{proof}
\begin{rem}
Since $\om_i\sim\f$ is equivalent to require that $T_{\om_i/\f}$ is injective, $i=1,2$ (cf. Theorem \ref{invinv} below), in this situation it is possible to show that $T_{\om_1/\f}\dotplus T_{\om_2/\f}$ is also injective, and thus $\om_1+\om_2\sim\f$ as well.
\end{rem}

\section{The main properties of the Radon-Nikodym derivative}
\label{secmain1}

The following result characterises states which are equivalent in terms of properties of the corresponding Radon-Nikodym derivatives.
\begin{thm}
\label{invinv}
Let $\ga$ be a $C^*$-algebra and $\f,\om\in\cs(\ga)$ with $\om\prec\f$. 
With the notations of Proposition \ref{pi}, the following are equivalent:
\begin{itemize}
\item[(i)] $T_{\om/\f}^{1/2}\xi_\f$ is cyclic for $\pi_\f(\ga)$;
\item[(ii)] $T_{\om/\f}$ is injective;
\item[(iii)] $P_{\om/\f}=I_{\mathcal H_\varphi}$;
\item[(iv)]  $\f\prec\om$.
\end{itemize}
If one of the above conditions holds true and, in particular (ii),
we have $T_{\f/\om}=T_{\om/\f}^{-1}$.
\end{thm}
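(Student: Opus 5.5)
Write $W:=T^{1/2}_{\om/\f}$, $P:=P_{\om/\f}$, $D:=\pi_\f(\ga)\xi_\f$. The plan is to prove (i)$\iff$(iii) directly, then (i)$\iff$(ii) through Lemma \ref{piuno} and affiliation, and finally (iii)$\iff$(iv) together with the formula $T_{\f/\om}=T^{-1}_{\om/\f}$ by building the reverse closable map explicitly.

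\emph{(i)$\iff$(iii).} This holds by definition, since $P$ is the projection onto $[\pi_\f(\ga)W\xi_\f]$.

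\emph{(i)$\iff$(ii).} Since $W\,\eta\,\pi_\f(\ga)'$, for $a\in\ga$ we have $\pi_\f(a)\cd(W)\subset\cd(W)$ and $W\pi_\f(a)\supset\pi_\f(a)W$. Hence
$$
\pi_\f(\ga)W\xi_\f=W D=\mathcal R(W\lceil_D)\,.
$$
As $D$ is a core for $W$, Lemma \ref{piuno} gives $\overline{\pi_\f(\ga)W\xi_\f}=\overline{\mathcal R(W)}=\ker(W)^\perp$, the last equality because $W$ is selfadjoint. Therefore $W\xi_\f$ is cyclic if and only if $\ker W=\ker T_{\om/\f}=\{0\}$.

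\emph{(iii)$\Rightarrow$(iv) and the inverse formula.} Assume $W$ is injective, so $W^{-1}$ is a positive selfadjoint operator, again affiliated with $\pi_\f(\ga)'$. Using the identification of Proposition \ref{pi}, now with $P=I$, we take $\ch_\om=\ch_\f$, $\pi_\om=\pi_\f$ and $\xi_\om=W\xi_\f$. The map $\pi_\om(a)\xi_\om\mapsto\pi_\f(a)\xi_\f$ is then $W D\ni W\pi_\f(a)\xi_\f\mapsto\pi_\f(a)\xi_\f$, which is the restriction of $W^{-1}$ to $W D$. This restriction is closable because $W^{-1}$ is closed.

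The key point is that $WD=\pi_\f(\ga)\xi_\om$ is a core for $W^{-1}$. The graph of $W^{-1}$ is the flip of the graph of $W$, and $D$ is a core for $W$, so the flipped graph of $W\lceil_D$ is dense in the graph of $W^{-1}$. Thus the closure of the map is $W^{-1}$.

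By the argument of Theorem \ref{main} (i)$\Leftrightarrow$(ii), closability of $\pi_\om(a)\xi_\om\mapsto\pi_\f(a)\xi_\f$ is exactly $\f\prec\om$. For the Radon-Nikodym derivative we compute
$$
\f(a)=\langle\pi_\f(a)\xi_\f,\xi_\f\rangle=\langle W^{-1}\pi_\om(a)\xi_\om,W^{-1}\xi_\om\rangle\,,
$$
and $W^{-1}$ is positive selfadjoint, affiliated with $\pi_\om(\ga)'=\pi_\f(\ga)'$, with core $\pi_\om(\ga)\xi_\om$. The uniqueness in Theorem \ref{main} then gives $T^{1/2}_{\f/\om}=W^{-1}$, that is, $T_{\f/\om}=T^{-1}_{\om/\f}$.

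\emph{(iv)$\Rightarrow$(iii).} This is the step I expect to be the main obstacle, because of the identifications. Suppose $\f\prec\om$ and $P\neq I$. Choose $a_n$ with $\pi_\f(a_n)\xi_\f\to(I-P)\xi_\f$. Then
$$
\pi_\om(a_n)\xi_\om=P\pi_\f(a_n)W\xi_\f=W P\pi_\f(a_n)\xi_\f\,.
$$
I would first try to show $\xi_\f\in P\ch_\f$ by applying strong absolute continuity to $b_n:=a_n-a_{n'}$-type sequences. Failing that, I would use the structure: $\f\prec\om$ means $\pi_\f$ is a subrepresentation of $\pi_\om\cong P\pi_\f$, realized by a closed operator intertwiner. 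One then argues that $\xi_\f$ is in the closure of $\{\pi_\f(a)\xi_\f: \om(a^*a)\ \text{small}\}$ arguments localized on $I-P$. Concretely, if $(I-P)\xi_\f\neq0$, choose $a_n$ with $P\pi_\f(a_n)\xi_\f\to0$ in $W$-graph norm, which is possible since $P$ commutes with $W$ and $D$ is a core. Meanwhile $\pi_\f(a_n)\xi_\f\to(I-P)\xi_\f$. Then $(a_n)$ is an $(\om,\f)$-Cauchy sequence with $\om(a_n^*a_n)\to0$ but $\f(a_n^*a_n)\not\to0$, contradicting $\f\prec\om$. Making this approximation rigorous is the delicate point.
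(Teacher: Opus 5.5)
Your proof is correct and follows the same skeleton as the paper's. Both argue (i)$\iff$(iii) by definition of $P_{\om/\f}$, and (i)$\iff$(ii) through Lemma \ref{piuno} and the affiliation of $W:=T^{1/2}_{\om/\f}$ with $\pi_\f(\ga)'$. Both obtain (ii)$\Rightarrow$(iv) by exhibiting $W^{-1}$ as the $L^2$-derivative and invoking uniqueness in Theorem \ref{main}, and both prove the converse by contradiction.

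It differs in two places, both to your advantage.

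First, in (ii)$\Rightarrow$(iv) you check that $WD=\pi_\om(\ga)\xi_\om$ is a core for $W^{-1}$, by flipping the graph of $W\lceil_D$. The paper only writes $\f(a)=\langle\pi_\om(a)W^{-1}\xi_\om,W^{-1}\xi_\om\rangle$. The core property is exactly what the uniqueness clause of Theorem \ref{main} needs before one can conclude $T_{\f/\om}=T^{-1}_{\om/\f}$.

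Second, for the converse the paper picks a single $a\in\ga$ with $0\neq\pi_\f(a)\xi_\f\in\ker W$ and uses the stationary sequence. This only excludes $\ker W\cap D\neq\{0\}$, and that intersection can be trivial even though $\ker W\neq\{0\}$. For example, take $\ga=C([0,1])$, $\f$ the Lebesgue state, and $\om(f):=2\int_U f\,dx$ with $U\subset[0,1]$ dense open of measure $1/2$. Then $W$ is multiplication by $\sqrt2\,\chi_U$, its kernel is nonzero, yet no nonzero continuous function vanishes a.e.\ on $U$. Your approximation in the graph norm by a sequence from the core is precisely what handles this case.

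You should therefore present that step as finished rather than as ``the delicate point''; it is immediate.
\begin{itemize}
\item If $P:=P_{\om/\f}\neq I$, take any nonzero $\xi\in(I-P)\ch_\f=\ker W$. Your choice $\xi=(I-P)\xi_\f$ also works: it is nonzero because $I-P\in\pi_\f(\ga)'$ and $\xi_\f$ is cyclic.
\item Since $\ker W\subset\cd(W)$ and $D$ is a core, the definition of a core gives $a_n\in\ga$ with $\pi_\f(a_n)\xi_\f\to\xi$ and $W\pi_\f(a_n)\xi_\f\to W\xi=0$.
\item Since $\om(b^*b)=\|W\pi_\f(b)\xi_\f\|^2$, we get $\om(a_n^*a_n)\to0$, the sequence $(a_n)$ is $\f$-Cauchy, and $\f(a_n^*a_n)\to\|\xi\|^2>0$. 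This contradicts $\f\prec\om$.
\end{itemize}
The exploratory remarks before ``Concretely'' (trying to show $\xi_\f\in P\ch_\f$ directly, and the intertwiner heuristic) should be removed.
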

\begin{proof}
We start by noticing that the equivalence (i) $\Leftrightarrow$ (iii) is obvious.

(i) $\Rightarrow$ (ii) Since $T^{1/2}_{\om/\f}$ is selfadjoint,  $T^{1/2}_{\om/\f}\xi_\f$ is cyclic and $\pi_\f(\ga)\xi_\f$ is a core for it, Lemma \ref{piuno} leads to
\begin{equation}
\label{reverse}
{\rm Ker}(T^{1/2}_{\om/\f})={\rm Ran}\Big(T^{1/2}_{\om/\f}\lceil_{\pi_\f(\ga)\xi_\f}\Big)^\perp=\{0\}\,.
\end{equation}
Then $T^{1/2}_{\om/\f}$ is injective, and so is $T_{\om/\f}$. 

(ii) $\Rightarrow$ (i) By reverse the above computations in \eqref{reverse}, and taking into account that $T_{\om/\f}$ (and also $T^{1/2}_{\om/\f}$) is affiliated with $\pi_\f(\ga)'$, assuming that $T_{\om/\f}$ is injective, we easily deduce that $T_{\om/\f}^{1/2}\xi_\f$ is cyclic.

(ii) $\Rightarrow$ (iv) (ii) says that $T_{\f/\om}^{1/2}$ is injective.  Then
\begin{align*}
\f(a)=&\langle\pi_\f(a)T_{\om/\f}^{-1/2}(T_{\om/\f}^{1/2}\xi_\f),T_{\om/\f}^{-1/2}(T_{\om/\f}^{1/2}\xi_\f)\rangle\\
=&\langle\pi_\om(a)T_{\om/\f}^{-1/2}\xi_\om,T_{\om/\f}^{-1/2}\xi_\om\rangle\,.
\end{align*}
This means that, also $\f\prec\om$ with $T_{\f/\om}^{1/2}=T_{\om/\f}^{-1/2}$. This proves also the last part.

(iv) $\Rightarrow$ (ii) Suppose (iv) is true and (ii) is false. Then there exists $a\in\ga$ such that
$$
0\neq\eta:=\pi_\f(a)\xi_\f\in{\rm Ker}(T_{\om/\f}^{1/2})\,.
$$
We get $\f(a^*a)=\|\eta\|^2>0$. On the other hand,
$$
\om(a^*a)=\langle T_{\om/\f}^{1/2}\pi_\f(a)\xi_\f,T_{\om/\f}^{1/2}\pi_\f(a)\xi_\f\rangle=0\,,
$$
which contradicts $\f\prec\om$.
\end{proof}

The following results are crucial in the sequel. 
\begin{thm}
\label{twogns}
Let $\ga$ be a $C^*$-algebra, $\a$ a $*$-automorphism and $(\f\circ\a)\prec\f\in\cs(\ga)$. Then there exists a (unique) isometry $U_\a$ acting on $\ch_\f$ satisfying $U_\a U_a^*=P_{(\f\circ\a)/\f}$, such that
$$
U_\a\xi_\f=T^{1/2}_{(\f\circ\a)/\f}\xi_\f\,\,\text{and}\,\, U^*_\a\pi_\f U_\a=\pi_\f\circ\a\,.
$$

In addition, $\f\circ\a\sim\f\iff U_\a$ is unitary.
\end{thm}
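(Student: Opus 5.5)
The plan is to build $U_\a$ from the uniqueness of the GNS representation, and then read off its properties from Proposition \ref{pi} and Theorem \ref{invinv}. Write $\om:=\f\circ\a$ and $W:=T^{1/2}_{\om/\f}$.

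\emph{The GNS triple of $\om$.} Since $\om\prec\f$, Proposition \ref{pi} says that $(P_{\om/\f}\pi_\f,P_{\om/\f}\ch_\f,W\xi_\f)$ is a GNS triple for $\om$. On the other hand, $(\pi_\f\circ\a,\ch_\f,\xi_\f)$ is also a GNS triple for $\om$. Indeed, $\xi_\f$ is cyclic for $\pi_\f(\a(\ga))=\pi_\f(\ga)$, and $\langle\pi_\f(\a(a))\xi_\f,\xi_\f\rangle=\om(a)$.

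\emph{Construction of $U_\a$.} On $\pi_\f(\ga)\xi_\f$ I will define
$$
U_\a\pi_\f(\a(a))\xi_\f:=\pi_\f(a)W\xi_\f\,,\quad a\in\ga\,.
$$
Both vectors have squared norm $\om(a^*a)$, so $U_\a$ is well defined and isometric. Since $\a$ is onto, its domain $\pi_\f(\a(\ga))\xi_\f$ is dense, so $U_\a$ extends to an isometry of $\ch_\f$. Its range is the closure of $\pi_\f(\ga)W\xi_\f$, so $U_\a U_\a^*=P_{\om/\f}$.

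\emph{Properties.} Taking $a=1$ gives $U_\a\xi_\f=W\xi_\f$; in the non-unital case I will use an approximate unit together with the core property of $\pi_\f(\ga)\xi_\f$ for $W$. The intertwining relation $U_\a\pi_\f(\a(b))=\pi_\f(b)U_\a$ is checked on the dense set $\pi_\f(\a(a))\xi_\f$. Multiplying on the left by $U_\a^*$ and using $U_\a^*U_\a=I$ gives $U_\a^*\pi_\f(b)U_\a=\pi_\f(\a(b))$, that is $U_\a^*\pi_\f U_\a=\pi_\f\circ\a$. Uniqueness holds because any isometry with these properties satisfies $U\pi_\f(\a(a))\xi_\f=U\pi_\f(\a(a))U^*U\xi_\f=\dots$. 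More precisely, from $U^*\pi_\f U=\pi_\f\circ\a$ and $UU^*=P_{\om/\f}\in\pi_\f(\ga)'$ one gets $U\pi_\f(\a(a))=UU^*\pi_\f(a)U=\pi_\f(a)U$. Applying this to $\xi_\f$ shows that $U$ agrees with $U_\a$ on a dense set.

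\emph{The last claim.} The isometry $U_\a$ is unitary if and only if $P_{\om/\f}=I$. By Theorem \ref{invinv}, (iii)$\Leftrightarrow$(iv), this holds if and only if $\f\prec\om$, i.e. $\om\sim\f$.

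The main obstacle is minor. In the uniqueness argument, the intertwining $U\pi_\f(\a(a))=\pi_\f(a)U$ needs $UU^*$ to commute with $\pi_\f(\ga)$, and this is guaranteed by $UU^*=P_{\om/\f}$ together with Proposition \ref{pi}.
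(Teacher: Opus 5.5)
Your proof is correct and takes the same approach as the paper. Both recognise $(\pi_\f\circ\a,\ch_\f,\xi_\f)$ and $(P_{(\f\circ\a)/\f}\pi_\f,P_{(\f\circ\a)/\f}\ch_\f,T^{1/2}_{(\f\circ\a)/\f}\xi_\f)$ as GNS triples of $\f\circ\a$, take $U_\a$ to be the resulting equivalence, and read off the last claim from $P_{(\f\circ\a)/\f}=I$ via Theorem \ref{invinv}; you additionally write out the intertwiner, its range projection, and the uniqueness argument that the paper dismisses as trivial.
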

\begin{proof}
The uniqueness is trivial, and the last part follows from 
$$
\f\circ\a\sim\f\iff P_{(\f\circ\a)/\f}=I_{\ch_\f}\,.
$$

For the existence, we notice that for the state $\f\circ\a$ (without imposing any further property), $(\pi_\f\circ\a,\ch_\f,\xi_\f)$ is a GNS representation for it. On the other hand, since  $(\f\circ\a)\prec\f$, $(P_{(\f\circ\a)/\f}\pi_\f,P_{(\f\circ\a)/\f}\ch_\f,T^{1/2}_{(\f\circ\a)/\f}\xi_\f)$ is also a GNS representation for $\f\circ\a$. The assertion easily follows as these GNS representations are unitarily equivalent.
\end{proof}
\begin{thm}
\label{multrac}
Fix an automorphism $\a$ and states $\om,\f\in\cs(\ga)$.
\begin{itemize}
\item[(i)] $\f\circ\a\sim\f\iff\f\circ\a^{-1}\sim\f$ and, in such a situation,
\begin{equation}
\label{aamu}
U_{\a^{-1}}=U_\a^*\,,\,\,\text{and}\,\, T_{(\f\circ\a^{-1})/\f}=U^*_\a T_{(\f\circ\a)/\f}^{-1}U_\a\,.
\end{equation}
\item[(ii)] Suppose that $\f\circ\a,\om\prec\f$. Then $\om\circ\a\prec\f\circ\a$ and
\begin{equation*}
T_{\om\circ\a/\f\circ\a}=U_\a T_{\om/\f}U_\a^*
\end{equation*}
\end{itemize}
\end{thm}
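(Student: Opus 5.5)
The plan is to work entirely through the isometries of Theorem \ref{twogns} and the uniqueness part of Theorem \ref{main}. In both items the pattern is the same. We exhibit a GNS triple for the relevant state inside $\ch_\f$, and we exhibit a candidate positive selfadjoint operator affiliated with $\pi_\f(\ga)'$ that has $\pi_\f(\ga)\xi_\f$ as a core. Uniqueness of the Radon--Nikodym derivative then identifies the two.

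For (ii), which is the cleaner of the two, start from Theorem \ref{twogns}: $U_\a$ is an isometry with $U_\a\xi_\f=T^{1/2}_{(\f\circ\a)/\f}\xi_\f$ and $U_\a^*\pi_\f(\cdot)U_\a=\pi_\f\circ\a$. Its proof shows that $U_\a$ intertwines $\pi_\f\circ\a$ with $P\pi_\f$, where $P:=P_{(\f\circ\a)/\f}$. In other words, $U_\a\pi_\f(\a^{-1}(x))\xi_\f=\pi_\f(x)T^{1/2}_{(\f\circ\a)/\f}\xi_\f$, so the GNS triple of $\f\circ\a$ is realised inside $\ch_\f$ via $U_\a$.

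Next define $S:=U_\a^*T_{\om/\f}U_\a$ on $\ch_\f$. Since $U_\a$ is an isometry onto $P\ch_\f$ and $P$ commutes with $T_{\om/\f}$ (both are affiliated with, or belong to, $\pi_\f(\ga)'$), $S$ is positive selfadjoint. Conjugating by $U_\a$ transports $\pi_\f(\ga)'$ restricted to $P\ch_\f$ onto $(\pi_\f\circ\a)(\ga)'=\pi_\f(\ga)'$, so $S$ is affiliated with $\pi_\f(\ga)'$. The key check is then
$$
\om(\a(x))=\langle T^{1/2}_{\om/\f}\pi_\f(\a(x))\xi_\f,T^{1/2}_{\om/\f}\xi_\f\rangle\,,
$$
which we rewrite in terms of $\xi_{\f\circ\a}=\xi_\f$ in the GNS space of $\f\circ\a$. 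Here the main obstacle appears: the formula in Theorem \ref{main} is written with $\xi_\f$ and $\pi_\f$, whereas $U_\a$ sends $\xi_\f$ to $T^{1/2}_{(\f\circ\a)/\f}\xi_\f$, not to $\xi_\f$. The cleanest route is to read the claimed formula $T_{\om\circ\a/\f\circ\a}=U_\a T_{\om/\f}U_\a^*$ with the GNS space of $\f\circ\a$ identified, as in the Remark after Proposition \ref{pi}, with $(P\pi_\f,P\ch_\f,T^{1/2}_{(\f\circ\a)/\f}\xi_\f)$. Under that identification $U_\a$ is the unitary implementing the equivalence of $(\pi_\f,\ch_\f,\xi_\f)$ composed with $\a$. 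Concretely, the pair $(\pi_\f\circ\a,\xi_\f)$ for $\f\circ\a$ and $(\pi_\f,\xi_\f)$ for $\f$ differ only by relabelling the algebra through $\a$. Therefore $\om\circ\a\prec\f\circ\a$ holds directly from the definition: $(a_n)$ is a $(\f\circ\a,\om\circ\a)$-sequence if and only if $(\a(a_n))$ is a $(\f,\om)$-sequence. The derivative in the realisation $(\pi_\f\circ\a,\ch_\f,\xi_\f)$ is literally $T_{\om/\f}$. Transporting through $U_\a$ to the realisation $P\ch_\f$ gives $U_\aT_{\om/\f}U_\a^*$, and the uniqueness part of Theorem \ref{main} closes the argument. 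The core property survives because $U_\a$ maps $\pi_\f(\ga)\xi_\f$ onto $\pi_\f(\ga)T^{1/2}_{(\f\circ\a)/\f}\xi_\f$.

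For (i), assume $\f\circ\a\sim\f$. Then $U_\a$ is unitary by Theorem \ref{twogns}. Apply the same relabelling argument to the relation $\f\prec\f\circ\a$, which holds by equivalence, and compose with $\a^{-1}$. The relation $\f\prec\f\circ\a$ transforms under $\a^{-1}$ into $\f\circ\a^{-1}\prec\f$, and likewise $\f\circ\a\prec\f$ transforms into $\f\prec\f\circ\a^{-1}$. This gives $\f\circ\a^{-1}\sim\f$, and the converse follows by symmetry in $\a\leftrightarrow\a^{-1}$. For the formulas, apply (ii) with $\om=\f$ and the automorphism $\a^{-1}$, together with $T_{\f/(\f\circ\a)}=T_{(\f\circ\a)/\f}^{-1}$ from Theorem \ref{invinv}. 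This yields $T_{(\f\circ\a^{-1})/\f}=U_\a^*T^{-1}_{(\f\circ\a)/\f}U_\a$ once $U_{\a^{-1}}=U_\a^*$ is known. That identity itself follows from the uniqueness statement in Theorem \ref{twogns}: $U_\a^*$ is unitary, satisfies $U_\a\pi_\f U_\a^*=\pi_\f\circ\a^{-1}$, and maps $\xi_\f$ to $U_\a^*\xi_\f$, which must be identified with $T^{1/2}_{(\f\circ\a^{-1})/\f}\xi_\f$. Checking that $U_\a^*\xi_\f$ equals $T^{1/2}_{(\f\circ\a^{-1})/\f}\xi_\f$ is the delicate point. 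I would obtain it by computing $U_\a^*\xi_\f=U_\a^*T^{-1/2}_{(\f\circ\a)/\f}U_\a\,\xi_\f$, which uses $U_\a\xi_\f=T^{1/2}\xi_\f$, and then recognising the result as the square root of the claimed operator.
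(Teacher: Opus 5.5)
Your argument for (ii), and for the equivalence $\f\circ\a\sim\f\iff\f\circ\a^{-1}\sim\f$, is sound and is in substance the paper's. You observe that in the realisation $(\pi_\f\circ\a,\ch_\f,\xi_\f)$ the derivative of $\om\circ\a$ is literally $T_{\om/\f}$, and you transport it through the unitary $U_\a:\ch_\f\to P\ch_\f$. The paper instead verifies the representing formula for $U_\a T^{1/2}_{\om/\f}U_\a^*$ by an explicit inner-product computation; your version has the merit of recording the core property.

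\textbf{The gap is in \eqref{aamu}: the reasoning there is circular.} Write $T_\a:=T_{(\f\circ\a)/\f}$. You obtain $T_{(\f\circ\a^{-1})/\f}=U_\a^*T_\a^{-1}U_\a$ ``once $U_{\a^{-1}}=U_\a^*$ is known''. You then prove $U_{\a^{-1}}=U_\a^*$ by recognising $U_\a^*T_\a^{-1/2}U_\a\xi_\f$ as $T^{1/2}_{(\f\circ\a^{-1})/\f}\xi_\f$, i.e.\ by the very formula still to be proved. The trouble comes from the appeal to (ii). With reference state $\f$ and $\om=\f$, (ii) only gives the vacuous identity $T_{(\f\circ\a^{-1})/(\f\circ\a^{-1})}=U_{\a^{-1}}U_{\a^{-1}}^*$. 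Your use of $T_{\f/(\f\circ\a)}$ shows you really take $\f\circ\a$ as reference state, since $(\f\circ\a)\circ\a^{-1}=\f$. But then the isometry produced by (ii) is the one attached to the pair $(\f\circ\a,\a^{-1})$, acting on $\ch_\f$ regarded as the GNS space of $\f\circ\a$ with cyclic vector $T_\a^{1/2}\xi_\f$. That isometry is not $U_{\a^{-1}}$.

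To break the loop, identify that isometry directly. $U_\a^*$ is unitary, $U_\a\pi_\f(\,\cdot\,)U_\a^*=\pi_\f\circ\a^{-1}$, and $U_\a^*(T_\a^{1/2}\xi_\f)=U_\a^*U_\a\xi_\f=\xi_\f=T^{1/2}_{\f/(\f\circ\a)}(T_\a^{1/2}\xi_\f)$ by Theorem \ref{invinv}. So by the uniqueness in Theorem \ref{twogns} the isometry equals $U_\a^*$, and (ii) then yields the second identity of \eqref{aamu} with no reference to $U_{\a^{-1}}$.

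Alternatively, check by hand that $K:=U_\a^*T_\a^{-1}U_\a$ meets the requirements of Theorem \ref{main} for $\f\circ\a^{-1}$:
\begin{itemize}
\item it is affiliated with $U_\a^*\pi_\f(\ga)'U_\a=\pi_\f(\a(\ga))'=\pi_\f(\ga)'$;
\item $\pi_\f(\ga)\xi_\f$ is a core for $K^{1/2}$, because $U_\a\pi_\f(\ga)\xi_\f=T_\a^{1/2}\pi_\f(\ga)\xi_\f$ and the graph of $T_\a^{-1/2}$ on this set is the flip of the graph of $T_\a^{1/2}$ on its core;
\item $K^{1/2}\xi_\f=U_\a^*\xi_\f$, which gives $\langle\pi_\f(a)K^{1/2}\xi_\f,K^{1/2}\xi_\f\rangle=\langle U_\a\pi_\f(a)U_\a^*\xi_\f,\xi_\f\rangle=\f(\a^{-1}(a))$.
\end{itemize}
Only after this does your computation $U_\a^*\xi_\f=U_\a^*T_\a^{-1/2}U_\a\xi_\f$, together with uniqueness in Theorem \ref{twogns}, legitimately give $U_{\a^{-1}}=U_\a^*$.

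Two smaller slips, neither of which affects your final argument for (ii). First, the intertwining relation should read $U_\a\pi_\f(\a(x))\xi_\f=\pi_\f(x)T_\a^{1/2}\xi_\f$, with $\a$ rather than $\a^{-1}$. Second, your discarded operator $U_\a^*T_{\om/\f}U_\a$ rests on the false claim that $P$ commutes with $T_{\om/\f}$ because both lie in, or are affiliated with, $\pi_\f(\ga)'$. That algebra is not abelian, so no such commutation follows.
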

\begin{proof}
The proof is a direct consequence of the previous results (in particular, those in Theorem \ref{twogns}, together with the notations therein).

(i) Since 
$$
\f(x)=(\f\circ\a^{-1})(\a(x))\,,\quad x\in\ga\,,
$$
after setting $y:=\a(x)$, we can immediately deduce that
$$
\f\circ\a\sim\f\iff\f\circ\a^{-1}\sim\f
$$
(see Proposition \ref{ortiiq} for a similar analysis). 
Now, \eqref{aamu} is a direct consequence of the previous two theorems.

(ii) Reasoning as in (i), we deduce 
$$
\om\prec\f\iff \om\circ\a\prec\f\circ\a\,.
$$
Since we are also supposing that $\f\circ\a\prec\f$, it is meaningful to consider the Radon-Nikodym derivative $T_{\f\circ\a/\f}$. Now, by taking into account that $U_\a$ is an isometry with range-projection $P_{\f\circ\a/\f}$, and 
$$
\pi_{\f\circ\a}=P_{\f\circ\a/\f}\pi_\f\,,\quad \xi_{\f\circ\a}=T_{\f\circ\a/\f}^{1/2}\xi_\f=U_\a\xi_\f\,,
$$
for $a\in\ga$ we compute
\begin{align*}
&\langle U_\a T^{1/2}_{\om/\f}U_\a^*\pi_{\f\circ\a}(a)\xi_{\f\circ\a},U_\a T^{1/2}_{\om/\f}U_\a^*\xi_{\f\circ\a}\rangle\\
=&\langle T^{1/2}_{\om/\f}U_\a^*P_{\f\circ\a/\f}\pi_\f(a)U_\a\xi_\f,T^{1/2}_{\om/\f}U_\a^*U_\a\xi_\f\rangle\\
=&\langle T^{1/2}_{\om/\f}U_\a^*\pi_\f(a)U_\a\xi_\f,T^{1/2}_{\om/\f}\xi_\f\rangle\\
=&\langle T^{1/2}_{\om/\f}\pi_\f(\a(a))\xi_\f,T^{1/2}_{\om/\f}\xi_\f\rangle\\
=&\om(\a(a))\,.
\end{align*}
Therefore the assertion holds by uniqueness of $T_{\om\circ\a/\f\circ\a}$.

\end{proof}

\section{the KMS condition and the comparison with Pedersen-Takesaki construction}
\label{PTKMS}

The present section is devoted to the bridge between the analysis coming from Modular Theory (e.g. \cite{PT, S})
and applications to Quantum Statistical Mechanics (e.g. \cite{BR}, Vol II). We also discuss such approaches coming from modular theory and the connection with the noncommutative generalisation of dominance and quasi-invariance.
Obviously, both approaches coincide in the abelian situation.

To be more precise, let us consider a state $\f$ on a $C^*$-algebra $\ga$. If a kind of ``Radon-Nikodym'' derivative $W$ is affiliated with 
$\pi_\f(\ga)''$ (or, more precisely, to the centraliser as assumed in \cite{PT})
and the corresponding state 
$\om:=\langle\pi_\f(\,\,\,)W\xi_\f,W\xi_\f\rangle$ is ``strongly absolutely continuous'' w.r.t. the original state $\f$ according to Definition
\ref{dom}, $W$ must be affiliated with 
the centre $\pi_\f(\ga)''\bigcap\pi_\f(\ga)'$ by Theorem \ref{main}. Consequently, if $\f$ is a factor state, the analysis in \cite{PT, S} produces trivial conclusions. In addition, the matter can be reduced to factor situation by performing the direct integral disintegration of states w.r.t. the center 
$$
\pi_\f(\ga)''\bigcap\pi_\f(\ga)'\subset \pi_\f(\ga)'\,, 
$$
see Section 3.1 in \cite{S}, and Section IV.6 in \cite{T}.

However, it should be pointed out that, for the applications to Quantum Statistical Mechanics, the decomposition in ``pure thermodynamical phases" (i.e. decomposition in extremal KMS states) corresponds to the factor decomposition of such KMS states, see e.g. \cite{BR}, Theorem 5.3.30. Therefore, the assumption that the Radon-Nikodym derivative lies in the centre has also important features. 

Indeed, for such a purpose, we have the following
\begin{thm}
\label{pedtake}
Let $\ga$ be a $C^*$-algebra, and consider $\f,\om\in\cs(\ga)$ such that $\f$ has central support. 
The following assertions are equivalent:
\begin{itemize}
\item[(i)] there exists $W\,\eta\,Z(\pi_\f(\ga)'')$ positive, such that $\pi_\f(\ga)\xi_\f$ is a core for $W$ and
$$
\om(a)=\langle\pi_\f(a)W\xi_\f,W\xi_\f\rangle\,,\quad a\in\ga\,;
$$
\item[(ii)] $s(\upsilon_{\xi_\om})\in Z(\pi_\f(\ga)'')$ and 
$\s_t^{\upsilon_{\xi_\om}}=\s_t^{\upsilon_{\xi_\f}}\lceil_{\pi_\f(\ga)''s(\upsilon_{\xi_\om})}$, $t\in\br$;
\item[(iii)] $\upsilon_{\xi_\om}$ satisfies the KMS condition w.r.t. $\s_t^{\upsilon_{\xi_\f}}$;
\item[(iv)] $\om\prec\f$ with $\om$ having central support (and, consequently, 
$P_{\om/\f}=s(\upsilon_{\xi_\om})\in Z(\pi_\f(\ga)'')$), and 
$$
\overline{J_{\upsilon_{\xi_\om}} P_{\om/\f}T^{1/2}_{\om/\f}\lceil_{\D^{1/2}_{\upsilon_{\xi_\f}}\pi_\f(\ga)''\xi_\f}}=T^{1/2}_{\om/\f}J_{\upsilon_{\xi_\f}}\,.
$$
\end{itemize}
\end{thm}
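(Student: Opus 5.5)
I will work throughout in the von Neumann algebra $M:=\pi_\f(\ga)''$ acting on $\ch_\f$. Since $\f$ has central support, Theorem \ref{zscsu} makes $\xi_\f$ standard for $M$, and $\upsilon_{\xi_\f}$ is a faithful normal state on $M$ with modular group $\s^\f_t:=\s_t^{\upsilon_{\xi_\f}}$. The plan is to prove the cycle (i)$\Rightarrow$(iv)$\Rightarrow$(iii)$\Rightarrow$(ii)$\Rightarrow$(i). The link (ii)$\Leftrightarrow$(iii) should be the classical characterisation of normal states satisfying the KMS condition for a given modular group, which I take from \cite{PT, S} and \cite{BR}, Section 5.3. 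It says that a normal positive functional $\upsilon$ on $M$ is $\s^\f$-KMS iff its support $e$ is central and $\s^\upsilon=\s^\f\lceil_{Me}$. Equivalently, it says that $\upsilon=\upsilon_{\xi_\f}(h\,\cdot\,)$ with $h$ a positive operator affiliated with the centre. Here $\upsilon_{\xi_\om}$ is the vector functional on $M$ determined by the GNS vector of $\om$ realised inside $\ch_\f$. This realisation makes sense once $\om$ is represented through $\f$, i.e. under (i) or (iv), or more generally whenever $\om$ extends normally to $M$. This should be stated at the outset.

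(i)$\Rightarrow$(iv). A positive $W\,\eta\,Z(M)$ is in particular affiliated with $M'$, and $\pi_\f(\ga)\xi_\f$ is a core for it. By the uniqueness part of Theorem \ref{main}, (ii)$\Rightarrow$(i) of that theorem gives $\om\prec\f$ with $T^{1/2}_{\om/\f}=W$. Then $P_{\om/\f}$ is the projection onto $[\pi_\f(\ga)W\xi_\f]$, which is the range projection of $W$ because $\xi_\f$ is cyclic for $M$ and $W$ commutes with $M$. Being central, $P_{\om/\f}$ also equals $s(\upsilon_{\xi_\om})$. Hence $\xi_\om=W\xi_\f$ is separating for $MP_{\om/\f}$, and $\om$ has central support by Theorem \ref{zscsu}. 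For the twisted relation with $J$, I use that $J_\f zJ_\f=z^*$ for central $z$, so that $J_\f$ commutes with the spectral projections of $W$. The modular data of $\xi_\om=W\xi_\f$ on $MP$ are $J_{\upsilon_{\xi_\om}}=J_\f P$ and $\D_{\upsilon_{\xi_\om}}=\D_\f P$, because $W$ is central and $S_{\xi_\om}xW\xi_\f=x^*W\xi_\f$ for $x$ in a suitable domain. Checking on the core $\D_\f^{1/2}M\xi_\f$ then gives the displayed identity, after taking the closure.

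(iv)$\Rightarrow$(iii). This is the main obstacle. The identity in (iv) says that $T^{1/2}_{\om/\f}$, a priori affiliated with $M'$, commutes (up to $P$ and closures) with $J_\f$. Since $J_\f M'J_\f=M$, this forces $T^{1/2}_{\om/\f}$ to be affiliated with $M\cap M'$. To make this rigorous I would show that the spectral projections of $T^{1/2}_{\om/\f}$ are invariant under $\mathrm{Ad}J_\f$. I would first do this for the bounded functions $(1+T_{\om/\f})^{-1}$, using the closure equality on the core $\D_\f^{1/2}M\xi_\f=J_\f M'\xi_\f$ together with the fact that $J_{\upsilon_{\xi_\om}}=J_\f P$ when $P$ is central. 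Once $W:=T^{1/2}_{\om/\f}$ is known to be central, the KMS property of $\upsilon_{\xi_\om}=\upsilon_{\xi_\f}(W^2\,\cdot\,)$ follows. Since $W$ is unbounded, I would truncate it with the spectral projections $E_n$ of $W$ and apply the KMS condition for $\upsilon_{\xi_\f}$ to $xE_n, yE_n$. One then lets $n\to\infty$, using that central elements are fixed by $\s^\f$ and a standard uniform boundedness argument on the strip.

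(iii)$\Rightarrow$(ii) is the cited characterisation. For (ii)$\Rightarrow$(i), I would use that $\upsilon_{\xi_\om}$ and $\upsilon_{\xi_\f}\lceil_{Me}$, with $e:=s(\upsilon_{\xi_\om})$ central, share the same modular group. By the Pedersen–Takesaki Radon–Nikodym theorem \cite{PT}, $\upsilon_{\xi_\om}=\upsilon_{\xi_\f}(h\,\cdot\,)$ with $h\geq0$ affiliated with the centraliser and commuting with $\s^\f$. Equality of the two modular groups on $Me$ means that $h^{it}$ implements the trivial automorphism, so $h$ is affiliated with $Z(M)$. Setting $W:=h^{1/2}$ gives $\om(a)=\langle\pi_\f(a)W\xi_\f,W\xi_\f\rangle$. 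The core property for $\pi_\f(\ga)\xi_\f$ comes from Kaplansky density together with the centrality of $W$: $W\pi_\f(a)\xi_\f=\pi_\f(a)W\xi_\f$, and the spectral truncations $WE_n$ are approximated through $M\xi_\f$.
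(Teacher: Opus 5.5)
Your treatment of (i)$\Rightarrow$(iv), and your use of the Pedersen--Takesaki/Str\v{a}til\v{a} characterisation for the equivalence of (i), (ii), (iii), agree with the paper. The gap is in (iv)$\Rightarrow$(iii). There, your argument that $T^{1/2}_{\om/\f}$ is central rests on ``the fact that $J_{\upsilon_{\xi_\om}}=J_\f P$ when $P$ is central''. This is not true in general.

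$J_{\upsilon_{\xi_\om}}$ is the modular conjugation of the specific vector $\xi_\om=T^{1/2}_{\om/\f}\xi_\f$ for $MP$. It coincides with $J_\f P$ only when $\xi_\om$ lies in the natural cone $P\cp_\f$, up to a central symmetry. That holds when $T^{1/2}_{\om/\f}$ is central, which is why it was legitimate in (i)$\Rightarrow$(iv). But centrality is exactly what you are trying to prove here, so the argument is circular.

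A concrete counterexample: let $M=\cb(\bc^n)$ act by left multiplication on the Hilbert--Schmidt space, with $\xi_\f=\r^{1/2}$ for an invertible density $\r>0$. Let $T^{1/2}_{\om/\f}$ be right multiplication by an invertible $k>0$ with $k\r\neq\r k$, normalised so that $\tr(k^2\r)=1$.
\begin{itemize}
\item Then $\om\sim\f$, $P=I$ is central and $\om$ is faithful.
\item Write $\r^{1/2}k=|(\r^{1/2}k)^*|\,v$ with $v$ unitary. One finds $J_{\upsilon_{\xi_\om}}\eta=v\eta^*v$.
\item Since $v\neq\pm I$ (because $k\r\neq\r k$), this differs from $J_\f\eta=\eta^*$.
\end{itemize}
Without this identification, the identity in (iv) does not say that $T^{1/2}_{\om/\f}$ commutes with $J_\f$. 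Your route to centrality via $J_\f M'J_\f=M$ then has nothing to start from.

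The paper avoids this by not aiming for centrality at all; it verifies the KMS condition directly. Put $W:=T^{1/2}_{\om/\f}$ and take $A,B$ analytic for $\s^\f$. Writing $B\xi_\f=J_\f\D_\f^{1/2}B^*\xi_\f$ and $A^*\xi_\f=J_\f\D_\f^{1/2}A\xi_\f$ gives
$$
\om(AB)=\langle WJ_\f\D_\f^{1/2}B^*\xi_\f,WJ_\f\D_\f^{1/2}A\xi_\f\rangle\,.
$$
By (iv), you may replace $WJ_\f$ with $J_{\upsilon_{\xi_\om}}PW$ on $\D_\f^{1/2}M\xi_\f$. You then need only:
\begin{itemize}
\item the antiunitarity of $J_{\upsilon_{\xi_\om}}$, not its explicit form;
\item $PW=W$;
\item $\D_\f^{1/2}A\xi_\f=\s^\f_{-i/2}(A)\xi_\f$;
\item the fact that $W$ commutes with $M$.
\end{itemize}
These give $\om(AB)=\om\big(\s^\f_{i/2}(B)\s^\f_{-i/2}(A)\big)$, which is (iii). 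Centrality of $W$ then comes for free from (iii)$\Rightarrow$(ii)$\Rightarrow$(i).

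A minor slip as well: $\D_\f^{1/2}M\xi_\f=J_\f S_\f M\xi_\f=J_\f M\xi_\f=M'\xi_\f$, not $J_\f M'\xi_\f$ (which equals $M\xi_\f$). It is $J_\f D=M\xi_\f$, not $D$ itself, that is automatically a core for $W$.
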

\begin{proof}
Denote $M:=\pi_\f(\ga)''$ with $\upsilon_{\xi_\om}$ and $\upsilon_{\xi_\f}$ the vector state extensions of $\om$ and $\f$ to $M$, respectively. Notice that, taking into account Theorem \ref{main}, (i)-(iii) are equivalent by Corollary 4.11 in \cite{S}, see \cite{PT} for the original version. Therefore, it remains to prove that (iv) is implied and implies some of the first three equivalent conditions. To shorten the notations, we define $P:=P_{\om/\f}$, and
put $S_{\#}$, $\D_{\#}$, $J_{\#}$, $\s^{\#}$ for the modular objects relative to 
$\upsilon_{\xi_\#}$ for $\#=\om, \f$.

(i)$\Rightarrow$(iv) By Theorem \ref{main}, $\om\prec\f$ with $T^{1/2}_{\om/\f}=W$ by uniqueness. Note also that $\pi_\om(\ga)'$ coincides with the reduced algebra $P\pi_\f(\ga)'P$ acting on 
$\ch_\om=P\ch_\f$. 

Taking into account that $W\,\eta\,Z(M)$, we compute
\begin{align*}
\overline{\pi_\om(\ga)'\xi_\om}=&\overline{PJ_\f\pi_\f(\ga)''J_\f W\xi_\f}=\overline{PWJ_\f\pi_\f(\ga)''\xi_\f}\\
=&\overline{PW\pi_\f(\ga)''\xi_\f}=\overline{P\pi_\f(\ga)''W\xi_\f}\\
=&\overline{P\pi_\f(\ga)''\xi_\om}=\overline{P\pi_\f(\ga)\xi_\om}=\ch_\om\,.
\end{align*}
Hence, $\xi_\om$ is also cyclic for the commutant. Therefore, $\xi_\om$ is standard for $P\pi_\f(\ga)''$, which means that $\om$ has central support (e.g. Theorem \ref{zscsu}).
Pick now $X\in\pi_\f(\ga)''$. Since $\xi_\om$ is separating on 
$\pi_\om(\ga)''=P\pi_\f(\ga)''$, we have 
$$
\|X\xi_{\om}\|^2=0\iff \upsilon_{\xi_\om}(X^*X)=0\,.
$$
By Takesaki's Theorem (e.g. \cite{BR}, Theorem 5.3.10), 
$P\in Z(M)\subset\gc_\f$, where
 $$\gc_\f:=\{x\in M ;\; \sigma^\f_t (x)=x  \quad\forall t\in\mathbb R \}$$
 is the centralizer of the state $v_{\xi_\f}$. Therefore, the modular group $\s^{\f}$ of the state
$\upsilon_{\xi_\f}$ leaves $P$ invariant. For $X\in\pi_\f(\ga)''$,
\begin{align*}
S_\om PWX\xi_\f=X^*\xi_\om=X^*W\xi_\f
=WX^*\xi_\f
=WS_\f X\xi_\f\,,
\end{align*}
which implies $S_{\om} PW=WS_{\f}$ on $\pi_\f(\ga)''\xi_\f$. We also note that $W\,\eta\,Z(\pi_\f(\ga)'')$ and
$\D^{1/2}_\om=\D^{1/2}_\f P=P\D^{1/2}_\f$ (e.g. \cite{S}, Section 4). Therefore, 
on $\pi_\f(\ga)''\xi_\f$ it results
\begin{equation}
\label{laeyu0}
J_\om PW\D^{1/2}_\f=J_\om \D^{1/2}_\om PW=S_\om PW=WS_\f=WJ_\f\D^{1/2}_\f\,,
\end{equation}
which, with $D:=\D^{1/2}_\f\pi_\f(\ga)''\xi_\f$, leads to
\begin{equation}
\label{laeyu}
J_\om PW\xi=WJ_\f\xi\,,\quad \xi\in D\,.
\end{equation}
Now, first note that $WJ_\f$ is a closed operator and, since $J_\f$ is antiunitary and
$$
J_\f D=S_\f\pi_\f(\ga)''\xi_\f=\pi_\f(\ga)''\xi_\f
$$
is a core for $W$, $D$ is a core for $WJ_\f$ leading, by \eqref{laeyu}, to
\begin{equation}
\label{laeyu1}
WJ_\f=\overline{WJ_\f\lceil_D}\,.
\end{equation}
Finally, collecting together \eqref{laeyu1} and \eqref{laeyu0}, we get
$$
WJ_\f=\overline{WJ_\f\lceil_D}=\overline{J_\om PW\lceil_D}\,.
$$

(iv)$\Rightarrow$(iii) Put $W:=T^{1/2}_{\om/\f}$. We already know that $P=s(\upsilon_{\xi_\om})\in Z(M)$, with $M:=\pi_\om(\ga)''$. It is then enough to show that
$\upsilon_{\xi_\om}$ satisfies the KMS condition w.r.t. 
$\s^\f$. Indeed,
for $A,B$ in $M_{\s^\f}$, the weak${}^*$-dense involutive algebra of analytic elements w.r.t. $\s^\f$ (cf. \cite{BR, SZ, S}), we compute
\begin{align*}
\langle AB\xi_\om,\xi_\om\rangle
=&\langle BW\xi_\f,A^*W\xi_\f\rangle
=\langle WB\xi_\f,WA^*\xi_\f\rangle\\
=&\langle WJ_\f\D_\f^{1/2}B^*\xi_\f,WJ_\f\D_\f^{1/2}A\xi_\f\rangle\\
=&\langle J_\om PW\D_\f^{1/2}B^*\xi_\f,J_\om PW\D_\f^{1/2}A\xi_\f\rangle\\
=&\langle W\D_\f^{1/2}A\xi_\f,W\D_\f^{1/2}B^*\xi_\f\rangle\\
=&\langle W\s^\f_{-\iota/2}(A)\xi_\f,W\s^\f_{\iota/2}(B)^*\xi_\f\rangle\\
=&\langle\s^\f_{-\iota/2}(A)W\xi_\f,\s^\f_{\iota/2}(B)^*W\xi_\f\rangle\\
=&\langle\s^\f_{\iota/2}(B)\s^\f_{-\iota/2}(A)\xi_\om,\xi_\om\rangle\,.
\end{align*}
Therefore, $\upsilon_{\xi_\om}$ is a KMS state w.r.t. $\s^\f$ by Proposition 5.3.3 of \cite{BR}.
\end{proof}
\begin{rem}
Notice that, if one of the equivalent conditions of Theorem \ref{pedtake} is satisfied, and in particular (ii), we immediately deduce
\begin{equation*}
\D_{\upsilon_{\xi_\om}}=\D_{\upsilon_{\xi_\f}}\lceil_{P_{\om/\f}\ch_\f}\,,\quad J_{\upsilon_{\xi_\om}}=J_{\upsilon_{\xi_\f}}\lceil_{P_{\om/\f}\ch_\f}\,.
\end{equation*}
\end{rem}

We end the present section by outlining some consequence of the previous results in view of applications to Quantum Field Theory and, mainly, to Quantum Statistical Mechanics concerning the appearance of multiple phase and spontaneous symmetry breaking, see e.g. Vol. 2 of \cite{BR}, and \cite{E}. 

We say that representation $\pi_1$ of a $C^*$-algebra $\ga$ is {\it normal} w.r.t. another representation $\pi_2$ if there exists a, necessarily normal, $*$-epimorphism $\r:\pi_2(\ga)\to\pi_1(\ga)$ satisfying $\r\circ\pi_2=\pi_1$. The state $\om$ is normal w.r.t. the state $\f$ if $\pi_\om$ is normal w.r.t. $\pi_\f$. The reader is also referred to \cite{T}, Section III.2 for further details.

Notice that, if $\om$ is strongly absolutely continuous w.r.t. $\f$, such $*$-epimorphism certainly exists and it is given by
$$
\r(X):=XP_{\om/\f}\,,\quad X\in\pi_\f(\ga)''\,.
$$
In the case $\om\sim\f$, then $\r=\id_{\pi_\f(\ga)''}$. Notice that, it is unclear when the reverse property holds true. However, as a direct consequence of the above results, we point out that this is the case for state with central support in the bidual. The forthcoming remarks, whose checking is left to the reader, straightforwardly follows by Theorem \ref{pedtake} by reasoning as in Proposition 3.1 in \cite{BF}. We present the underlying situation taking into account the potential applications to Physics. 

Let $\{\a_t\}_{t\in\br}\subset\aut(\ga)$ be a-one parameter group of $*$-automorphisms acting on $\ga$, and $\f$ a KMS state (i.e. a $\b$-KMS state with $\b=-1$) w.r.t. such a group of automorphisms. In order to avoid the particular situation involving tracial states, however considered, we suppose that $\a_t$ is non trivial. By Proposition 5.3.12 in \cite{BR},
no a-priori continuity property is assumed on the map 
$\br\ni t\mapsto\a_t\in\aut(\ga)$. We also notice that such a $\f$ has central support, and the following equivariant property
$$
\pi_\f\circ\a_t=\s^{\upsilon_{\xi_\f}}_t\circ\pi_\f\,,\quad t\in\br\,,
$$
is fulfilled. The case of a tracial state $\f$ is also of interest. It corresponds to the case $\b=0$ (i.e. infinite temperature in the thermodynamic interpretation) without any condition on the group $\a$, or the case $\a$ trivial without any restriction on the inverse temperature $\b$.
\begin{rem}
\label{vuhlos}
Let $\f\in\cs(\ga)$ be a KMS state w.r.t. $\a$. Then there is a one-to-one correspondence $W\mapsto\f_W$ between:
\begin{itemize}
\item[(a)] the set $\{W\}$ of selfadjoint positive operator affiliated with the centre $\pi_\f(\ga)''\bigcap\pi_\f(\ga)'$ such that $\xi_\f\in\cd_W$ and
$\|W\xi_\f\|=1$;
\item[(b1)] the set $\{\f_W\}$ of KMS states w.r.t. $\a$ which are normal w.r.t. $\f$;
\item[(b2)] the set $\{\f_W\}$ of KMS states w.r.t. $\a$ which are strongly absolutely continuous w.r.t. $\f$.
\end{itemize}
The above assertion still holds by replacing everywhere the sentence ``KMS state(s) w.r.t. $\a$'' with ``tracial state(s)''.
\end{rem}

\section{the chain rule}
\label{secmain2}

From now on, we restrict the matter to states since the aim of the present paper is the investigation of quasi-invariance of states under the actions of groups via $*$-automorphisms.

Quite (not) surprisingly, the relation ``$\prec$'' does not satisfy transitivity. However, we have the following
\begin{thm}
\label{notrans}
Let $\ga$ be a $C^*$-algebra and $\om,\f,\psi\in\cs(\ga)$ with $\om\prec\f$ and $\f\prec\psi$. Define $D:=\pi_\psi(\ga)\xi_\psi$.
Then $\om\prec\psi$ if and only if the operator $A_o:=T^{1/2}_{\om/\f}T^{1/2}_{\f/\psi}\lceil_D$ is closable. 

In such a situation, we have
\begin{equation}
\label{redj}
\left(T^{1/2}_{\om/\f}T^{1/2}_{\f/\psi}\lceil_D\right)^*\overline{\left(T^{1/2}_{\om/\f}T^{1/2}_{\f/\psi}\lceil_D\right)}=T_{\om/\psi}\,.
\end{equation}
\end{thm}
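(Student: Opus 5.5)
The plan is to reduce everything to the operator $R_o$ of the proof of Theorem \ref{main}, now built with $\psi$ as dominating functional, and to show that it coincides with $A_o$ up to composition with an isometry. Throughout, set $W_1:=T^{1/2}_{\om/\f}$ and $W_2:=T^{1/2}_{\f/\psi}$. By Proposition \ref{pi}, we realise $(\pi_\f,\ch_\f,\xi_\f)$ inside $\ch_\psi$ as $(P_{\f/\psi}\pi_\psi,P_{\f/\psi}\ch_\psi,W_2\xi_\psi)$. Since $W_2\,\eta\,\pi_\psi(\ga)'$, we have $W_2\pi_\psi(a)\xi_\psi=\pi_\psi(a)W_2\xi_\psi$, and this vector lies in $P_{\f/\psi}\ch_\psi$. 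So $W_2$ maps $D$ onto $\pi_\f(\ga)\xi_\f$, which is a core for $W_1$. We therefore regard $W_1$ as an operator on $P_{\f/\psi}\ch_\psi$, affiliated with $\pi_\f(\ga)'=P_{\f/\psi}\pi_\psi(\ga)'P_{\f/\psi}$.

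With this identification, $A_o$ is well defined on $D$ and
$$
A_o\pi_\psi(a)\xi_\psi=W_1\pi_\f(a)\xi_\f\,,\qquad \|A_o\pi_\psi(a)\xi_\psi\|^2=\om(a^*a)\,,\quad a\in\ga\,.
$$
The first key step is to compare $A_o$ with $R_o:\pi_\psi(a)\xi_\psi\mapsto\pi_\om(a)\xi_\om$, the operator from the proof of Theorem \ref{main} applied to the pair $(\om,\psi)$. We realise $\pi_\om$ via Proposition \ref{pi} relative to $\f$. Then $\xi_\om=W_1\xi_\f$, and $W_1\pi_\f(a)\xi_\f=\pi_\f(a)W_1\xi_\f=\pi_\om(a)\xi_\om$. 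Hence $A_o=R_o$ literally, up to the isometric inclusion $P_{\om/\f}\ch_\f\subset\ch_\psi$.

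Well-definedness of $A_o$ on $D$ follows at once. If $\pi_\psi(a)\xi_\psi=0$, then $\psi(a^*a)=0$. Since $\f\prec\psi$, this gives $\f(a^*a)=0$, and since $\om\prec\f$ it gives $\om(a^*a)=0$.

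The proof of Theorem \ref{main} shows the following equivalence: the existence of a $(\psi,\om)$-sequence with $\om(a_n^*a_n)\not\to0$ is exactly a failure of closability of $R_o$ on $D$. Explicitly, $\xi_n=\pi_\psi(a_n)\xi_\psi\to0$ while $R_o\xi_n$ is Cauchy with nonzero limit. Consequently $\om\prec\psi$ holds if and only if $R_o$, equivalently $A_o$, is closable. This gives the first assertion.

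For \eqref{redj}, assume closability and let $\overline{A_o}=V|\overline{A_o}|$ be the polar decomposition. The proof of Theorem \ref{main} shows that $T_{\om/\psi}$ is the positive part of $\overline{R_o}$ squared, so $T_{\om/\psi}=\overline{R_o}^*\,\overline{R_o}$. Since $\overline{A_o}=\overline{R_o}$ up to an isometry, their adjoint-products coincide. Moreover $A_o^*=\overline{A_o}^*$ for a closable operator. Hence
$$
A_o^*\overline{A_o}=T_{\om/\psi}\,,
$$
and uniqueness in Theorem \ref{main} confirms the identification.

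The main obstacle is bookkeeping of the Hilbert spaces. The $\om$-GNS space sits inside $P_{\f/\psi}\ch_\psi$, and the adjoint in \eqref{redj} must be taken with $A_o$ viewed as an operator from $\ch_\psi$ into $\ch_\psi$. I would check that enlarging the codomain by an isometric embedding changes neither closability nor the product $A^*\overline{A}$. This holds because for an isometry $J$ one has $(JA)^*=A^*J^*$ and $J^*J=1$.
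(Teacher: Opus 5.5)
Your proposal is correct and follows essentially the same route as the paper: the closability equivalence comes from the norm identities $\|A_o\pi_\psi(a)\xi_\psi\|^2=\om(a^*a)$ and $\|\pi_\psi(a)\xi_\psi\|^2=\psi(a^*a)$ read against the definition of a $(\psi,\om)$-sequence, and \eqref{redj} then follows from the construction $T=R^*R$ in the proof of Theorem \ref{main} together with uniqueness. Your explicit identification of $A_o$ with an isometric copy of the operator $R_o$ for the pair $(\om,\psi)$, together with the check that $(JA)^*=A^*J^*$ and $J^*J=1$ leave $A^*\overline{A}$ unchanged, makes precise the step the paper calls ``routine''.
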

\begin{proof}
Fix a sequence $(x_n)_n\subset\ga$ and put $\xi_n:=\pi_\psi(x_n)\xi_\psi$. Notice that
\begin{align*}
&\|\xi_n\|^2=\psi(x_n^*x_n)\,,\quad \|A_o\xi_n\|^2=\om(x_n^*x_n)\,,\\
&\|A_o(\xi_n-\xi_m)\|^2=\om\big((x_n-x_m)^*(x_n-x_m)\big)\,.
\end{align*}
But this is nothing else than the equivalence between the strong absolute continuity of $\om$ w.r.t. $\psi$, and the closability of $A_o$.

Concerning the last part, the proof ends with the analogous computations in the proof of Theorem \ref{main}. Since $A_o$ is closable on its core $D$,
with $A:=\overline{A_o}$, it is a routine to check that  $A^*A\in\pi_\psi(\ga)'$ and 
$$
W:=(A^*A)^{1/2}=\left(\left(T^{1/2}_{\om/\f}T^{1/2}_{\f/\psi}\lceil_D\right)^*\overline{\left(T^{1/2}_{\om/\f}T^{1/2}_{\f/\psi}\lceil_D\right)}\right)^{1/2}
$$
satisfies 
$$
\om(a)=\langle\pi_\psi(a)W\xi_\psi,W\xi_\psi\rangle\,,\quad a\in\ga\,.
$$

The proof now ends by the uniqueness of the Radon-Nikodym derivative exploited in Theorem \ref{main}.
\end{proof}

Suppose $\om,\f,\psi\in\cs(\ga)$ with $\om\prec\f$, $\f\prec\psi$, and define $P_{\f/\psi}$ and $P_{\om/\f}$ as the selfadjoint projections onto 
$[\pi_\psi(\ga)T^{1/2}_{\f/\psi}]\xi_\psi$ and 
$$
[\pi_\f(\ga)T^{1/2}_{\om/\f}\xi_\f]=[\pi_\psi(\ga)T^{1/2}_{\om/\f}T^{1/2}_{\f/\psi}\xi_\psi]
$$ 
respectively. In the situation of the above theorem, that is 
\begin{equation}
\label{ehkl}
\f\prec\psi\,\,\&\,\,\om\prec\f\Rightarrow \om\prec\psi 
\end{equation}
or, equivalently,
$T^{1/2}_{\om/\f}T^{1/2}_{\f/\psi}\lceil_{\pi_\psi(\ga)\xi_\psi}$ is closable,
we can define the analogous projection onto $[\pi_\psi(\ga)T^{1/2}_{\om/\psi}\xi_\psi]$, obtaining
\begin{equation}
\label{porj}
P_{\om/\psi}=P_{\om/\f}P_{\f/\psi}\,.
\end{equation}
The following comment is in order: \eqref{redj} and \eqref{porj} can be considered as ``chain rules'' for densities and projections, provided that \eqref{ehkl} is satisfied.

\begin{prop}
\label{coidecz}
Suppose that $\psi,\f,\om\in\cs(\ga)$. Then \eqref{ehkl} is satisfied in one of the following situations:
\begin{itemize}
\item[(i)] $\f$ is $\psi$-dominated and $\om$ is strongly $\f$-absolutely continuous;
\item[(ii)] $\psi\sim\f$, $\f\sim\om$, $\xi_\psi\in D(T^{1/2}_{\f/\om})$ and $\eta:=T^{1/2}_{\f/\om}\xi_\psi$ is cyclic for $\pi_\psi(\ga)$;
\item[(iii)] $\ga$ is abelian.
\end{itemize}
\end{prop}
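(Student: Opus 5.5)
The common tool is Theorem \ref{notrans}: \eqref{ehkl} holds as soon as $A_o=T^{1/2}_{\om/\f}T^{1/2}_{\f/\psi}\lceil_{D}$, $D=\pi_\psi(\ga)\xi_\psi$, is closable. Equivalently, I can check directly that every $(\psi,\om)$-sequence $(a_n)_n$ satisfies $\om(a_n^*a_n)\to0$. Throughout I use the identification of Proposition \ref{pi}: $\ch_\f=P_{\f/\psi}\ch_\psi$, $\pi_\f=P_{\f/\psi}\pi_\psi$, $\xi_\f=T^{1/2}_{\f/\psi}\xi_\psi$. Under it, $T_{\om/\f}$ is a positive selfadjoint operator on $P_{\f/\psi}\ch_\psi$ affiliated with $P_{\f/\psi}\pi_\psi(\ga)'P_{\f/\psi}$.

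\emph{Case (i).} I would avoid operators entirely. Let $(a_n)_n$ be a $(\psi,\om)$-sequence. Domination gives $\f(a_n^*a_n)\leq M\psi(a_n^*a_n)\to0$, and the Cauchy condition for $\om$ is unchanged. Hence $(a_n)_n$ is a $(\f,\om)$-sequence, and $\om\prec\f$ yields $\om(a_n^*a_n)\to0$. Equivalently, $T^{1/2}_{\f/\psi}$ is bounded by Theorem \ref{borarb}, and a closed operator times a bounded one, restricted to $D$, is closable.

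\emph{Case (iii).} For abelian $\ga$, $\pi_\psi(\ga)''$ is abelian with cyclic vector $\xi_\psi$, so it is maximal abelian and $\pi_\psi(\ga)'=\pi_\psi(\ga)''\cong L^\infty(\m_\psi)$. Thus $T^{1/2}_{\f/\psi}$ is multiplication by a measurable $f\geq0$. The reduced algebra $P_{\f/\psi}\pi_\psi(\ga)'P_{\f/\psi}$ is $L^\infty$ of the support of $f$, so $T^{1/2}_{\om/\f}$ is multiplication by a measurable $g\geq0$ there. Extend $g$ by $0$ off the support of $f$. The product of two operators affiliated with an abelian von Neumann algebra is contained in the closed multiplication operator by $gf$, so $A_o\subset M_{gf}$ and $A_o$ is closable. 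The only points to check are the domain inclusion $T^{1/2}_{\f/\psi}D\subset\cd(T^{1/2}_{\om/\f})$, which holds because $\pi_\f(\ga)\xi_\f$ is a core for $T^{1/2}_{\om/\f}$, and that the two multiplications agree on $D$.

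\emph{Case (ii).} This is the main obstacle. Since all three states are equivalent, Theorem \ref{invinv} gives $P=I$ everywhere. So all GNS spaces are one space $\ch$ with one representation $\pi$, and $\xi_\f=T^{1/2}_{\f/\psi}\xi_\psi$, $\xi_\om=T^{1/2}_{\om/\f}\xi_\f$, $T_{\f/\om}=T^{-1}_{\om/\f}$. My plan is to prove $\psi\prec\om$ rather than $\om\prec\psi$, and then use the symmetry in Theorem \ref{invinv}, which upgrades $\psi\prec\om$ to $\om\prec\psi$ once the relevant density is injective.

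The hypothesis $\xi_\psi\in\cd(T^{1/2}_{\f/\om})$ says $\xi_\psi=T^{1/2}_{\om/\f}\eta$. Affiliation with $\pi(\ga)'$ then gives $\pi(a)\xi_\psi=T^{1/2}_{\om/\f}\pi(a)\eta$. Cyclicity of $\eta$ lets me regard $\eta$ as the GNS vector of the vector state $\om_\eta:=\langle\pi(\,\cdot\,)\eta,\eta\rangle$. I would then try to show that the map $\pi(a)\xi_\om\mapsto\pi(a)\xi_\psi$ is a restriction of a closed operator, namely of the inverse of the closed operator $T^{1/2}_{\f/\psi}$ composed with $T^{-1/2}_{\om/\f}$ and, via $\eta$, with $T^{1/2}_{\om/\f}$. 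The goal is to reduce closability to the closedness of a single operator affiliated with $\pi(\ga)'$, the argument used in (ii)$\Rightarrow$(i) of Theorem \ref{main}.

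I expect the delicate point to be the domain/core bookkeeping. One must justify that $\pi(\ga)\eta$ lies in, and is a core for, the relevant domain, so that the product of unbounded operators is taken on a common invariant core. If that fails, I would fall back to a sequence argument. Take a $(\psi,\om)$-sequence and express its images through $\eta$. Then use the closedness of $T^{1/2}_{\om/\f}$ and $T^{1/2}_{\f/\psi}$ in turn to force the limit $y$ of $T^{1/2}_{\om/\f}T^{1/2}_{\f/\psi}\pi(a_n)\xi_\psi$ to vanish.
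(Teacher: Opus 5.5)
Your arguments for (i) and (iii) are fine. In (i), the direct sequence argument is, if anything, more elementary than the paper's remark that $T^{1/2}_{\f/\psi}$ is bounded, which you also give. Your (iii) is a correct and more explicit version of the paper's one-line appeal to the abelian algebra of affiliated operators.

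The gap is in (ii): you do not actually prove it, and neither mechanism you sketch works as stated.
\begin{itemize}
\item Realising $\pi(a)\xi_\om\mapsto\pi(a)\xi_\psi$ inside $T^{-1/2}_{\f/\psi}T^{-1/2}_{\om/\f}$ only swaps one product of unbounded closed operators for another. Its closability is exactly the issue (cf.\ Subsection \ref{maov}).
\item Your fallback fails for the same reason. From $\pi(a_n)\xi_\psi\to0$ and $A_o\pi(a_n)\xi_\psi\to y$ you cannot use the closedness of $T^{1/2}_{\om/\f}$ and $T^{1/2}_{\f/\psi}$ ``in turn'', because the intermediate vectors $T^{1/2}_{\f/\psi}\pi(a_n)\xi_\psi$ need not converge.
\item Your detour through $\psi\prec\om$ needs injectivity of $T_{\psi/\om}$. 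You never address this, and by Theorem \ref{invinv}, given $\psi\prec\om$, it is equivalent to the very conclusion $\om\prec\psi$.
\item You use the cyclicity of $\eta$ only to view $\eta$ as a GNS vector. What is needed is cyclicity as a density statement.
\end{itemize}

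The missing idea is a duality argument. With $\pi=\pi_\psi$ and the identifications, $\eta=T^{-1/2}_{\om/\f}\xi_\psi$, so affiliation gives $\pi(b)\xi_\psi=T^{1/2}_{\om/\f}\pi(b)\eta$. Also $A_o\pi(a)\xi_\psi=T^{1/2}_{\om/\f}\pi(a)\xi_\f$ and $\pi(a)\xi_\f=T^{1/2}_{\f/\psi}\pi(a)\xi_\psi$. Using selfadjointness twice,
\[
\langle A_o\pi(a)\xi_\psi,\pi(b)\eta\rangle=\langle\pi(a)\xi_\f,\pi(b)\xi_\psi\rangle=\langle\pi(a)\xi_\psi,\pi(b)\xi_\f\rangle\,,\quad a,b\in\ga\,.
\]
Thus $\pi(\ga)\eta\subset\cd(A_o^*)$ with $A_o^*\pi(b)\eta=\pi(b)\xi_\f$. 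Cyclicity of $\eta$ makes $A_o^*$ densely defined, so $A_o$ is closable; this is the paper's proof. In your sequence language: $\langle y,\pi(b)\eta\rangle=\lim_n\langle\pi(a_n)\xi_\psi,\pi(b)\xi_\f\rangle=0$ for all $b$, so $y=0$.

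The same identity would also rescue your detour. For $R_o:\pi(a)\xi_\om\mapsto\pi(a)\xi_\psi$ it gives $R_o^*\pi(b)\xi_\f=\pi(b)\eta$. Since $\xi_\f$ is cyclic, $R_o$ is closable, which is $\psi\prec\om$. Moreover $\ker\overline{R_o}\subset(\pi(\ga)\eta)^\perp=\{0\}$, so $T_{\psi/\om}$ is injective and Theorem \ref{invinv} yields $\om\prec\psi$. Without this computation, however, (ii) remains unproved.
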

\begin{proof}
(i) follows by Theorem \ref{notrans}, taking into account that $T_{\f/\psi}$ is bounded and thus 
$T^{1/2}_{\om/\f}T^{1/2}_{\f/\psi}\lceil_{\pi_\psi(\ga)\xi_\psi}$ is closable.

(ii) We show that, under such a condition, $A_o^*$ is densely defined. Indeed, on the cyclic subspaces $\pi(\ga)\xi_\psi$, $\pi(\ga)\eta$, we compute
\begin{align*}
\langle T^{1/2}_{\om/\f}T^{1/2}_{\f/\psi}\pi(a)\xi_\psi,\pi(b)\eta\rangle
=&\langle T^{1/2}_{\om/\f}T^{1/2}_{\f/\psi}\pi(a)\xi_\psi,\pi(b)T^{1/2}_{\f/\om}\xi_\psi\rangle\\
=&\langle T^{1/2}_{\om/\f}T^{1/2}_{\f/\psi}\pi(a)\xi_\psi,T^{-1/2}_{\om/\f}\pi(b)\xi_\psi\rangle\\
=&\langle T^{1/2}_{\f/\psi}\pi(a)\xi_\psi,\pi(b)T^{1/2}_{\psi/\f}\xi_\f\rangle\\
=&\langle T^{1/2}_{\f/\psi}\pi(a)\xi_\psi,T^{-1/2}_{\f/\psi}\pi(b)\xi_\f\rangle\\
=&\langle \pi(a)\xi_\psi,\pi(b)\xi_\f\rangle
\end{align*}
Therefore, the dense space $\pi_\psi(\ga)\eta$ is in the domain of $A_o^*$ (and we have
$A_o^*\pi_\psi(x)\eta=\pi_\psi(x)\xi_\f$.)

(iii) If $\ga$ is abelian, the relation ``$\prec$'' is obviously transitive. This can be easily proved as 
each $W\,\eta\,\pi_\f(\ga)'$ is an element of the positive extended part
$\overline{\pi_\f(\ga)'}_+=\overline{\pi_\f(\ga)''}_+$
and, there, the product is well defined.
\end{proof}

\section{quasi-invariance under the action of a group}
\label{inupnkz}

At the light of the results in the previous section under which we deduce that, in general, ``$\prec$'' and, even ``$\sim$'', do not verify the transitive property, in order to speak and work with ''quasi-invariance under the action of a group'', we must either assume transitivity, or demonstrate that the transitivity is automatically satisfied in this situation. We will show that the latter is indeed the case.

\medskip

Let $\ga$ be a $C^*$-algebra and $G\stackrel{\a_g}{\curvearrowright}\ga$ an action of the group $G$ on $\ga$ by $*$-automorphisms. A state $\f\in\cs(\ga)$ is said to be {\it quasi-invariant under the action} $\a_g$, simply shortened as a ``quasi-invariant state'', if
$$
\f\circ\a_g\sim\f\,;\, g\in G\,.
$$
The set of quasi-invariant states is denoted by $\cs^G(\ga)$. We put $\f_g:=\f\circ\a_g$ and, for $\f\in\cs^G(\ga)$ we denote for the set of Radon-Nikodym derivatives, 
$$
T_g:=T_{(\f\circ\a_g)/\f}\equiv T_{\f_g/\f}\\,;\, g\in G\,.
$$

Here, there is the following crucial result:
\begin{prop}
\label{ortiiq}
If $\f\in\cs^G(\ga)$, then 
$\f_g\sim\f_h$ for all $g,h\in G$.
\end{prop}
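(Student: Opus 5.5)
The plan rests on one observation: both dominance and equivalence are preserved when \emph{both} states are composed with the same $*$-automorphism. I will first record this as a lemma and then apply it with the group structure.

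\emph{Invariance lemma.} For a $*$-automorphism $\b$ and positive functionals $\om,\f$, we have $\om\prec\f\iff\om\circ\b\prec\f\circ\b$. Take a sequence $(a_n)_n\subset\ga$ and set $b_n:=\b(a_n)$. Since $\b$ is a $*$-automorphism,
$$
(\f\circ\b)(a_n^*a_n)=\f(b_n^*b_n)\,,\quad (\om\circ\b)\big((a_n-a_m)^*(a_n-a_m)\big)=\om\big((b_n-b_m)^*(b_n-b_m)\big)\,.
$$
Hence $(a_n)_n$ is an $(\f\circ\b,\om\circ\b)$-sequence exactly when $(b_n)_n$ is an $(\f,\om)$-sequence. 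Moreover $(\om\circ\b)(a_n^*a_n)=\om(b_n^*b_n)$. Because $\b$ is bijective on $\ga$, every sequence $(b_n)_n$ arises as $(\b(a_n))_n$ for some $(a_n)_n$. The two conditions in Definition \ref{dom}(ii) therefore coincide. This is the same argument already used in the proofs of Theorem \ref{multrac}(i) and (ii). Applying it in both directions gives $\om\sim\f\iff\om\circ\b\sim\f\circ\b$.

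\emph{Applying the lemma.} Fix $g,h\in G$. Quasi-invariance of $\f$ applied to the element $g^{-1}h\in G$ gives $\f\circ\a_{g^{-1}h}\sim\f$. Now apply the lemma with $\b:=\a_g$. Since $g\mapsto\a_g$ is a representation, $\a_{g^{-1}h}\circ\a_g$ equals $\a_{hg^{-1}g}$ or $\a_{g^{-1}hg}$, depending on the order convention for the action. The correct choice of group element is the one that lands on $\f\circ\a_h$. Explicitly, since $\f\circ\a_{k}\circ\a_g=\f\circ\a_{kg}$, I take $k:=hg^{-1}$. Then $\f\circ\a_k\sim\f$ by quasi-invariance, and the lemma yields
$$
\f\circ\a_{hg^{-1}}\circ\a_g\sim\f\circ\a_g\,,\qquad\text{that is,}\qquad \f_h\sim\f_g\,.
$$

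\emph{Where the care is needed.} The substantive point is that the argument never goes through $\f$ as an intermediate state, so it never relies on transitivity of $\sim$, which fails in general (Theorem \ref{notrans}). Instead, a single group element is transported by one automorphism. The only real care needed is the bookkeeping of the composition order $\a_k\circ\a_g=\a_{kg}$. The remaining verification, that composition with a bijective $*$-automorphism preserves the notion of $(\f,\om)$-sequence, is routine.
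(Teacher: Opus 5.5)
Your proof is correct and follows the paper's argument: the paper also starts from the single relation $\f_{gh^{-1}}\sim\f$ and transports it through one automorphism by re-indexing sequences ($x_n:=\a_{h^{-1}}(a_n)$), which is your invariance lemma with the roles of $g$ and $h$ swapped. One small cleanup: drop the initial mention of $g^{-1}h$ and of $\a_{hg^{-1}g}$, since the element you actually need, and use, is $k=hg^{-1}$ together with $\a_k\circ\a_g=\a_{kg}$.
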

\begin{proof}
Fix $g,h\in G$. By definition, $\f_{gh^{-1}}\sim\f$. This means that, for each $a\in\ga$,
\begin{align*}
0=&\lim_n\f_h\big(\a_{h^{-1}}(a_n)^*\a_{h^{-1}}(a_n)\big)\\
=&\lim_{m,n}\f_g\big((\a_{h^{-1}}(a_n)-\a_{h^{-1}}(a_m))^*(\a_{h^{-1}}(a_n)-\a_{h^{-1}}(a_m))^*\big)\\
\Rightarrow&\lim_n\f_g\big(\a_{h^{-1}}(a_n)^*\a_{h^{-1}}(a_n)\big)=0
\end{align*}
and, vice-versa,
\begin{align*}
0=&\lim_n\f_g\big(\a_{h^{-1}}(a_n)^*\a_{h^{-1}}(a_n)\big)\\
=&\lim_{m,n}\f_h\big((\a_{h^{-1}}(a_n)-\a_{h^{-1}}(a_m))^*(\a_{h^{-1}}(a_n)-\a_{h^{-1}}(a_m))^*\big)\\
\Rightarrow&\lim_n\f_h\big(\a_{h^{-1}}(a_n)^*\a_{h^{-1}}(a_n)\big)=0\,.
\end{align*}

Now, reading the above computation for $x_n:=\a_{h^{-1}}(a_n)$, we easily obtain the assertion.
\end{proof}
\begin{prop}
\label{concon1}
For the sets $\cs_G(\ga)$ and $\cs^G(\ga)$ of invariant and quasi-invariant states, we have:
\begin{itemize} 
\item[(i)] $\cs_G(\ga)\subset\cs^G(\ga)$;
\item[(ii)] $\cs^G(\ga)$ is convex, but not necessarily ${}^*$-weakly closed.
\end{itemize} 
\end{prop}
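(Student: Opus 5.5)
Part (i) is immediate. If $\f\in\cs_G(\ga)$, then $\f\circ\a_g=\f$ for every $g\in G$. The relation $\f\prec\f$ holds trivially: any $(\f,\f)$-sequence satisfies $\lim_n\f(a_n^*a_n)=0$ by definition. Hence $\f\circ\a_g\sim\f$, and by uniqueness in Theorem \ref{main} we get $T_g=I$.

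For convexity in (ii), take $\f_1,\f_2\in\cs^G(\ga)$, $\l\in[0,1]$, and set $\f:=\l\f_1+(1-\l)\f_2$. The plan is to check $\f\circ\a_g\prec\f$ and $\f\prec\f\circ\a_g$ directly from Definition \ref{dom}(ii), arguing as in the first part of Proposition \ref{concon}. Since $\f\circ\a_g=\l\f_1\circ\a_g+(1-\l)\f_2\circ\a_g$, let $(a_n)_n$ be a $(\f,\f\circ\a_g)$-sequence, and assume $0<\l<1$ (the endpoint cases are trivial).
\begin{itemize}
\item From $\lim_n\f(a_n^*a_n)=0$ and positivity, $\lim_n\f_i(a_n^*a_n)=0$ for $i=1,2$.
\item Likewise, the Cauchy condition for $\f\circ\a_g$ splits into Cauchy conditions for each $\f_i\circ\a_g$, because both summands are positive.
\end{itemize}
So $(a_n)_n$ is a $(\f_i,\f_i\circ\a_g)$-sequence for $i=1,2$. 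Quasi-invariance gives $\f_i\circ\a_g\prec\f_i$, hence $\lim_n\f_i\circ\a_g(a_n^*a_n)=0$, and summing yields $\lim_n\f\circ\a_g(a_n^*a_n)=0$. The reverse relation $\f\prec\f\circ\a_g$ follows by the identical argument with the roles exchanged, using $\f_i\prec\f_i\circ\a_g$. The only point needing care is that the splitting works because all functionals are positive and the coefficients are strictly positive. The argument does not need the Radon–Nikodym derivative. If one wants it anyway, for a convex combination of two positive functionals each dominated in the pair sense, it is given by Proposition \ref{concon} in GNS spaces adapted to each $\f_i$.

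For non-closedness, I would build an explicit abelian counterexample, where by Proposition \ref{coidecz}(iii) everything reduces to measure theory: on $\ga=C(X)$, dominance of measures satisfying \eqref{mnust} is strong absolute continuity. The plan is to take $\ga=C(\bt)$ with $G=\bz$ acting by an irrational rotation $\a(f)(z)=f(e^{2\pi i\th}z)$, and the Dirac state $\d_1$ as the target limit. $\d_1$ is not quasi-invariant, since $\d_1\circ\a=\d_{e^{2\pi i\th}}$ is mutually singular with $\d_1$.

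The approximating states will be $\f_n$, the normalised Lebesgue measures $\m_n:=\frac1{|I_n|}\chi_{I_n}\di x$ on shrinking arcs $I_n\ni 1$. Each $\m_n$ is not quasi-invariant, because a rotation moves the support. So I would instead take $\m_n:=\r_n\di x$ with $\r_n$ a strictly positive continuous density concentrating at $1$, for instance a normalised Poisson kernel $P_{r_n}$ with $r_n\uparrow1$. Then each $\m_n$ is equivalent to Lebesgue measure, which is rotation invariant, so $\m_n\circ\a^k\sim\m_n$ and $\f_n\in\cs^G(\ga)$. Moreover $\f_n\to\d_1$ weakly${}^*$, while $\d_1\notin\cs^G(\ga)$.

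The main obstacle is making sure the notion in Definition \ref{dom}(ii) really coincides with measure equivalence for $C(\bt)$. It is not fully automatic, since Example \ref{exa2} shows the subtlety with continuous functions. For a density in $L^1$, however, strong absolute continuity can be certified directly through Theorem \ref{main}(ii): the Radon–Nikodym derivative is multiplication by the density on $L^2(\bt,\di x)$ with $C(\bt)$ as a core. Concretely, quasi-invariance of $\f_n$ needs both $\f_n\circ\a^k\prec\f_n$ and its reverse, and the reverse is covered by swapping $\f_n$ and $\f_n\circ\a^k$ in the same argument. The derivative is multiplication by $\r_n(e^{2\pi ik\th}\cdot)/\r_n$, which is bounded with bounded inverse because $\r_n$ is continuous and strictly positive. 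So both states are mutually dominated in the sense of Definition \ref{dom}(i), which avoids any core issue.
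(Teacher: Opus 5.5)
Your proof is correct. Part (i) matches the paper, but both halves of (ii) take a different route.

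\textbf{Convexity.} The paper sets $\psi=\l\om+(1-\l)\f$ and notes that $\l\om$ and $(1-\l)\f$ are $\psi$-dominated in the bounded sense. It then uses Proposition \ref{coidecz}(i) to lift $\l\om_g\prec\l\om$ and $(1-\l)\f_g\prec(1-\l)\f$ to $\prec\psi$, and adds them via Proposition \ref{concon} to get $\psi_g\prec\psi$. Finally it obtains $\psi\prec\psi_g$ by replacing $g$ with $g^{-1}$ and transporting by $\a_g$.

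You instead verify both relations directly from Definition \ref{dom}(ii), splitting each $(\psi,\psi_g)$-sequence (resp.\ $(\psi_g,\psi)$-sequence) into sequences of the same type for the two components. This is shorter and symmetric in the two directions, and it does not need the closability and chain-rule machinery. It also proves a more general fact: $\om_i\prec\f_i$, $i=1,2$, implies $\l\om_1+(1-\l)\om_2\prec\l\f_1+(1-\l)\f_2$.

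What the paper's route buys is that, being assembled from its general results, it gives in principle an expression for $T_{\psi_g/\psi}$ in $\ch_\psi$ as a form sum of chain-rule products. Your closing remark that the derivative is ``given by Proposition \ref{concon} in GNS spaces adapted to each $\f_i$'' is imprecise, since that proposition concerns a single dominating functional. Nothing in your argument depends on it.

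\textbf{Non-closedness.} The paper only points to the $g(x)=\sqrt{x}$ example of Subsection \ref{otof}. There, however, the cluster points $\d_0,\d_1$ of the orbit are point masses at fixed points of $g$. They are therefore invariant states, and by (i) they lie in $\cs^G(\ga)$. That example shows that orbit closures can contain states singular to the starting one, but it does not by itself show that $\cs^G(\ga)$ fails to be closed.

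Your construction does establish the claim:
\begin{itemize}
\item The Poisson-density states are quasi-invariant by two-sided bounded domination. This correctly avoids the subtlety of Example \ref{exa2}, and it makes your appeal to Proposition \ref{coidecz}(iii) unnecessary, since that result concerns transitivity.
\item They converge weakly${}^*$ to $\d_1$.
\item $\d_1\circ\a=\d_{e^{2\pi i\th}}\not\prec\d_1$: the constant sequence $a_n=f$ with $f(1)=0\neq f(e^{2\pi i\th})$ is a $(\d_1,\d_1\circ\a)$-sequence along which $(\d_1\circ\a)(f^*f)\neq0$.
\end{itemize}
Irrationality of $\th$ plays no role; any nontrivial rotation works.
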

\begin{proof}
(i) is immediate, and thus we only have to prove (ii).

The fact that $\cs^G(\ga)$ is not closed follows by the last example in Subsection \ref{otof}. Concerning the convexity, we reason as follows.
Fix $\om,\f\in\cs^G(\ga)$, $\l\in[0,1]$, and put $\psi:=\l\om+(1-\l)\f$. Obviously,
$\l\om$ and $(1-\l)\f$ are $\psi$-dominated, and $(\l\om)\sim(\l\om_g)$, $\big((1-\l)\f\big)\sim\big((1-\l)\f_g\big)$ by quasi-invariance.
Summarising, we get
\begin{equation*}
\left.
\begin{array}{ll}
\,\,\,\,\qquad(\l\om_g)\sim&\!\!\!(\l\om)\\
\big((1-\l)\f_g\big)\sim&\!\!\!\big((1-\l)\f\big)\\
\end{array}
\right\}\prec\psi\,.
\end{equation*}
By (i) in Proposition \ref{coidecz}, we deduce
\begin{equation}
\label{mca}
\left.
\begin{array}{ll}
\,\,\,\,\qquad(\l\om_g)&\\
\big((1-\l)\f_g\big)&\\
\end{array}
\right\}\prec\psi\,.
\end{equation}
By Proposition \ref{concon}, we know that the set of positive functionals strongly absolutely continuous by a fixed one is closed under the sum and thus, after summing up the elements on the l.h.s. of \eqref{mca}, we obtain
$$
\psi_g=\l\om_g+(1-\l)\f_g\prec\psi\,.
$$
The proof now ends as follows: as before, since $g\in G$ is arbitrary, we also deduce that $\psi_{g^{-1}}\prec\psi$, which is equivalent to
$\psi\prec\psi_g$. Then $\psi\sim\psi_g$ for all $g\in G$.
\end{proof}

We now pass to the rule satisfied by the involved Radon-Nikodym derivative $T_g\equiv T_{(\f\circ\a_g)/\f}$ for a fixed state $\f\in\cs^G(\ga)$.

Collecting together Proposition \ref{ortiiq}, Theorem \ref{multrac} and Theorem \ref{notrans}, we get the following
\begin{thm}[chain rule]
\label{lenire}
Consider an action $G\stackrel{\a_g}{\curvearrowright}\ga$ of a group $G$ on the $C^*$-algebra $\ga$ by $*$-automorphisms $\a_g$. With $D=\pi_\f(\ga)\xi_\f$, 
\begin{equation}
\label{relautz}
A_o:=U_g T_h^{1/2}U_g^*T_g^{1/2}\lceil_D=U_g T_h^{1/2}T_{g^{-1}}^{-1/2}U_g^*\lceil_D
\end{equation}
is closable, and we get
$$
T_{hg}=A_o^*\overline{A_o}\,.
$$
\end{thm}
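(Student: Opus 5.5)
The plan is to realise $T_{hg}$ as the Radon-Nikodym derivative in a chain $\f_{hg}\prec\f_g\prec\f$ and then invoke Theorem \ref{notrans}. By Proposition \ref{ortiiq}, $\f_{hg}\sim\f_g\sim\f$. In particular $\f_{hg}\prec\f_g$ and $\f_g\prec\f$, and transitivity $\f_{hg}\prec\f$ already holds because $\f$ is quasi-invariant. Theorem \ref{notrans}, applied with $\om=\f_{hg}$, the intermediate state $\f_g$ and $\psi=\f$, therefore says that $T^{1/2}_{\f_{hg}/\f_g}T^{1/2}_{\f_g/\f}\lceil_D$ is closable. It also gives
$$
T_{hg}=\big(T^{1/2}_{\f_{hg}/\f_g}T^{1/2}_{\f_g/\f}\lceil_D\big)^*\overline{\big(T^{1/2}_{\f_{hg}/\f_g}T^{1/2}_{\f_g/\f}\lceil_D\big)}\,.
$$
This is the easy part. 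The rest of the work is identifying the middle derivative with the expression in \eqref{relautz}.

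For that I will use Theorem \ref{multrac}(ii) with $\a=\a_g$ and $\om=\f_h$. Since $\f_h\circ\a_g=\f\circ\a_h\circ\a_g=\f_{hg}$ (taking the convention $\a_h\a_g=\a_{hg}$) and $\f_g,\f_h\prec\f$, we obtain $T_{\f_{hg}/\f_g}=U_gT_hU_g^*$. Here $U_g:=U_{\a_g}$ is unitary by Theorem \ref{twogns}, because $\f_g\sim\f$. By functional calculus under unitary conjugation, $T^{1/2}_{\f_{hg}/\f_g}=U_gT_h^{1/2}U_g^*$. Inserting this into the chain above gives the first expression for $A_o$ in \eqref{relautz}, together with its closability and the formula $T_{hg}=A_o^*\overline{A_o}$.

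For the second equality in \eqref{relautz}, I take \eqref{aamu} with $\a=\a_g$: $T_{g^{-1}}=U_g^*T_g^{-1}U_g$. Hence $T_{g^{-1}}^{-1/2}=U_g^*T_g^{1/2}U_g$, using injectivity from Theorem \ref{invinv}, so that $T_{g^{-1}}^{-1/2}U_g^*=U_g^*T_g^{1/2}$ as operators with matching domains. Substituting this yields $U_gT_h^{1/2}U_g^*T_g^{1/2}=U_gT_h^{1/2}T_{g^{-1}}^{-1/2}U_g^*$ on $D$, because $U_g$ is unitary.

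The main obstacle is bookkeeping of conventions rather than analysis. First, I must check that the GNS triple used for $\f_g$ in Theorem \ref{notrans} is the one identified with $(\pi_\f,\ch_\f,T_g^{1/2}\xi_\f)$; this holds because $P_{\f_g/\f}=I$. Second, the composition order $\a_h\a_g=\a_{hg}$ must match the indexing $T_{hg}$; if the action were written the other way, the roles of $g$ and $h$ would swap. I would also verify the domains: $D$ lies in $\cd(T_g^{1/2})$, and $T_g^{1/2}D=\pi_\f(\ga)T_g^{1/2}\xi_\f$ is a core for $T^{1/2}_{\f_{hg}/\f_g}$ by Theorem \ref{main}, so the product is defined on $D$.
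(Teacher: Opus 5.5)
Your proposal is correct and follows the paper's own proof. As in the paper, Proposition \ref{ortiiq} gives the chain $\f_{hg}\sim\f_g\sim\f$; Theorem \ref{notrans} then gives closability and $T_{hg}=A_o^*\overline{A_o}$ for $T^{1/2}_{\f_{hg}/\f_g}T^{1/2}_{g}\lceil_D$; Theorem \ref{multrac}(ii) identifies $T^{1/2}_{\f_{hg}/\f_g}=U_gT_h^{1/2}U_g^*$; and \eqref{aamu} supplies the second expression, with your extra checks on domains and on the convention $\a_h\a_g=\a_{hg}$ agreeing with the paper's identifications.
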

\begin{proof}
First note that the equality in \eqref{relautz} directly follows by (i) in Theorem \ref{multrac}.

Now, from Proposition \ref{ortiiq} we know that all states in the orbit $\co_\f$ under the action of $G$ are mutually equivalent. Therefore, for the computation of 
$$
T_{hg}\equiv T_{\f\circ\a_{hg}/\f}=T_{(\f\circ\a_{h})\circ\a_g/\f}\,,
$$
we can apply Theorem \ref{notrans} to the sequence of equivalent states
$$
\f\circ\a_{hg}\equiv(\f\circ\a_{h})\circ\a_g,\,\,\f\circ\a_g,\,\,\f
$$
concluding that the operator
$$
A_o:=T^{1/2}_{\f\circ\a_{hg}/\f\circ\a_g}T^{1/2}_{\f\circ\a_{g}/\f}\lceil_{D}
$$ 
is closable, and
$T_{hg}=(A_o)^*\overline{A_o}$. The assertion now follows by using (ii) in Theorem \ref{multrac}.

The proof ends by uniqueness of the Radon-Nikodym derivative.
\end{proof}
\begin{rem}
\label{lenire0}
Notice that the unitary map 
$$
G\ni g\mapsto U_g\equiv U_{\a_g}\in\cu(\ch_\f)
$$ 
in Theorem \ref{twogns}, merely provides a unitary implementation of $\a_{g^{-1}}$ but it is not a representation of 
the opposite group $G^{\rm op}$. 
\end{rem}
Indeed, for $a\in\ga$ and $g,h\in G$, we get
\begin{align*}
U^*_hU^*_g\pi_\f(a)U_gU_h=U^*_h\pi_\f(\a_g(a))U_h=\pi_\f(\a_h\a_g(a))\\
=\pi_\f(\a_{hg}(a))
=U^*_{hg}\pi_\f(\a_{hg}(a))U_{hg}\,.
\end{align*}
To being a representation of $G^{\rm op}$, one has to check that $U_gU_h\xi_\f=T_{hg}^{1/2}\xi_\f$. But by taking account (ii) in Theorem \ref{multrac} and the above theorem, we compute
\begin{align*}
U_gU_h\xi_\f=&U_g T_h^{1/2}\xi_\f= U_gT_h^{1/2}U_g^*U_g\xi_\f\\
=&U_gT_h^{1/2}U_g^*T^{1/2}_g\xi_\f=VT^{1/2}_{hg}\xi_\f\,,
\end{align*}
where $V$ is the polar part of the closure of $A_o$ above. Therefore, $U_gU_h\xi_\f$ is, in general, different from $T_{hg}^{1/2}\xi_\f$

\bigskip

Theorem \ref{lenire} immediately allows to compute some relevant Radon-Nikodym derivatives. Indeed,
define
$$
B_o:=U_g T_h^{1/2}U_g^*T_g^{1/2}T_h^{-1/2}\lceil_{T_h^{1/2}D}\,.
$$
\begin{rem}
For the following Radon-Nikodym derivatives, we have:
\begin{itemize}
\item[(a)] $T_{\f_{hg}/\f_{g}}=U_gT_hU_g^*$;
\item[(b)] $B_o$ is closable and $T_{\f_{hg}/\f_{h}}=(B_o^*)\overline{B_o}$\,.
\end{itemize}
\end{rem}
\begin{proof}
(a) It is nothing else than (ii) in Theorem \ref{multrac}.

(b) As before, $\xi_{\f_h}=T_h^{1/2}\xi_\f$, and then note that $B_o\big(T_h^{1/2}D\big)=A_o D$ for $A_o$ in Theorem \ref{lenire}. 
Now, first note that, with $B:=\overline{B_o}$, as in Theorem \ref{main}, $B^*B$ is affiliated to $\pi_\f(\ga)'$. Hence 
$(B^*B)^{1/2}=|B|\,\eta\,\pi_\f(\ga)'$. Take now the polar decomposition of $B=V|B|$ and notice that, necessarily, $V$ is a unitary by cyclicity. Now, for each $a\in\ga$, we compute
\begin{align*}
\langle\pi_\f(a)|B|\xi_{\f_h},|B|\xi_{\f_h}\rangle=&\langle|B|\pi_\f(a)\xi_{\f_h},|B|\xi_{\f_h}\rangle\\
=&\langle V|B|\pi_\f(a)\xi_{\f_h},V|B|\xi_{\f_h}\rangle\\
=&\langle B\pi_\f(a)\xi_{\f_h},B\xi_{\f_h}\rangle\\
=&\langle B_o\pi_\f(a)\xi_{\f_h},B_o\xi_{\f_h}\rangle\\
=&\langle B_o\pi_\f(a)T_h^{1/2}\xi_{\f},B_oT_h^{1/2}\xi_{\f}\rangle\\
=&\langle B_oT_h^{1/2}\pi_\f(a)\xi_{\f},B_oT_h^{1/2}\xi_{\f}\rangle\\
=&\langle A_o\pi_\f(a)\xi_{\f},A_o\xi_{\f}\rangle\\
=&\langle \pi_\f(\a_{hg}(a))\xi_\f,\xi_\f\rangle\\
=&\f(\a_{hg}(a))\,.
\end{align*}
The proof ends by uniqueness of the Radon-Nikodym derivative.
\end{proof}

In the bounded case, the formula for the chain rule leads to
\begin{equation*}
T_{hg}=T_g^{1/2}U_g T_h U_g^{*}T_g^{1/2}=U_gT_{g^{-1}}^{-1/2}T_h T_{g^{-1}}^{-1/2}U_g^{*}\,.
\end{equation*}

\section{An unbounded implementation}
\label{lenire00}

As usual, we fix a state $\f\in\cs^G(\ga)$. Since $\f$ is, in general, not invariant, we can provide an implementation of the action
of $G$ via unbounded closed operators, in addition to that described in Section \ref{inupnkz}.
By this implementation, the Radon-Nikodym derivatives satisfy a nice cocycle relation. 
If the state under consideration is invariant, both implementations coincide (and the Radon-Nikodym derivatives are obviously trivial).
\begin{thm}
\label{ohfmel}
Let $\f\in\cs^G(\ga)$ and, for each $g\in G$, define on $D=\pi_\f(\ga)\xi_\f$,
$$
V^{(o)}_g:\, \pi_\f(a)\xi_\f\mapsto \pi_\f(\a_g(a))\xi_\f,\quad a\in \ga\,.
$$
The operator $V^{(o)}_g$ is well defined and closable. 

Denoting by $V_g:=\overline{V^{(o)}_g}$, for each $g\in G$ we have:
\begin{itemize}
\item[(i)] $V_g^*V_g=T_g$,
\item[(ii)] $V_g$ is invertible and $V_g^{-1}=V_{g^{-1}}$.
\item[(iii)] the polar decomposition of $V_g$ is $V_g=U^*_g T_g^{1/2}$.
\end{itemize}
As an immediate consequence, we get that the operator 
$T_h^{1/2}V^{(o)}_g=T_h^{1/2}V_g\lceil_D$ is closable, and we get
$$
\left(T_h^{1/2}V_g\lceil_D\right)^*\overline{T_h^{1/2}V_g\lceil_D}=T_{hg}\,.
$$
\end{thm}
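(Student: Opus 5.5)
The plan is to reduce everything to the defining identity $\|V^{(o)}_g\pi_\f(a)\xi_\f\|^2=\f_g(a^*a)$, which holds for every $a\in\ga$, and then invoke Theorem \ref{main}, Theorem \ref{invinv} and Theorem \ref{twogns}.

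\emph{Well-definedness and closability.} If $\pi_\f(a)\xi_\f=0$, then $\f(a^*a)=0$. Since $\f_g\prec\f$, the stationary sequence argument from the proof of Theorem \ref{main} gives $\f_g(a^*a)=0$, that is $\pi_\f(\a_g(a))\xi_\f=0$. Now take a sequence with $\pi_\f(a_n)\xi_\f\to0$ and $V^{(o)}_g\pi_\f(a_n)\xi_\f$ Cauchy. This is exactly an $(\f,\f_g)$-sequence, so $\f_g(a_n^*a_n)\to0$, and that is closability. Equivalently, $V^{(o)}_g=W\,R_o$ where $R_o:\pi_\f(a)\xi_\f\mapsto\pi_{\f_g}(a)\xi_{\f_g}$ is the operator of Theorem \ref{main} for $\om=\f_g$, and $W$ is the unitary intertwining the two GNS triples $(P_{\f_g/\f}\pi_\f,P_{\f_g/\f}\ch_\f,T_g^{1/2}\xi_\f)$ and $(\pi_\f\circ\a_g,\ch_\f,\xi_\f)$. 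By Theorem \ref{twogns} (where $P_{\f_g/\f}=I$ by quasi-invariance), $W=U_g^*$, since $U_g\xi_\f=T_g^{1/2}\xi_\f$ and $U_g\pi_\f(\a_g(a))=\pi_\f(a)U_g$. Hence $V^{(o)}_g=U_g^*R_o$ on $D$.

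\emph{Parts (i) and (iii).} Closing both sides gives $V_g=U_g^*\overline{R_o}$. In the proof of Theorem \ref{main} we have $\overline{R_o}=V\,T_g^{1/2}$, with $V$ the polar part. Using the identification of Proposition \ref{pi}, that $V$ is the identity on $\overline{{\rm Ran}}\,T_g^{1/2}=\ch_\f$, because $T_g$ is injective by Theorem \ref{invinv}. Thus $V_g=U_g^*T_g^{1/2}$, which is (iii), and (i) follows since $U_g$ is unitary. To double-check without identifications, one can compute $\|V_g\pi_\f(a)\xi_\f\|^2=\f_g(a^*a)=\|T_g^{1/2}\pi_\f(a)\xi_\f\|^2$ on the common core $D$ and conclude as in the uniqueness part of Theorem \ref{main}.

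\emph{Part (ii).} On $D$ we have $V^{(o)}_{g^{-1}}V^{(o)}_g=I$ and $V^{(o)}_gV^{(o)}_{g^{-1}}=I$, since $\a_{g^{-1}}\a_g=\id$. So $V_g$ is injective with dense range $D$, and $V_g^{-1}\supset V^{(o)}_{g^{-1}}$. Since $V_g^{-1}$ is closed, $V_g^{-1}\supset V_{g^{-1}}$. The main obstacle is the reverse inclusion, because the closure of a restriction of an inverse need not be the full inverse. I would argue that $D=V_g(D)$ is a core for $V_g^{-1}$: the graph of $V_g^{-1}$ is the flipped graph of $V_g$, and $D$ is a core for $V_g$, so the flipped graph of $V_g\lceil_D$, which is the graph of $V^{(o)}_{g^{-1}}$, is dense in the graph of $V_g^{-1}$. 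Hence $V_g^{-1}=\overline{V^{(o)}_{g^{-1}}}=V_{g^{-1}}$.

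\emph{The final consequence.} For $a\in\ga$,
\[
\|T_h^{1/2}V^{(o)}_g\pi_\f(a)\xi_\f\|^2=\|T_h^{1/2}\pi_\f(\a_g(a))\xi_\f\|^2=\f_h(\a_g(a^*a))=\f_{hg}(a^*a).
\]
Here we use that $V^{(o)}_g$ maps $D$ into $D$, which lies in the domain of $T_h^{1/2}$. So $T_h^{1/2}V^{(o)}_g$ plays the role of $R_o$ for the pair $\f_{hg}\prec\f$, up to a partial isometry. It is therefore closable by the $(\f,\f_{hg})$-sequence argument, exactly as in Theorem \ref{notrans}, and $|\overline{\cdot}|^2$ commutes with $\pi_\f(\cu(\ga))$. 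Uniqueness in Theorem \ref{main} then yields $\bigl(T_h^{1/2}V_g\lceil_D\bigr)^*\overline{T_h^{1/2}V_g\lceil_D}=T_{hg}$.
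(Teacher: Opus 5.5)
Your proposal is correct. For well-definedness, closability and (i)--(iii) it is essentially the paper's argument. The paper also reads $V^{(o)}_g$ as the operator $R_o$ of Theorem \ref{main} for $\om=\f_g$. It obtains (iii) from $V_g\pi_\f(a)\xi_\f=U_g^*\pi_\f(a)U_g\xi_\f=U_g^*T_g^{1/2}\pi_\f(a)\xi_\f$ on the core $D$. It obtains (ii) from the common core $D$, the identity $(V_g\lceil_D)^{-1}=V_{g^{-1}}\lceil_D$, and Theorem \ref{invinv}.

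The genuine difference is in the final assertion. The paper deduces it from the chain rule, Theorem \ref{lenire}, which itself rests on Proposition \ref{ortiiq}, the transport formula in Theorem \ref{multrac}(ii), and Theorem \ref{notrans}. It then only observes, via (iii), that $U_gT_h^{1/2}U_g^*T_g^{1/2}\lceil_D=U_g\,T_h^{1/2}V^{(o)}_g$ with $U_g$ unitary. You instead argue directly from $\|T_h^{1/2}V^{(o)}_g\pi_\f(a)\xi_\f\|^2=\f_{hg}(a^*a)$, which uses only $\f_{hg}\prec\f$. Your route is self-contained. Read back through (iii), it also gives an independent and shorter proof of the closability of $A_o$ and of $T_{hg}=A_o^*\overline{A_o}$ in Theorem \ref{lenire}. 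The paper's route buys brevity given the machinery already in place, and ties the statement explicitly to the chain rule.

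Your last step can be streamlined. Since $\|T_h^{1/2}V^{(o)}_g\xi\|=\|T_{hg}^{1/2}\xi\|$ for $\xi\in D$, and $D$ is a core for $T_{hg}^{1/2}$, the operator $T_h^{1/2}V^{(o)}_g$ is closable. Its closure $C$ then satisfies $C^*C=T_{hg}$ at once; this equality of norms on a common core is all the uniqueness part of Theorem \ref{main} uses, so no affiliation check is needed. If you do make that check, note that the intertwining is twisted rather than the one in Theorem \ref{notrans}:
$$
T_h^{1/2}V^{(o)}_g\,\pi_\f(u)=\pi_\f(\a_g(u))\,T_h^{1/2}V^{(o)}_g \quad\text{on } D .
$$
It still yields $\pi_\f(u)^*C^*C\,\pi_\f(u)=C^*C$.

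One small point in (ii): injectivity of $V_g$ does not follow from $V^{(o)}_{g^{-1}}V^{(o)}_g=I$ on $D$ alone, since the closure of an injective operator need not be injective. You can get it in either of two ways:
\begin{itemize}
\item from (iii) together with the injectivity of $T_g$;
\item from your flip argument, once you note that the flipped graph of $V_g$ is the closure of the graph of $V^{(o)}_{g^{-1}}$, which is a graph precisely because $V^{(o)}_{g^{-1}}$ is closable.
\end{itemize}
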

\begin{proof}
The first part follows as follows. Reasoning as in Theorem \ref{main} (where, for $g\in G$ and $\om=\f_g$,
$R_o$ there corresponds to  $V^{(o)}_g$, we immediately conclude that, first $V^{(o)}_g$ is well defined and closable and then
that (i) holds true. 

(ii) directly follows by Theorem \ref{invinv} by taking into account that all the operators
$V_{g}$ have the common core
$$
D=\pi(\ga)\xi_\f=\pi(\a_h(\ga))\xi_\f\,,\quad h\in G\,,
$$
and $\left(V_g\lceil_D\right)^{-1}=V_{g^{-1}}\lceil_D$.

(iii) easily follows by the uniqueness of the polar decomposition of a densely defined closable operator, after computing on elements of the form $\pi_\f(a)\xi_\f$ in the core $D$,
\begin{align*}
V_g\pi_\f(a)\xi_\f=&\pi_\f(\a_g(a))\xi_\f=U^*_g\pi_\f(a)U_g\xi_\f\\
=&U^*_g\pi_\f(a)T^{1/2}_g\xi_\f
=U^*_gT^{1/2}_g\pi_\f(a)\xi_\f\,.
\end{align*}
For the last part, we already know that $A_o$ in Theorem \ref{lenire} is closable and $A_o^*\overline{A_o}=T_{hg}$. Now, by applying (iii), we get
\begin{align*}
T_{hg}=A_o^*\overline{A_o}=&\left(T_h^{1/2}U^*_g T_g^{1/2}\lceil_D\right)^*\overline{T_h^{1/2}U^*_g T_g^{1/2}\lceil_D}\\
=&\left(T_h^{1/2}V_g\lceil_D\right)^*\overline{T_h^{1/2}V_g\lceil_D}\,.
\end{align*}
\end{proof}
In bounded case, we get
\begin{eqnarray*}
T_{g}\left(V_{g}^{-1}T_{h}V_{g} \right)=T_{hg}=\left(V_{g}^*T_{h}(V_{g}^*)^{-1}\right)T_{g}\,.
\end{eqnarray*}
\begin{rem}
Notice that (i) and (iii) holds true in a more general case: let $\a\in{\rm aut}(\ga)$, $\f\in\cs(\ga)$ with $\f\circ\a\prec\f$. Consider 
the operators $U_\a$, $T_\a$ and $V_\a$, defined as in Theorem \ref{twogns} and Theorem \ref{ohfmel}. 

We then have that
$V^*_\a V_\a=T_\a$ and the polar decomposition of $V_\a$ is $U^*_\a T^{1/2}_\a$.
\end{rem}

\section{Examples}

Here, we collect some examples which contribuite to clarify the emerging situation in the case of dominance, equivalence, and quasi-invariance under the action of a group.

\subsection{Non transitivity}
\label{maov}

The examples for which transitivity does not hold may well exist. Indeed, it arise taking suggestion by explicit construction of closed operators whose product is not closable (e.g. \cite{Re}).

With $\mathbb N_+$ denoting the positive integers and $\ell^2\equiv \ell^2(\mathbb N_+)$ , we consider the type I factor $\cb(\ell^2)$ and its standard representation associated to the canonical tracial weight ${\rm Tr}$ (e.g. Section \ref{ptmgv}). In order to construct such an example, we consider positive functionals, disregarding the inessential normalisations.

We put
$$
\xi_{m,n}:=\d_{m,n}/n^3\,,\quad m,n\in\bn_+\,,
$$
and observe that  
$(\xi_{m,n})_{m,n\in\bn_+}=:\xi\in L^2(\cb(\ell^2))$.
We now follows the example in \cite{Re}. First consider $B$, whose matrix-elements are given by
$$
B_{m,n}:=n^2\d_{m,n}\,,\quad m,n\in\bn_+\,.
$$
By Proposition \ref{poseaf}, $\pi_{\rm R}(B)$ is a positive selfadjoint operator, in its own domain, affiliated to $\pi_{\rm L}(\cb(\ell^2))'$ and, in addition,
$$
(\pi_{\rm R}(B)\xi)_{m,n}=\d_{m,n}/n=(\pi_{\rm L}(B)\xi)_{m,n}
$$
(so that $\pi_{\rm R}(B)\xi$ comes from the diagonal embedding of $\ell^2$ in $L^2(\cb(\ell^2)$). Now, for the vector $v\in \ell^2$ with components $v_n=1/n$, consider the rank-one operator $A=\langle\,\,\,,v\rangle v$, which is bounded. As computed in \cite{Re}, 
$\pi_{\rm R}(A)\pi_{\rm R}(B)\equiv\pi_{\rm R}(BA)$
is not closable on
$$
\ell^2_o\subset\cf(\ell^2)\subset\pi_{\rm L}(\cb(\ell^2))\xi\equiv\pi_{\upsilon_\xi}(\cb(\ell^2))\xi_{\upsilon_\xi}
$$
(here, $\ell^2_o$ and $\cf(\ell^2)$ stands for finitely supported sequences and finite rank operators acting on $\ell^2$, respectively, and the second inclusion means the diagonal embedding).
This show that, with $\eta,\z\in L^2(\cb(\ell^2))$ given by 
$$
\eta:=\pi_R(B)\xi\,,\quad \z:=\pi_R(A)\pi_R(B)\xi\,,
$$
we have, by construction, $\f_{\eta}\prec\f_{\xi}$ (indeed $\f_{\eta}\sim\f_{\xi}$ because $\pi_R(B)$ is injective) and 
$\f_{\z}\prec\f_{\eta}$, but not $\f_{\z}\prec\f_{\xi}$. By replacing $A$ with the injective operator $A+V$ in \cite{Re},
it is also possible to construct a situation for which $\f_{\eta}\sim\f_{\xi}$ and 
$\f_{\z}\sim\f_{\eta}$, but not $\f_{\z}\sim\f_{\xi}$.

\subsection{An example of a quasi-invariant state} 

Consider the $C^*$-algebra $\ga=\mathcal B(\ell^2(\mathbb Z))$. Let $(e_n)_{n\in\mathbb Z}$ be the canonical orthonormal basis of $\ell^2(\mathbb Z)$, together with the canonical matrix-units system $(e_{mn})_{m,n\in\mathbb Z}$. 

On $\ell^2(\mathbb Z)$, define a multiplication operator $\r$, together with its 
diagonal density matrix, denoted also by $\r$, as
\begin{equation}
\label{ohrs}
\rho=\sum_{n\in\mathbb Z}\mu_ne_{nn}\,,\quad
\mu_n=\frac{1}{Z}e^{- n^2},\,\, Z=\sum_{n\in\mathbb Z}e^{-n^2}\,,
\end{equation}
together with the corresponding state 
$$
\f(a)=\tr(\rho a)\,,\quad a\in\ga\,.
$$
Notice that, on the standard representation of $\ga$ (e.g. Section \ref{ptmgv}), the normal state $\f$ is provided by the vector state
$\upsilon_{\r^{1/2}}$.

On $\ell^2(\mathbb Z)$, consider the shift operator $Ve_n=e_{n+1}$, together with its adjoint action on $\ga$. In such a way, we have an action $\bz\stackrel{g_k}{\curvearrowright}\ga$, where
\begin{equation}
\label{ohrs1}
g_k(a):={\rm ad}_{V^k}(a)=V^{k}aV^{-k}\,,\quad k\in\mathbb Z,\,\,a\in\ga\,. 
\end{equation}
The transpose action $g_k^{\rm t}$ on density-matrices, defined as $g_k^{\rm t}(\r):=\r\circ g_k$, is given by ${\rm ad}_{V^{-k}}$ which, for $\r$ in \eqref{ohrs}, leads to
\begin{equation}
\label{dual}
g_k^{\rm t}(\r)=
V^{-k}\rho V^k=\sum_{n\in\mathbb Z}\mu_{n+k}e_{nn}\,.
\end{equation}

Put
\begin{equation}
\label{hk}
h_k=\rho^{-1/2}(V^*)^k\rho V^k\rho^{-1/2} 
\end{equation}
Then, one has
\begin{equation*}
h_ke_n=h_k(n)e_n\,,\quad h_k(n)={\rm exp}(-2 kn- k^2)\,,
\end{equation*}
which is an unbounded operator for $k\neq 0$. 

Let $T^{1/2}_{g_k}$ be the right multiplication by $h_k^{1/2}$:
$T^{1/2}_{g_k}x=xh_k^{1/2}$ in its own domain. It is a positive, selfadjoint, invertible operator affiliated to $\pi_{\rm L}(\ga)'$, see Section \ref{ptmgv}. In particular,
\begin{equation}
\label{norm1}
T^{1/2}_{g_k}\xi_\f=\rho^{1/2}h_k^{1/2}=																\sum_{n\in\mathbb Z}\mu_{n+k}^{1/2}e_{nn}\,.
\end{equation}
Notice that
\begin{eqnarray*}
||T^{1/2}_{g_k}\xi_\f||^2=\sum_{n\in\mathbb Z}\mu_{n+k}=\sum_{n\in\mathbb Z}\mu_{n}=1\,,
\end{eqnarray*}
and thus $\upsilon_{T^{1/2}_{g_k}\xi_\f}$ is a state for each $k\in\bz$. Since $\r$ is invertible, $\f$ is faithful and thus
$\pi_\f(\ga)'=\pi_{\rm L}(\ga)'$ and, consequently, we deduce $T^{1/2}_{g_k}\,\eta\,\pi_\f(\ga)'$ for each $k\in\bz$.
We also note that, by \eqref{hk},
\begin{equation}
\label{rhok}
V^{-k}\rho V^k=\rho^{1/2}h_k\rho^{1/2}=(\rho^{1/2}h_k^{1/2})(h_k^{1/2}\rho^{1/2})\,.
\end{equation}
The properties of the state $\f$ are summarised in the following
\begin{prop}
For the state $\f$ on $\cb(\ell^2(\bz))$ with density $\r$ given in \eqref{ohrs}, the following properties hold true:
\begin{itemize}
\item[(i)] $\f$ is quasi invariant under the action of $\bz$ given in \eqref{ohrs1};
\item[(ii)] for the unitary implementation $U$ in Theorem \ref{twogns}, we have
\begin{eqnarray*}
U_{g_k}x=V^{-k}xV^k\equiv\pi_{\rm L}(V)^{-k}\pi_{\rm R}(V)^{k}x\,,\quad x\in\mathcal H_\f,\,\,k\in\mathbb Z;
\end{eqnarray*}
\item[(iii)] $U_{g_k}T_{g_l}^{1/2}U_{g_k}^*T_{g_k}^{1/2}$ is essentially selfadjoint on 
$\cd:=\pi_\f(\ga)\xi_\f=\pi_{\rm L}(\ga)\r^{1/2}=\ga\r^{1/2}$, whose closure is $T_{g_{k+l}}^{1/2}$.
\end{itemize}
As a consequence, 
\begin{itemize}
\item[(1)] the chain-rule involving the quasi-invariance of $\f$ (cf. Theorem \ref{lenire}) leads to
$$
T_{g_{k+l}}^{1/2}=U_{g_k}T_{g_l}^{1/2}U_{g_k}^*T_{g_k}^{1/2}
=U_{g_l}T_{g_k}^{1/2}U_{g_l}^*T_{g_l}^{1/2}\,,\quad k,l\in\bz\,;
$$
\item[(2)] $\bz\ni k\mapsto U_{g_k}\in\cu\big(L^2(\cb(\ell^2(\bz)))\big)$ provides a representation of $\bz$.
\end{itemize}
\end{prop}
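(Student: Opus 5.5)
Everything will be computed in the standard representation $\ch_\f=L^2(\cb(\ell^2(\bz)))$, with $\pi_\f=\pi_{\rm L}$ and $\xi_\f=\r^{1/2}$. Since $\r$ is injective, $\f$ is faithful, so $\xi_\f$ is cyclic and separating. Every operator involved is diagonal in the basis $e_n$ or a power of the shift, so the plan reduces each claim to an identity among diagonal matrices, followed by uniqueness in Theorem \ref{main}.

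\emph{Step (i).} Fix $k$. First check that the positive selfadjoint operator $T^{1/2}_{g_k}$ (right multiplication by $h_k^{1/2}$) represents $\f\circ g_k$. For $a\in\ga$, \eqref{rhok} gives
$$
\langle\pi_{\rm L}(a)\r^{1/2}h_k^{1/2},\r^{1/2}h_k^{1/2}\rangle=\tr\big(h_k^{1/2}\r^{1/2}a\r^{1/2}h_k^{1/2}\big)=\tr(V^{-k}\r V^k a)=\tr(\r V^kaV^{-k})=\f(g_k(a)).
$$
Cyclicity of the trace is legitimate here because $\r^{1/2}h_k^{1/2}$ is Hilbert--Schmidt by \eqref{norm1}. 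Next show that $\ga\r^{1/2}$ is a core for $T^{1/2}_{g_k}$. A vector $x$ in its domain is approximated by $xP_N$, where $P_N$ is the projection onto ${\rm span}\{e_n:|n|\le N\}$; then $xP_N=(xP_N\r^{-1/2})\r^{1/2}$ with $xP_N\r^{-1/2}$ bounded, so $xP_N\in\ga\r^{1/2}$. Since $P_N$ commutes with $h_k^{1/2}$, we have $xP_Nh_k^{1/2}=xh_k^{1/2}P_N\to xh_k^{1/2}$. Theorem \ref{main} then gives $\f\circ g_k\prec\f$ with $T_{g_k}=T_{\f_{g_k}/\f}$, and since $T_{g_k}$ is injective, Theorem \ref{invinv} yields $\f\circ g_k\sim\f$.

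\emph{Step (ii).} The operator $W_k:x\mapsto V^{-k}xV^k$ is unitary on $L^2$. It satisfies $W_k\xi_\f=V^{-k}\r^{1/2}V^k=\r^{1/2}h_k^{1/2}=T^{1/2}_{g_k}\xi_\f$, where the middle equality follows from \eqref{dual} and \eqref{norm1} since both sides are diagonal with entries $\mu_{n+k}^{1/2}$. It also satisfies $W_k^*\pi_{\rm L}(a)W_k=\pi_{\rm L}(V^kaV^{-k})=\pi_{\rm L}(g_k(a))$. By the uniqueness in Theorem \ref{twogns}, $U_{g_k}=W_k$.

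\emph{Step (iii).} This is the main computation. On $x\in\cd$, $U_{g_k}T^{1/2}_{g_l}U^*_{g_k}T^{1/2}_{g_k}x=x\,h_k^{1/2}\,V^{-k}h_l^{1/2}V^k$. The operator $V^{-k}h_lV^k$ is diagonal with entries $h_l(n+k)$, and $h_k(n)h_l(n+k)=e^{-2kn-k^2-2l(n+k)-l^2}=h_{k+l}(n)$. Hence the operator is right multiplication by $h_{k+l}^{1/2}$ restricted to $\cd$, i.e. $T^{1/2}_{g_{k+l}}\lceil_\cd$. This restriction is essentially selfadjoint because $\cd$ is a core, by the argument of Step (i), and its closure is $T^{1/2}_{g_{k+l}}$. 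The only delicate point is the domain: one must check that $\cd$ lies in the domain of the composite. This holds because $xh_k^{1/2}\in\ga\r^{1/2}h_k^{1/2}$ is mapped into $L^2$ by the further diagonal right multiplications, given the Gaussian decay of $\r^{1/2}$ against the exponential growth of $h$.

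\emph{Consequences.} Consequence (1) follows from Step (iii) and the symmetry $k\leftrightarrow l$. Alternatively, it follows from Theorem \ref{lenire}: the closure $A$ of $A_o$ is already positive selfadjoint, so $A^*A=A^2$ and $T^{1/2}_{g_{k+l}}=A$. Consequence (2) follows from the explicit formula in Step (ii), since $W_kW_l=W_{k+l}$ is immediate. Equivalently, Remark \ref{lenire0} shows that the polar part $V$ there is trivial.
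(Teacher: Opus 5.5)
Your proposal is correct and follows essentially the same route as the paper: the trace computation plus Theorems \ref{main} and \ref{invinv} for (i), verification of the two defining properties of $U_{g_k}$ in Theorem \ref{twogns} for (ii), and an explicit diagonal computation on $\cd$ for (iii), where your identity $h_k(n)h_l(n+k)=h_{k+l}(n)$ is the paper's index shift $\mu_{n+l}^{1/2}\mapsto\mu_{n+k+l}^{1/2}$. Your $xP_N$ argument that $\ga\r^{1/2}$ is a core for the right multiplications by $h_k^{1/2}$ supplies a step the paper uses without proof. That core property is needed both for the uniqueness in Theorem \ref{main} and for the essential selfadjointness in (iii).
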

\begin{proof}
(i) For $a\in\ga$, using repeatedly \eqref{rhok} we get
\begin{align*}
\f\circ g_k(a)=&\langle \pi_\f(g_k(a))\xi_\f,\xi_\f\rangle
              =\langle g_k(a)\rho^{1/2},\rho^{1/2}\rangle\\
							=&\tr(\rho(V^{k}aV^{-k}))
							=\tr((V^{-k}\rho V^{k})a)\\
							=&\tr((a\rho^{1/2}h_k^{1/2})(h_k^{1/2}\rho^{1/2}))
							=\tr((h_k^{1/2}\rho^{1/2})(a\rho^{1/2}h_k^{1/2}))\\
							=&\langle a\rho^{1/2}h_k^{1/2},\rho^{1/2}h_k^{1/2}\rangle
							=\langle \pi_\f(a)T^{1/2}_{g_k}\xi_\f,T^{1/2}_{g_k}\xi_\f\rangle\,.
\end{align*}
This proves that $\f\circ g_k\prec \f$ by Theorem \ref{main}. Since $T_{g_k}$ is injective, by Theorem \ref{invinv}, $\f\circ g_k\sim\f$ and $\f$ is quasi-invariant under the action of $\bz$ given in \eqref{ohrs1}.

(ii) For $x\in\cb(\ell^2(\mathbb Z))\equiv\ga$ and $y\in L^2(\cb(\ell^2(\mathbb Z)))\equiv\ch_\f$, we get
$$
\big(U_{g_k}^*xU_{g_k}\big)y=V^k(x(V^{-k}yV^k))V^{-k}=(V^k xV^{-k})y=g_k(x)y\,.
$$
By taking into account of \eqref{norm1} and \eqref{dual}, we have
$$
T^{1/2}_{g_k}\xi_\f=\rho^{1/2}h_k^{1/2}=V^{-k}\rho^{1/2} V^k=U_{g_k}\xi_\f\,,
$$
and thus the conditions in Theorem \ref{twogns} are satisfied.

(iii) Put $A:=U_{g_k}T_{g_l}^{1/2}U_{g_k}^*T_{g_k}^{1/2}$. Taking into account \eqref{norm1},
we compute
\begin{eqnarray*}
A\pi_\f(a)\xi_\f&=&U_{g_k}T^{1/2}_{g_l}U_{g_k}^*T_{g_k}^{1/2}\pi_\f(a)\xi_\f
                  =U_{g_k}V^ka(V^*)^k\rho^{1/2}h_l^{1/2}\\
									&=&(V^*)^{k}V^ka(V^*)^k\rho^{1/2}h_l^{1/2}V^k
									=a(V^*)^k\rho^{1/2}h_l^{1/2}V^k\\
									&=&a\sum_{n\in\mathbb Z}\mu_{n+l}^{1/2}\;(V^*)^ke_{nn}V^k
									=a\sum_{n\in\mathbb Z}\mu_{n+l}^{1/2}\;e_{(n-k)(n-k)}\\
									&=&a\sum_{n\in\mathbb Z}\mu_{n+l+k}^{1/2}\;e_{nn}
									=T^{1/2}_{g_{k+l}}\pi_\f(a)\xi_\f\,.
\end{eqnarray*}
Since $\pi_\f(a)\xi_\f$ is a core for $T^{1/2}_{g_{k+l}}$, it follows that 
$$
\overline{A\lceil_{\pi_\f(a)\xi_\f}}=\overline{T^{1/2}_{g_{k+l}}\lceil_{\pi_\f(a)\xi_\f}}=T^{1/2}_{g_{k+l}}
$$ 
and (iii) holds true.

To end the proof, notice that (1) and (2) are direct consequences of (i)-(iii).
\end{proof}

\subsection{On the orbit of quasi-invariant states.} 
\label{otof}

In the present section, we answer to the following question concerning the closure of the orbit $\co_\f$ and its closed convex hull, always in the weak${}^*$-topology. In order to understand what can happen, we reduce our analysis to the (perhaps known to some experts) commutative case.

The first example involves the $C^*$-algebra $\ga=C(\bt)$. For the group we consider all powers of a single rotation by an irrational angle. We descrive the framework, we put $\bt\sim[0,1)$ (after identifying $0$ with $1$) equipped with the sum-(mod 1). 

For $\th\in[0,1)$, we consider the map
$$
T_\th(x):=x+\th\,\,\text{(mod 1)}\,,\quad x\in[0,1)
$$ 
such that, on $C(\bt)\equiv\{f\in C[0,1]\,;\,f(0)=f(1)\}$, 
$$
\a(f)\equiv\a_\th(f):=f\circ T_\th\,,\quad f\in C(\bt)\,.
$$
It is clear that the group of integers $\bz$ is acting on $C(\bt)$ via all powers $\a^n$, $n\in\bz$.

For the state $\f$, represented by a Probability Radon measure $\m\equiv\m_g$, we take any such a measure 
equivalent to the Lebesgue-Haar measure $\l=\di x$: $\di\m_g=g\di x$ with density $g,g^{-1}\in L^1([0,1],\di x)$.
With a slight abuse of notation, we identify the orbit of the state with that of the underline measure.
\begin{prop}
For the $C^*$-dynamical system $(C(\bt),\a)$, and any measure $\m\sim\l$, $\overline{{\rm hull}(\co_\m)}$ contains an invariant measure equivalent to $\l$. If, in addition, $\th$ is irrational, then $\l\in\overline{{\rm hull}(\co_\m)}$.
\end{prop}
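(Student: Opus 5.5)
The plan is to average along the orbit. Write $\m=\m_g$ with $\di\m=g\,\di x$, and note that $\a^n$ acts on measures by $\m\mapsto\m\circ T_\th^{-n}$ (up to the convention of sign, which is irrelevant). The image of $\m$ under the $n$-th power therefore has density $g_n(x):=g(x-n\th)$ (mod 1). I will consider the Ces\`aro averages
$$
\m_N:=\frac1N\sum_{n=0}^{N-1}\m\circ\a^n\in{\rm hull}(\co_\m)\,,\qquad \di\m_N=G_N\,\di x\,,\quad G_N(x):=\frac1N\sum_{n=0}^{N-1}g(x-n\th)\,.
$$
These are measures with densities in $L^1(\bt,\di x)$.

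\emph{Rational $\th$.} If $\th=p/q$, the orbit is finite, since $\a^q=\id$. Then $\bar\m:=\frac1q\sum_{n=0}^{q-1}\m\circ\a^n$ already lies in ${\rm hull}(\co_\m)$ and is invariant. Its density $\bar g=\frac1q\sum_{n<q} g(\cdot-n/q)$ is positive a.e., because $g>0$ a.e. (as $g^{-1}\in L^1$). Hence $\bar\m\sim\l$. One can also check $\bar g^{-1}\le q\,g^{-1}\in L^1$, so the integrability conditions are preserved as well.

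\emph{Irrational $\th$.} The rotation is uniquely ergodic, and it is ergodic with respect to $\l$. I apply the mean (or Birkhoff) ergodic theorem in $L^1(\bt,\l)$ to the Koopman operator $f\mapsto f\circ T_\th^{-1}$, which is an isometry of $L^1$. This gives $G_N\to\int_0^1 g\,\di x=1$ in $L^1(\di x)$. Consequently, for every $f\in C(\bt)$,
$$
\Big|\int f\,\di\m_N-\int f\,\di x\Big|\le\|f\|_\infty\|G_N-1\|_{L^1}\to0\,,
$$
so $\m_N\to\l$ weak${}^*$ (even in norm of $C(\bt)^*$) and $\l\in\overline{{\rm hull}(\co_\m)}$. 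This also settles the first assertion in the irrational case, with $\l$ itself as the invariant measure equivalent to $\l$.

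\emph{Main step.} The only nontrivial point is the $L^1$-convergence of the averages $G_N$ for a merely integrable density $g$. I would justify it by density. For a trigonometric polynomial $p$, each character $e^{2\pi i kx}$ with $k\ne0$ averages to zero: $\frac1N\sum_n e^{-2\pi i kn\th}\to0$ because $k\th\notin\bz$. Hence the averages of $p$ converge uniformly to its mean. Now approximate $g$ in $L^1$ by trigonometric polynomials. Since the averaging operators are $L^1$-contractions (translation invariance of $\l$), an $\varepsilon/3$ argument yields $G_N\to1$ in $L^1$. Alternatively, one can cite von Neumann's mean ergodic theorem on $L^2$ and extend to $L^1$ by the same density argument. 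Weak${}^*$ closedness of the closed convex hull then gives the result.
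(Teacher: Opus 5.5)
Your proof is correct. The rational case is the paper's argument (finite periodic orbit, average over one period), and your checks $\bar g>0$ a.e.\ and $\bar g^{-1}\le q\,g^{-1}$ just make explicit what the paper calls trivial.

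In the irrational case you use a different key lemma. The paper invokes unique ergodicity of the irrational rotation, citing \cite{Fier}. This gives $\frac1N\sum_{n=0}^{N-1}\int (f\circ T^n_\th)\,\di\n\to\int_0^1 f\,\di x$ for every $f\in C(\bt)$ and \emph{every} Radon probability measure $\n$. You instead use $\m\ll\l$ to pass to densities. You then prove the $L^1$ mean ergodic theorem $G_N\to1$ in $L^1(\bt,\di x)$ from Weyl's estimate on characters plus $L^1$-density of trigonometric polynomials.

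Your route is self-contained and gives a stronger conclusion: $\m_N\to\l$ in the norm of $C(\bt)^*$. So $\l$ lies even in the norm closure of ${\rm hull}(\co_\m)$.

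The paper's route is shorter and does not use absolute continuity at all, so it also covers singular initial measures. It is also the formulation that carries over to uniquely ergodic noncommutative $C^*$-dynamical systems, which is the paper's natural setting.

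Incidentally, your character computation already proves the unique ergodicity the paper cites. You only need to replace $L^1$-density of trigonometric polynomials by their sup-norm density in $C(\bt)$.
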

\begin{proof}
The rational case is trivial: it directly follows as $\co_\m$ consists of finitely many points and the transposed action of $\bz$ on it is periodic.

If $\th$ is irrational, it is well known that $(C(\bt),\a)$ is uniquely ergodic with the unique invariant state given by $
\int_{[0,1]}(\,\,\,) \di x$. Therefore (e.g. \cite{Fier}), for each probability Radon measure $\n$ (and then, in particular, for each $\m\sim\l$ as above), 
$$
\lim_n\sum_{k=0}^{n-1}\int_{[0,1]} (f\circ T^k_\th)\di\n=\int_{[0,1]} f(x)\di x\,,\quad f\in C(\bt)\,.
$$
\end{proof}

The next example concerns the opposite situation by considering $C([0,1])$.
We also consider the homeomorphism $g:[0,1]\to[0,1]$ given by $g(x)=\sqrt{x}$. Notice that, first $g\in C^1((0,1))$. Second, it induces an action of the group $\bz$ made of all integers by considering all powers $g\circ g\circ\cdots\circ g$ and 
$g^{-1}\circ g^{-1}\circ\cdots\circ g^{-1}$. By the Gelfand Theorem, we have an action 
$$
\a_n(f):=f\circ g^n\,,\quad f\in C([0,1]),\,\, n\in\bz\,.
$$
As the reference state, we take $\f:=\int_{[0,1]}(\,\,\,) \di\l$, with
$\di\l$ the Lebesgue measure. We show that the orbit $\co_\f$ of the Lebesgue state under the action inherited by all powers of the homeomorphism $g$ contains cluster points, which are singular w.r.t. the reference state.
\begin{prop}
The Lebesgue state $\f$ is quasi invariant under the action of $\bz$ induced by (all powers of) the homeomorphism $g$, with Radon-Nikodym derivatives
$$
T_n(x)=2^n x^{2^n-1}\,,\quad n\in\bz\,,
$$
and we have
\begin{equation}
\label{stwa}
\lim_{n\to+\infty}(\f\circ g^n)(f)=f(1),\,\,\lim_{n\to-\infty}(\f\circ g^n)(f)=f(0)\,,\quad f\in C([0,1]).
\end{equation}
\end{prop}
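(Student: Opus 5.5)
We reduce everything to classical measure theory, using the commutative picture of Example \ref{exa1}. Concretely, for $\ga=C([0,1])$ and measures $\m,\n$ on $[0,1]$, the relation $\om_\m\prec\om_\n$ amounts to absolute continuity $\m\prec\n$. Moreover, the Radon--Nikodym derivative $T_{\om_\m/\om_\n}$ is then the multiplication operator on $L^2([0,1],\n)$ by $\frac{\di\m}{\di\n}$. This multiplication operator is affiliated with $\pi_\n(\ga)'=L^\infty(\n)$, and $C([0,1])$ is a core for its square root. By the uniqueness part of Theorem \ref{main}, it is therefore enough to compute the classical densities.

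The first step is to compute the image measure. For $f\in C([0,1])$ we have $(\f\circ\a_n)(f)=\int_0^1 f(g^n(x))\di x$, with $g^n(x)=x^{2^{-n}}$ for all $n\in\bz$. Substituting $y=x^{2^{-n}}$, i.e. $x=y^{2^n}$ and $\di x=2^n y^{2^n-1}\di y$, gives
$$
(\f\circ\a_n)(f)=\int_0^1 f(y)\,2^n y^{2^n-1}\di y\,.
$$
Hence $\f\circ\a_n$ is given by the density $T_n(y)=2^ny^{2^n-1}$ with respect to Lebesgue measure. This density lies in $L^1$ and is strictly positive on $(0,1)$, hence Lebesgue-a.e. positive. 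So $\f\circ\a_n\prec\f$, and $T_n$ is injective as a multiplication operator. By Theorem \ref{invinv} this gives $\f\circ\a_n\sim\f$ for every $n$, which is quasi-invariance. One detail needs checking: $C([0,1])$ must be a core for multiplication by $T_n^{1/2}$ on $L^2(\di x)$. This holds by a standard truncation/approximation argument; alternatively, it follows directly from the (i)$\Rightarrow$(ii) direction of Theorem \ref{main}, because strong absolute continuity holds classically by $2^o$ after Example \ref{exa2}.

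The second step is the limits in \eqref{stwa}. The measures $\m_n=T_n\di y$ are the laws of $Y=X^{2^{-n}}$ with $X$ uniform on $[0,1]$. As $n\to+\infty$, $X^{2^{-n}}\to1$ for every $X\in(0,1]$. As $n\to-\infty$, $X^{2^{|n|}}\to 0$ for every $X\in[0,1)$. Dominated convergence, using the bounded continuous $f$, then yields $\int f(x^{2^{-n}})\di x\to f(1)$, respectively $f(0)$.

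The limits $\d_1$ and $\d_0$ are singular with respect to $\l$. They also lie in the weak$^*$ closure of the orbit and are not quasi-invariant, which is the point of the example.

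The only mildly delicate step is the identification of the abstract Radon--Nikodym derivative with the classical density, in particular the core condition. I expect to handle it through the uniqueness clause of Theorem \ref{main}. The remaining steps are the explicit change of variables and dominated convergence.
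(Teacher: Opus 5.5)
Your proof is correct, and it departs from the paper's in two places.

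\textbf{The density.} The paper only says ``elementary change of variable''. You make the same substitution $x=y^{2^n}$, but you also check what the uniqueness clause of Theorem~\ref{main} actually requires. Multiplication by $T_n^{1/2}$ is affiliated with $\pi_\f(\ga)'=L^\infty([0,1],\di x)$, and it has $C([0,1])$ as a core, since continuous functions are dense in $L^2\big((1+T_n)\di x\big)$, a finite Borel measure. You then obtain the reverse dominance from $T_n>0$ a.e.\ via Theorem~\ref{invinv}. This matters for $n<0$, where $T_n$ is unbounded at $0$. Your alternative justification of the core property through (i)$\Rightarrow$(ii) of Theorem~\ref{main} also works, but it needs one more step. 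A positive selfadjoint operator affiliated with the maximal abelian algebra $L^\infty$ is multiplication by some measurable $t\geq0$. The representing formula together with Riesz uniqueness then forces $t=T_n$ a.e.

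\textbf{The limits.} The paper computes the moments $\int_0^1T_n(x)x^k\di x=2^n/(2^n+k)$ and extends to all $f$ by Stone--Weierstrass, using the uniform bound $\|\f\circ\a_n\|=1$. You instead read $\f\circ\a_n$ as the law of $X^{2^{-n}}$ with $X$ uniform, and apply dominated convergence. Your route needs no approximation step and works for any bounded Borel $f$ continuous at the endpoints. The paper's route gives explicit moment formulas.

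\textbf{A correction to your closing remark}, which lies outside the statement. Since $g(0)=0$ and $g(1)=1$, we have $\d_j\circ\a_n=\d_{g^n(j)}=\d_j$ for $j=0,1$. So the limit states are invariant, and hence quasi-invariant by Proposition~\ref{concon1}(i). What the example shows is that the weak$^*$ closure of $\co_\f$ leaves the equivalence class of $\f$, because the limits are singular with respect to $\f$. It does not show that the closure leaves $\cs^G(\ga)$.
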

\begin{proof}
The 1st part follows after performing an elementary change of variable (by taking into account that $g\in C^1((0,1))$.

The second half comes from a standard approximation procedure by invoking the Stone-Weierstrass Theorem. It is then enough to check \eqref{stwa} for monomials $f_k(x):=x^k$, $k\in\bn$. Indeed,
$$
(\f\circ g^n)(f_k)=\int_0^1 T_n(x)x^k\di x=\frac{2^n}{2^n+k}\,,\quad n\in\bz,\,\,k\in\bn\,.
$$
Now, 
$$
\lim_{n\to+\infty}(\f\circ g^n)(f_k)=1=f_k(1),\,\,k\in\bn\,.
$$
For the reverse limit, we put $m:=-n$ obtaining,
\begin{align*}
\lim_{n\to-\infty}(\f\circ g^n)(f_k)=&\lim_{m\to+\infty}(\f\circ g^{-m})(f_k)
=\lim_{m\to+\infty}\frac1{1+2^m k}\\
=&\d_{k,0}=f_k(0),\,\,k\in\bn\,.
\end{align*}
\end{proof}
\begin{rem}
By identifying $\bt$ with $[0,1)$ with the sum-${\rm mod}\,1$ as before, the last example provides a situation for which the weak${}^*$-closure of $\co_\f$ is its one-point compactification, since the points $0$ and $1$ are identified.
\end{rem}

\section*{Acknowledgements}
A. Dhahri is a member of GNAMPA-INdAM and he has been supported by the MUR grant Dipartimento
di Eccellenza 2023-2027 of Dipartimento di Matematica, Politecnico di Milano. F. Fidaleo acknowledges ``Excellence Department Project'' by Italian Government, CUP E83C23000330006; and ``Tor Vergata University of Rome funding OANGQS'', CUP E83C25000580005. He is also grateful to C. Remling for providing the example in \cite{Re}, and L. Zsid\'o for a useful discussion about Theorem \ref{zscsu}. The work by H. J. Yoo was supported by the National Research Foundation of Korea(NRF) grant funded by the Korea government(MSIT) (RS-2026-25488431).


\begin{thebibliography}{9999} 

\bibitem{BF} Barreto S. D., Fidaleo F. 
{\it On the structure of KMS states of disordered systems}, 
Commun. Math. Phys. {\bf 250} (2004), 1-21.

\bibitem{BR} Bratteli O., Robinson D. W. {\it Operator algebras and
quantum statistical mechanics I, II}, Springer, Berlin-Heidelberg-New
York, 1987 and 1997.

\bibitem{D} Derezi\'nski J. {\it Unbounded linear operators}. 

\bibitem{E} Emch G. G. {\it Algebraic methods in statistical mechanics and quantum field theory}, Wiley-Interscience, 1972

\bibitem{Fier} Fidaleo F. {\it On strong ergodic properties of quantum dynamical systems}, Infin. Dim. Anal. Quantum Probab. Relat. Top. {\bf 275} (2009), 551-564.

\bibitem{FS} Fidaleo F., Suriano L. {\it Type} III {\it representations and modular spectral triples for the noncommutative torus}, J. Funct. Anal. {\bf 275} (2018), 1484-1531. 

\bibitem{G} Gudder S. P. {\it A Randon-Nikodym Theorem for $*$-algebras}, Pacific J. Math. {\bf 80} (1979), 141-149. 

\bibitem{GK} Guichardet A., Kastler D.. {\it D\'esint\'egration des \'estats quasi-invariants des $C^*$-alg\`ebres}, J. Math. Pures Appl. {\bf 49} (1970), 349-380. 

\bibitem{H} Haagerup U. {\it The standard form of von Neumann algebras}, Math. Scand. {\bf 37} (1975), 271-283. 

\bibitem{H1} Haagerup U. {\it $L_p$-spaces associated with an arbitrary von Neumann algebra}, Colloq. Internat. CNRS, Vol. {\bf 274} (1979), 175-184. 

\bibitem{Hi} Hiai F. {\it Quantum f-divergence in von Neumann algebras. II}, J. Math. Phys. {\bf 60} (2019), art. n. 012203. 

\bibitem{Kr} Krieger W. {\it Type} III {\it On construction of non ${}^*$-isomorphic hyperfinite factors of} {\it Type} III, J. Funct. Anal. {\bf 6} (1970), 97-109. 

\bibitem{N} Naudts J. {\it A generalised entropy function}, Commun. Math. Phys. {\bf 37} (1974), 175-182. 

\bibitem{NSZ} Niculescu C. P., Str\"oh A., Zsid\'o L.
{\it Noncommutative estension of classical and multiple recurrence 
theorems}, J. Operator Theory {\bf 50} (2003), 3-52.

\bibitem{PT} Pedersen G. K., Takesaki M.
{\it The Radon-Nikodym theorem for von Neumann algebras}, Acta Math. {\bf 130} (1973), 53-88.

\bibitem{Pe} Perdrizet F.
{\it \'El\'ements positifs relatifs \`a une alg\`ebre hilbertienne\`a gauche}, Comp. Math. {\bf 23} (1971), 25-47.

\bibitem{Re} https://mathoverflow.net/questions/507863/closure-of-product-of-unbounded-selfadjoint-operators

\bibitem{S}  Str\v{a}til\v{a} \c{S}.  {\it Modular Theory in Operator Algebras}, Editura Academiei and Abacus Press. Tunbridge Wells, (1981).

\bibitem{SZ} Str\v{a}til\v{a}  \c{S}., Zsid\'o L.
{\it Lectures on von Neumann algebras}, Abacus press, Tunbridge
Wells, Kent, (1979).

\bibitem{T} Takesaki M. {\it Theory of operator algebras}, Springer, Berlin-Heidelberg-New
York, 1979.

\end{thebibliography}
\end{document}